\newtheorem{thm}{Theorem}[section]
\newtheorem{thm-defn}[thm]{Theorem/Definition}
\newtheorem{lem}[thm]{Lemma}
\newtheorem{prop}[thm]{Proposition}
\newtheorem{cor}[thm]{Corollary}
\theoremstyle{definition}
\newtheorem{defn}[thm]{Definition}
\theoremstyle{remark}
\newtheorem{rem}[thm]{Remark}
\numberwithin{equation}{section}
\begin{document}

\pagenumbering{arabic}

\title{Relative crystalline representations and $p$-divisible groups in the small ramification case}

\author{Tong Liu and Yong Suk Moon}
\date{}

\maketitle

\begin{abstract}
Let $k$ be a perfect field of characteristic $p > 2$, and let $K$ be a finite totally ramified extension over $W(k)[\frac{1}{p}]$	 of ramification degree $e$. Let $R_0$ be a relative base ring over $W(k)\langle t_1^{\pm 1}, \ldots, t_m^{\pm 1}\rangle$ satisfying some mild conditions, and let $R = R_0\otimes_{W(k)}\mathcal{O}_K$. We show that if $e < p-1$, then every crystalline representation of $\pi_1^{\text{\'et}}(\mathrm{Spec}R[\frac{1}{p}])$ with Hodge-Tate weights in $[0, 1]$ arises from a $p$-divisible group over $R$. 
\end{abstract}

\section{Introduction} \label{sec:1}

Let $k$ be a perfect field of characteristic $p > 2$, and let $W(k)$ be its ring of Witt vectors. Let $K$ be a finite totally ramified extension over $W(k)[\frac{1}{p}]$ with ramification degree $e$, and denote by $\mathcal{O}_K$ its ring of integers. If $G$ is a $p$-divisible group over $\mathcal{O}_K$, then it is well-known that its Tate module $T_p(G)$ is a crystalline $\mathrm{Gal}(\overline{K}/K)$-representation with Hodge-Tate weights in $[0, 1]$. Conversely, Kisin showed the following result in \cite{kisin-crystalline}.

\begin{thm} \label{thm:1.1} \emph{(cf. \cite[Corollary 2.2.6]{kisin-crystalline})}
Let $T$ be a crystalline $\mathrm{Gal}(\overline{K}/K)$-representation finite free over $\mathbf{Z}_p$ whose Hodge-Tate weights lie in $[0, 1]$. Then there exists a $p$-divisible group $G$ over $\mathcal{O}_K$ such that $T_p(G) \cong T$ as $\mathrm{Gal}(\overline{K}/K)$-representations.	
\end{thm}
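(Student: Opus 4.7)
The strategy is to factor $T$ through the intermediate category of Kisin modules (Breuil--Kisin modules) of height $\leq 1$ over $\mathfrak{S} := W(k)[[u]]$, and then invoke the classification of $p$-divisible groups over $\mathcal{O}_K$ by such modules. Fix a uniformizer $\pi \in \mathcal{O}_K$ with Eisenstein polynomial $E(u) \in W(k)[u]$; a Kisin module of height $\leq 1$ is a finite free $\mathfrak{S}$-module $\mathfrak{M}$ equipped with a semi-linear Frobenius $\varphi : \mathfrak{M} \to \mathfrak{M}$ such that the cokernel of $1 \otimes \varphi : \varphi^*\mathfrak{M} \to \mathfrak{M}$ is killed by $E(u)$. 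There is a natural functor $T_{\mathfrak{S}}$ from such modules to $\mathbf{Z}_p$-lattices equipped with a $\mathrm{Gal}(\overline{K}/K_\infty)$-action, where $K_\infty := K(\pi^{1/p^\infty})$.

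The first task is to attach to $T$ a Kisin module $\mathfrak{M}$ of height $\leq 1$ with $T_{\mathfrak{S}}(\mathfrak{M}) \cong T$ as $\mathrm{Gal}(\overline{K}/K_\infty)$-modules. Starting from the weakly admissible filtered $\varphi$-module $D := D_{\mathrm{cris}}(T[\tfrac{1}{p}])$, Kisin's machinery produces a $\varphi$-stable lattice $\mathcal{M}$ in a $\varphi$-module over the rigid analytic open unit disk (equivalently, the Robba ring), whose $\mathrm{coker}(1 \otimes \varphi)$ is killed by $E(u)$ because the Hodge--Tate weights lie in $[0,1]$. The integral lattice $T \subset T[\tfrac{1}{p}]$ then pins down a Frobenius-stable $\mathfrak{S}$-lattice $\mathfrak{M} \subset \mathcal{M}$ via the embedding of $\mathfrak{S}$ into Fontaine's period ring $W(R)$, and one verifies using slope arguments together with the height-$1$ hypothesis that $\mathfrak{M}$ is finite free of the correct rank and retains height $\leq 1$ integrally.

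Next I would invoke the Breuil--Kisin classification: the functor $G \mapsto \mathfrak{M}(G)$ from $p$-divisible groups over $\mathcal{O}_K$ to Kisin modules of height $\leq 1$ is an equivalence of categories, compatibly with Tate modules in the sense that $T_p(G) \cong T_{\mathfrak{S}}(\mathfrak{M}(G))$ as $\mathrm{Gal}(\overline{K}/K_\infty)$-modules. Applying a quasi-inverse to the $\mathfrak{M}$ built above yields a $p$-divisible group $G/\mathcal{O}_K$ with $T_p(G) \cong T$ as $\mathrm{Gal}(\overline{K}/K_\infty)$-modules. Promoting this to $\mathrm{Gal}(\overline{K}/K)$-equivariance uses that $T_p(G)$ and $T$ are both crystalline: functoriality of $D_{\mathrm{cris}}$ together with agreement on the dense subgroup $\mathrm{Gal}(\overline{K}/K_\infty)$ forces agreement on all of $\mathrm{Gal}(\overline{K}/K)$.

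The principal obstacle is twofold. First, descending Kisin's $\varphi$-module over the Robba ring to a finite free $\mathfrak{S}$-lattice of height $\leq 1$ requires nontrivial slope-filtration arguments (in the spirit of Kedlaya) together with essential use of the Hodge--Tate weight bound. Second, and more seriously, the equivalence between $p$-divisible groups over $\mathcal{O}_K$ and Kisin modules of height $\leq 1$ rests on a delicate comparison with Breuil's classification via filtered $S$-modules; establishing this comparison for $p > 2$, and checking that it respects Galois actions so that the Tate module is recovered, is the technically demanding step and is where the hypothesis $p > 2$ enters essentially.
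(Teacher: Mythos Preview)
The paper does not prove Theorem~1.1; it merely cites it as a result of Kisin (\cite[Corollary~2.2.6]{kisin-crystalline}) to motivate the relative question. So there is no proof in the paper to compare against. Your sketch is a reasonable outline of Kisin's original argument: produce from $D_{\mathrm{cris}}(T[\tfrac1p])$ a $\varphi$-module over the open disk, descend to an $\mathfrak{S}$-lattice of height $\leq 1$, and invoke the equivalence between $p$-divisible groups over $\mathcal{O}_K$ and height-$1$ Kisin modules.

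There is, however, one genuine error in your final step. You write that $\mathrm{Gal}(\overline{K}/K_\infty)$ is dense in $\mathrm{Gal}(\overline{K}/K)$ and conclude $G_K$-equivariance from $G_{K_\infty}$-equivariance by continuity. This is false: $G_{K_\infty}$ is a \emph{closed} subgroup of $G_K$ of infinite index, not a dense one. The correct argument is that the restriction functor from crystalline $G_K$-representations to $G_{K_\infty}$-representations is \emph{fully faithful} (this is one of Kisin's key lemmas, proved via the comparison with filtered $\varphi$-modules and the fact that $K_\infty/K$ is ``almost'' totally ramified in the relevant sense). Since both $T_p(G)$ and $T$ are crystalline $G_K$-representations and you have an isomorphism between them over $G_{K_\infty}$, full faithfulness upgrades this to a $G_K$-isomorphism. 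Without this input your last paragraph does not close the argument.
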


\noindent The goal of this paper is to study the analogous statement in the relative case.

Recently, relative $p$-adic Hodge theory generalizing the classical theory has been developed by Brinon \cite{brinon-relative}, Scholze \cite{scholze-p-adic-hodge}, Kedlaya-Liu \cite{kedlaya-liu-relative-padichodge}, and Diao-Lan-Liu-Zhu \cite{diao-lan-liu-zhu-logRH}. It aims to understand certain $p$-adic \'etale local systems, namely \textit{de Rham} local systems on a smooth (rigid analytic) variety $X$ over a $p$-adic field. As in the classical case, one can expect that studying the full subcategory of \textit{crystalline} local systems on $X$, when it is well-defined, would exhibit a lot of useful information.

Let $R_0$ be a base ring over $W(k)\langle t_1^{\pm 1}, \ldots, t_m^{\pm 1}\rangle$ given as in Section \ref{sec:2.1}, and let $R = R_0\otimes_{W(k)}\mathcal{O}_K$. In this paper, we will work in the setting $X = \mathrm{Spec} (R[\frac{1}{p}])$. For this case, the category of $p$-adic \'etale local systems on $X$ is equivalent to that of $\mathbf{Q}_p$-representations of the \'etale fundamental group $\mathcal{G}_R$ of $X$ as $X$ is connected. Moreover, for representations of $\mathcal{G}_R$, the condition being \textit{crystalline} is well-defined by \cite{brinon-relative}. If $G_R$ is a $p$-divisible group over $R$, its Tate module $T_p(G_R)$ is a crystalline $\mathcal{G}_R$-representation with Hodge-Tate weights in $[0, 1]$ (cf. \cite{kim-groupscheme-relative}). A natural question is whether the converse statement analogous to Theorem \ref{thm:1.1} will hold in the relative case. We prove that the answer is affirmative if the ramification index $e$ is small.

\begin{thm} \label{thm:1.2}
Suppose $e < p-1$. Let $T$ be a crystalline $\mathcal{G}_R$-representation finite free over $\mathbf{Z}_p$ whose Hodge-Tate weights lie in $[0, 1]$. Then there exists a $p$-divisible group $G_R$ over $R$ such that $T_p(G_R) \cong T$ as $\mathcal{G}_R$-representations.
\end{thm}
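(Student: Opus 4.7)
The plan is to carry out Kisin's strategy from \cite{kisin-crystalline} in the relative setting, factoring the correspondence through (relative) Kisin modules. First I would attach to $T$ a Kisin module $\mathfrak{M}$ over $\mathfrak{S}_R := R_0[[u]]$, where $u$ maps to a fixed uniformizer $\pi$ of $\mathcal{O}_K$ with Eisenstein polynomial $E(u)$. Using relative $p$-adic Hodge theory (\cite{brinon-relative}, \cite{kedlaya-liu-relative-padichodge}), $\mathfrak{M}$ should be finite free over $\mathfrak{S}_R$ with a Frobenius $\varphi_{\mathfrak{M}}$, and the hypothesis that the Hodge-Tate weights of $T$ lie in $[0,1]$ should force the cokernel of $\varphi^{*}\mathfrak{M} \to \mathfrak{M}$ to be killed by $E(u)$, i.e.\ $\mathfrak{M}$ has $E(u)$-height $\leq 1$.

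Next, I would base change $\mathfrak{M}$ to the relative divided power ring $S_R$ (the relative analogue of Breuil's ring), obtaining a filtered $\varphi$-module $\mathcal{M}$ of level $\leq 1$ together with a topologically quasi-nilpotent connection induced by the derivations $\partial/\partial t_i$ on $R_0$. The small ramification hypothesis $e < p-1$ should enter crucially here: it ensures that $E(u)$ and its divided powers remain well-behaved integrally, so that one stays within the regime where Breuil-type classifications of strongly divisible modules can be applied directly, bypassing the more delicate $\mathcal{O}$-module manipulations of \cite{kisin-crystalline}.

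I would then invoke a classification of $p$-divisible groups over $R$ in terms of such strongly divisible modules -- a relative analogue of Breuil's classification, which in this setting should follow from or refine the work of Kim \cite{kim-groupscheme-relative} in the small ramification case. This produces a $p$-divisible group $G_R$ over $R$ whose associated filtered $\varphi$-module with connection is $\mathcal{M}$, and tracking the various period-ring comparisons should yield $T_p(G_R) \cong T$ as $\mathcal{G}_R$-representations.

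The hard part will be establishing full compatibility between the two classifications in the relative setting. The derivations on $R_0$ interact subtly with the $u$-adic topology and the Frobenius on $\mathfrak{S}_R$, and showing that the connection lifts correctly from $\mathfrak{M}$ to $\mathcal{M}$, is topologically quasi-nilpotent, and is compatible with both the Frobenius and the filtration, is where $e < p-1$ is expected to supply the necessary denominator control. Verifying that all of this pieces together -- so that the Kisin module attached to $T$ genuinely produces an honest $p$-divisible group, rather than merely a crystal or a formal object -- is where I expect the bulk of the technical work to lie.
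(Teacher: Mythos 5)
Your overall strategy (Kisin modules $\to$ relative Breuil--Kisin classification $\to$ $p$-divisible group) is aligned with the paper's framework, but there is a genuine gap at the very first step, and it is precisely where the paper invests all its effort.

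You write ``I would attach to $T$ a Kisin module $\mathfrak{M}$ over $\mathfrak{S}_R$ \ldots\ Using relative $p$-adic Hodge theory, $\mathfrak{M}$ should be finite free over $\mathfrak{S}_R$ with the right height.'' In the classical setting over $\mathcal{O}_K$, Kisin's construction of $\mathfrak{M}$ from a crystalline $T$ is itself the hard theorem and relies on his delicate analysis of $\varphi$-modules over rigid-analytic rings; there is no known direct transfer of that argument to a general relative base $R_0$. The paper does \emph{not} attach $\mathfrak{M}$ to $T$ by any such functorial recipe. Instead, the construction is by intersection: it first invokes Brinon--Trihan over the complete discrete valuation ring $\mathcal{O}_L = \widehat{R_{0,(p)}}\otimes\mathcal{O}_K$ (residue field has finite $p$-basis) to obtain a $p$-divisible group $G_{\mathcal{O}_L}$ and Kisin module $\mathfrak{M}_{\mathcal{O}_L}$, then sets $\mathfrak{M}_n := \mathcal{M}_n \cap \mathfrak{M}_{\mathcal{O}_L,n}$ inside the étale $\varphi$-module base-changed to $\mathcal{O}_{\mathcal{E},\mathcal{O}_L}$, and only afterwards proves finiteness, projectivity, and height $\le 1$ for $\mathfrak{M}_n$ (Lemma \ref{lem:4.1}, Lemma \ref{lem:4.2}, Proposition \ref{prop:4.3}). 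Without such a descent-through-intersection device, the existence of a projective $\mathfrak{M}$ with the right properties is simply asserted, not proved.

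The role of $e < p-1$ in the paper is also different from what you suggest. You invoke it for ``denominator control'' in a Breuil-ring filtration argument, but the paper uses $e<p-1$ in two separate places and in neither of them in that way. First, in Proposition \ref{prop:4.3}, the bound $e<p-1$ is what kills the potential $u$-torsion of $\overline{\mathfrak{N}}_n$: the height-$\le 1$ structure would force $u^{(p-1)a}\mid u^{eb}$ on the torsion part, which is impossible when $e<p-1$; this is what makes $\mathfrak{M}_n$ projective. Second, the connection on $\mathcal{N}=\mathfrak{M}\otimes_{\mathfrak{S},\varphi}R_0$ does \emph{not} simply come from $\partial/\partial t_i$ on $R_0$; the paper has to solve an Artin--Schreier system (Lemma \ref{lem:4.7}, Proposition \ref{prop:4.8}) to construct a connection compatible with the one on $\mathfrak{M}_{\mathcal{O}_L}$, and this only works over a Zariski open $S_0 = $ ($p$-adic completion of) $R_0[\tfrac{1}{\tilde f}]$. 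To cross the bad divisor $V(\tilde f)$ the paper passes to the codimension-$1$ completions $R_0^{(i)}\cong\mathcal{O}_{\mathfrak{q}}[\![s]\!]$ and invokes the Vasiu--Zink purity theorem for $p$-divisible groups over $2$-dimensional formal power series rings (Proposition \ref{prop:5.2}, using $e\le p-1$), then glues via $R_0 = S_0\cap\bigcap_i R_0^{(i)}$ inside $W(k_c^{\mathrm{perf}})$ (Proposition \ref{prop:5.3}). Your sketch contains none of these ingredients --- neither Brinon--Trihan as a seed, nor the intersection construction, nor the purity step --- and they are not optional: they are what make the construction go through.
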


As an immediate corollary using the results in \cite{moon-relativeRaynaud}, when $R$ has Krull dimension $2$, we obtain the following result on the geometry of the locus of crystalline $\mathcal{G}_R$-representations with Hodge-Tate weights in $[0, 1]$. For a fixed absolutely irreducible $\mathbf{F}_p$-representation $V_0$ of $\mathcal{G}_R$, there exists a universal deformation ring which parametrizes the deformations of $V_0$ (\cite{smit}). By \cite[Theorem 5.7]{moon-relativeRaynaud}, we deduce:

\begin{cor} \label{cor:1.3}
Suppose $R$ has Krull dimension $2$ and $e < p-1$. Then the locus of crystalline representations of $\mathcal{G}_R$ with Hodge-Tate weights in $[0, 1]$ cuts out a closed subscheme of the universal deformation scheme. 	
\end{cor}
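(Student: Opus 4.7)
The plan is to combine Theorem \ref{thm:1.2} with \cite[Theorem 5.7]{moon-relativeRaynaud}. Under the hypothesis $e < p-1$, Theorem \ref{thm:1.2} identifies the class of crystalline $\mathcal{G}_R$-representations with Hodge-Tate weights in $[0,1]$, viewed inside all finite free $\mathbf{Z}_p$-lattice representations of $\mathcal{G}_R$, with the class of Tate modules of $p$-divisible groups over $R$. This translation turns the geometric question about the crystalline locus into the analogous question for the locus of deformations arising from $p$-divisible groups, which is precisely what the cited theorem from \cite{moon-relativeRaynaud} addresses under the Krull dimension $2$ assumption.

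First I would set up notation. Let $R^{\mathrm{univ}}$ denote the universal deformation ring of $V_0$ provided by \cite{smit}, and let $T^{\mathrm{univ}}$ denote the universal lift. Define
\[
\mathcal{C} \subseteq \mathrm{Spec}\, R^{\mathrm{univ}}
\]
to be the subset whose points parametrize deformations of $V_0$ that are crystalline with Hodge-Tate weights in $[0,1]$. The goal is to equip $\mathcal{C}$ with the structure of a closed subscheme of $\mathrm{Spec}\, R^{\mathrm{univ}}$.

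Next, I would apply Theorem \ref{thm:1.2} to rewrite $\mathcal{C}$ as the locus of deformations that arise as the Tate module of some $p$-divisible group over $R$. Since by hypothesis $\dim R = 2$ and $e < p-1$, \cite[Theorem 5.7]{moon-relativeRaynaud} asserts that this $p$-divisible-group locus is cut out by a closed subscheme of $\mathrm{Spec}\, R^{\mathrm{univ}}$, and the corollary follows at once.

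The main thing to verify is that the description of the $p$-divisible-group locus in \cite[Theorem 5.7]{moon-relativeRaynaud} matches the description produced by Theorem \ref{thm:1.2}, i.e.\ that ``arises from a $p$-divisible group over $R$'' has the same meaning in both places. Assuming this compatibility is immediate from the formulations of the two results, there is no further obstacle, and the proof reduces to citing the two theorems in sequence.
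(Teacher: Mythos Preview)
Your proposal is correct and matches the paper's own proof essentially verbatim: the paper restates Corollary~\ref{cor:1.3} more precisely as Theorem~\ref{thm:6.2} and proves it in one line by invoking Theorem~\ref{thm:5.4} (which is Theorem~\ref{thm:1.2}) together with \cite[Theorem~5.7]{moon-relativeRaynaud}. The compatibility you flag is indeed taken for granted from the formulations, so there is nothing further to add.
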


\noindent We give a more precise statement of Corollary \ref{cor:1.3} in Section \ref{sec:6}.

There are three major ingredients for the proof of Theorem \ref{thm:1.2}. Firstly, Brinon and Trihan proved in \cite{brinon-trihan} the generalization of Theorem \ref{thm:1.1} for the case when the base is a complete discrete valuation ring whose residue field has a finite $p$-basis. We use this result together with the fact that the $p$-adic completion of $R_{0, (p)}$ is an example of such rings studied \textit{loc. cit}. Secondly, Kim generalized the Breuil-Kisin classification in the relative setting in \cite{kisin-crystalline}, and showed that the category of $p$-divisible groups over $R$ is anti-equivalent to the category of Kisin modules of height $1$ over $R_0[\![u]\!]$. Using the classification, we reduce our problem to constructing desired Kisin modules. We remark that our method of constructing the appropriate Kisin modules relies on the assumption that $e < p-1$. Lastly, to show the statement for the special case when $R$ is a formal power series ring of dimension $2$, we use the purity result for $p$-divisible groups proved in \cite{vasiu-zink-purity} when the ramification index is small.  

\subsection{Notations} We will reserve $\varphi$ for various Frobenius. To be more precise,  let $A$ be an $W(k)$-algebra on which the arithmetic Frobenius $\varphi$ on $W(k)$ extends, and $M$ an $A$-module. We denote $\varphi_A: A \to A$ for such an extension. Let $\varphi_M: M \to M$ be a $\varphi_A$-semi-linear map. This is equivalent to having an $A$-linear map $1 \otimes \varphi _M: \varphi_A^*M \to M$, where $\varphi_A ^* M $ denotes $ A \otimes_{\varphi_A, A } M$. We always drop the subscripts $A$ and $M$ from $\varphi$ if no confusion arises.  Let $f: A\to B$ be a ring map compatible with Frobenius, that is, $f\circ \varphi_A= \varphi_B\circ f$. Then $\varphi_M$ naturally extends to $\varphi_{M_B}: M_B \to M_B$ for $M_B:= B \otimes_A M$. It is easy to check that $\varphi ^*_B M_B = B \otimes_{A} \varphi ^*_A M$ and $1 \otimes \varphi_{M_B} : \varphi^*_B M_B \to M_B$ is equal to $B \otimes _A (1 \otimes \varphi_M)$.

\section{Relative $p$-adic Hodge theory and \'etale $\varphi$-modules} \label{sec:2}

\subsection{Base ring and crystalline period ring in the relative case} \label{sec:2.1}

We follow the same notations as in the Introduction. We recall the assumptions on the base rings and the construction of crystalline period ring in relative $p$-adic Hodge theory in \cite{brinon-relative} (and in \cite{kim-groupscheme-relative} for Breuil-Kisin classification). We also impose some mild additional assumptions which will be needed later. Let $R_0$ be a ring obtained from $W(k)\langle t_1^{\pm 1}, \ldots, t_m^{\pm 1}\rangle$ by a finite number of iterations of the following operations:
\begin{itemize}
\item $p$-adic completion of an \'etale extension;
\item $p$-adic completion of a localization;
\item completion with respect to an ideal containing $p$.	
\end{itemize}

\noindent We assume that either $W(k)\langle t_1^{\pm 1}, \ldots, t_m^{\pm 1}\rangle \rightarrow R_0$ has geometrically regular fibers or $R_0$ has Krull dimension less than $2$, and that $k \rightarrow R_0/pR_0$ is geometrically integral. In addition, we suppose that $R_0$ is an integral domain containing a Cohen ring $W$ over $W(k)$ such that $R_0$ is formally finite type over $W$, and that $R_0/pR_0$ is a unique factorization domain. 

$R_0/pR_0$ has a finite $p$-basis given by $\{t_1, \ldots, t_m\}$ in the sense of \cite[Definition 1.1.1]{deJong-dieudonnemodule}. The Witt vector Frobenius on $W(k)$ extends (not necessarily uniquely) to $R_0$, and we fix such a Frobenius endomorphism $\varphi: R_0 \rightarrow R_0$. Let $\widehat{\Omega}_{R_0} \coloneqq \varprojlim_n \Omega_{(R_0/p^n)/W(k)}$ be the module of $p$-adically continuous K\"{a}hler differentials. By \cite[Proposition 2.0.2]{brinon-relative}, $\widehat{\Omega}_{R_0} \cong \bigoplus_{i=1}^m R_0\cdot dt_i$. We work over the base ring $R$ given by  $R \coloneqq R_0\otimes_{W(k)}\mathcal{O}_K$.

Let $\overline{R}$ denote the union of finite $R$-subalgebras $R'$ of a fixed separable closure of $\mathrm{Frac}(R)$ such that $R'[\frac{1}{p}]$ is \'etale over $R[\frac{1}{p}]$. Then $\mathrm{Spec} \overline{R}[\frac{1}{p}]$ is a pro-universal covering of $\mathrm{Spec}R[\frac{1}{p}]$, and $\overline{R}$ is the integral closure of $R$ in $\overline{R}[\frac{1}{p}]$. Let $\mathcal{G}_R \coloneqq \mathrm{Gal}(\overline{R}[\frac{1}{p}]/R[\frac{1}{p}]) = \pi_1^{\text{\'et}}(\mathrm{Spec}R[\frac{1}{p}])$. By a representation of $\mathcal{G}_R$, we always mean a finite continuous representation.

The crystalline period ring $B_{\mathrm{cris}}(R)$ is constructed as follows. Let $\displaystyle \overline{R}^{\flat} = \varprojlim_{\varphi} \overline{R}/p\overline{R}$. There exists a natural $W(k)$-linear surjective map $\theta: W(\overline{R}^{\flat}) \rightarrow \widehat{\overline{R}}$ which lifts the projection onto the first factor. Here, $\widehat{\overline{R}}$ denotes the $p$-adic completion of $\overline{R}$. Let $\theta_{R_0}: R_0\otimes_{W(k)}W(\overline{R}^{\flat}) \rightarrow \widehat{\overline{R}}$ be the $R_0$-linear extension of $\theta$. Define the integral crystalline period ring $A_{\mathrm{cris}}(R)$ to be the $p$-adic completion of the divided power envelope of $R_0\otimes_{W(k)}W(\overline{R}^{\flat})$ with respect to $\mathrm{ker}(\theta_{R_0})$. Choose compatibly $\epsilon_n \in \overline{R}$ such that $\epsilon_0 =1, ~\epsilon_n = \epsilon_{n+1}^p$ with $\epsilon_1 \neq 1$, and let $\widetilde{\epsilon} = (\epsilon_n)_{n \geq 0} \in \overline{R}^{\flat}$. Then $\tau \coloneqq \log{[\widetilde{\epsilon}]}\in A_{\mathrm{cris}}(R)$. Define $B_{\mathrm{cris}}(R) = A_{\mathrm{cris}}(R)[\frac{1}{\tau}]$. $B_{\mathrm{cris}}(R)$ is equipped naturally with $\mathcal{G}_R$-action and Frobenius endomorphism, and $B_{\mathrm{cris}}(R)\otimes_{R_0[\frac{1}{p}]}R[\frac{1}{p}]$ is equipped with a natural filtration by $R[\frac{1}{p}]$-submodules. Furthermore, we have a natural integrable connection $\nabla: B_{\mathrm{cris}}(R) \rightarrow B_{\mathrm{cris}}(R)\otimes_{R_0}\widehat{\Omega}_{R_0}$ such that Frobenius is horizontal and Griffiths transversality is satisfied.  

For a $\mathcal{G}_R$-representation $V$ over $\mathbf{Q}_p$, let $D_{\mathrm{cris}}(V) \coloneqq \mathrm{Hom}_{\mathcal{G}_R}(V, B_{\mathrm{cris}}(R))$. The natural morphism
\[
\alpha_{\mathrm{cris}}:  D_{\mathrm{cris}}(V)\otimes_{R_0[\frac{1}{p}]} B_{\mathrm{cris}}(R)  \rightarrow V^{\vee}\otimes_{\mathbf{Q}_p}  B_{\mathrm{cris}}(R)
\] 
is injective. We say $V$ is \textit{crystalline} if $\alpha_{\mathrm{cris}}$ is an isomorphism. When $V$ is crystalline, then $D_{\mathrm{cris}}(V)$ is a finite projective $R_0[\frac{1}{p}]$-module, and $D_{\mathrm{cris}}(V)\otimes_{R_0[\frac{1}{p}]} R[\frac{1}{p}]$ has the filtration induced by that on $B_{\mathrm{cris}}(R)\otimes_{R_0[\frac{1}{p}]}R[\frac{1}{p}]$. We define the Hodge-Tate weights similarly as in the classical $p$-adic Hodge theory. Frobenius and connection on $B_{\mathrm{cris}}(R)$ induce those structures on $D_{\mathrm{cris}}(V)$; for the Frobenius endomorphism on $D_{\mathrm{cris}}(V)$, $1\otimes \varphi: \varphi^*D_{\mathrm{cris}}(V) \rightarrow D_{\mathrm{cris}}(V)$ is an isomorphism, and the connection $\nabla: D_{\mathrm{cris}}(V) \rightarrow D_{\mathrm{cris}}(V)\otimes_{R_0}\widehat{\Omega}_{R_0}$ is integrable and topologically quasi-nilpotent. Furthermore, Griffiths transversality is satisfied and $\varphi$ is horizontal. For a $\mathcal{G}_R$-representation $T$ which is free over $\mathbf{Z}_p$, we say it is crystalline if $T[\frac{1}{p}]$ is crystalline. 

Suppose $S_0$ is another relative base ring over $W(k)\langle t_1^{\pm 1}, \ldots, t_m^{\pm 1}\rangle$ satisfying the above conditions and equipped with a choice of Frobenius, and let $b: R_0 \rightarrow S_0$ be a $\varphi$-equivariant $W(k)\langle t_1^{\pm 1}, \ldots, t_m^{\pm 1}\rangle$-algebra map. We also denote $b: R = R_0\otimes_{W(k)}\mathcal{O}_K \rightarrow S \coloneqq S_0\otimes_{W(k)}\mathcal{O}_K$ the map induced $\mathcal{O}_K$-linearly. By choosing a common geometric point, this induces a map of Galois groups $\mathcal{G}_S \rightarrow \mathcal{G}_R$, and also a map of crystalline period rings $B_{\mathrm{cris}}(R) \rightarrow B_{\mathrm{cris}}(S)$ compatible with all structures. If $V$ is a crystalline representation of $\mathcal{G}_R$ with certain Hodge-Tate weights, then via these maps $V$ is also a crystalline representation of $\mathcal{G}_S$ with the same Hodge-Tate weights, and the construction of $D_{\mathrm{cris}}(V)$ is compatible with the base change. 

We will consider the following base change maps in later sections. Let $\mathcal{O}_{L_0}$ be the $p$-adic completion of $R_{0, (p)}$, and let $b_L: R_0 \rightarrow \mathcal{O}_{L_0}$ be the natural $\varphi$-equivariant map. This induces $b_L: R \rightarrow \mathcal{O}_L \coloneqq \mathcal{O}_{L_0}\otimes_{W(k)}\mathcal{O}_K$. Note that $L  = \mathcal{O}_L[\frac{1}{p}]$ is an example of a complete discrete valuation field with a residue field having a finite $p$-basis, studied in \cite{brinon-trihan}. On the other hand, for each maximal ideal $\mathfrak{q} \in \mathrm{mSpec} R_0$, let $\widehat{R_{0, \mathfrak{q}}}$ be the $\mathfrak{q}$-adic completion of $R_{0, \mathfrak{q}}$. By the structure theorem of complete regular local rings, we have $\widehat{R_{0, \mathfrak{q}}} \cong \mathcal{O}_{\mathfrak{q}}[\![s_1, \ldots, s_l]\!]$ where $\mathcal{O}_{\mathfrak{q}}$ is a Cohen ring with the maximal ideal $(p)$ and $l \geq 0$ is an integer ($\widehat{R_{0, \mathfrak{q}}}$ is understood to be $\mathcal{O}_{\mathfrak{q}}$ when $l = 0$). We consider the natural $\varphi$-equivariant morphism $b_{\mathfrak{q}}: R_0 \rightarrow \widehat{R_{0, \mathfrak{q}}}$, which induces $b_{\mathfrak{q}}: R \rightarrow R_{\mathfrak{q}} \coloneqq \widehat{R_{0, \mathfrak{q}}}\otimes_{W(k)}\mathcal{O}_K$.

\subsection{\'Etale $\varphi$-modules} \label{sec:2.2}

We study \'etale $\varphi$-modules and associated Galois representations. Most of the material in this section is a review of \cite{kim-groupscheme-relative} Section 7, and the underlying geometry is based on perfectoid spaces as in \cite{scholze-perfectoid}. 

Let $R_0$ be a relative base ring over $W(k)\langle t_1^{\pm 1}, \ldots, t_m^{\pm 1}\rangle$ and let $R = R_0\otimes_{W(k)}\mathcal{O}_K$ as above. Choose a uniformizer $\varpi \in \mathcal{O}_K$. For integers $n \geq 0$, we choose compatibly $\varpi_n \in \overline{K}$ such that $\varpi_0 = \varpi$ and $\varpi_{n+1}^p = \varpi_n$, and let $K_{\infty}$ be the $p$-adic completion of $\bigcup_{n \geq 0} K(\varpi_n)$. Then $K_{\infty}$ is a perfectoid field and $(\widehat{\overline{R}}[\frac{1}{p}], \widehat{\overline{R}})$ is a perfectoid affinoid $K_{\infty}$-algebra. Let $K_{\infty}^\flat$ denote the tilt of $K_{\infty}$ as defined in \cite{scholze-perfectoid}, and let $\underline{\varpi} \coloneqq (\varpi_n) \in K_{\infty}^\flat$. 

Let $\mathfrak{S} \coloneqq R_0[\![u]\!]$ equipped with the Frobenius extending that on $R_0$ by $\varphi(u) = u^p$. Let $E_{R_\infty}^+ = \mathfrak{S}/p\mathfrak{S}$, and let $\tilde{E}_{R_\infty}^+$ be the $u$-adic completion of $\varinjlim_{\varphi}E_{R_\infty}^+$. Let $E_{R_\infty} = E_{R_\infty}^+[\frac{1}{u}]$ and $\tilde{E}_{R_\infty} = \tilde{E}_{R_\infty}^+[\frac{1}{u}]$. By \cite[Proposition 5.9]{scholze-perfectoid}, $(\tilde{E}_{R_\infty}, \tilde{E}_{R_\infty}^+)$ is a perfectoid affinoid $L^\flat$-algebra, and we have the natural injective map $(\tilde{E}_{R_\infty}, \tilde{E}_{R_\infty}^+) \hookrightarrow (\overline{R}^\flat[\frac{1}{\underline{\varpi}}], \overline{R}^\flat)$ given by $u \mapsto \underline{\varpi}$. 

Let 
\begin{equation} \label{eq:2.1}
\tilde{R}_{\infty} \coloneqq W(\tilde{E}_{R_\infty}^+)\otimes_{W(K_{\infty}^{\flat \circ}), ~\theta} \mathcal{O}_{K_{\infty}}.	
\end{equation}
By \cite[Remark 5.19]{scholze-perfectoid}, $(\tilde{R}_\infty[\frac{1}{p}], \tilde{R}_\infty)$ is a perfectoid affinoid $K_{\infty}$-algebra whose tilt is $(\tilde{E}_{R_\infty}, \tilde{E}_{R_\infty}^+)$. Furthermore, it is shown in \cite{kim-groupscheme-relative} that we have a natural injective map $(\tilde{R}_\infty[\frac{1}{p}], \tilde{R}_\infty) \hookrightarrow (\widehat{\overline{R}}[\frac{1}{p}], \widehat{\overline{R}})$ whose tilt is $(\tilde{E}_{R_\infty}, \tilde{E}_{R_\infty}^+) \hookrightarrow (\overline{R}^\flat[\frac{1}{\underline{\varpi}}], \overline{R}^\flat)$. For $\mathcal{G}_{\tilde{R}_\infty} \coloneqq \pi_1^{\text{\'{e}t}}(\mathrm{Spec}\tilde{R}_\infty[\frac{1}{p}])$, we then have a continuous map of Galois groups $\mathcal{G}_{\tilde{R}_\infty} \rightarrow \mathcal{G}_R$, which is a closed embedding by \cite[Proposition 5.4.54]{gabber-almost}. By the almost purity theorem in \cite{scholze-perfectoid}, $\overline{R}^\flat[\frac{1}{\underline{\varpi}}]$ can be canonically identified with the $\underline{\varpi}$-adic completion of the affine ring of a pro-universal covering of $\mathrm{Spec}\tilde{E}_{R_\infty}$, and letting $\mathcal{G}_{\tilde{E}_{R_\infty}}$ be the Galois group corresponding to the pro-universal covering, there exists a canonical isomorphism $\mathcal{G}_{\tilde{E}_{R_\infty}} \cong \mathcal{G}_{\tilde{R}_\infty}$. 

\begin{lem} \label{lem:2.1}
Consider the map of Galois groups $\mathcal{G}_{\mathcal{O}_L} \rightarrow \mathcal{G}_R$ induced by choosing a common geometric point for the base change map $b_L: R \rightarrow \mathcal{O}_L$ in Section \ref{sec:2.1}. Then the images of $\mathcal{G}_{\mathcal{O}_L}$ and $\mathcal{G}_{\tilde{R}_\infty}$ inside $\mathcal{G}_R$ generate the group $\mathcal{G}_R$.	
\end{lem}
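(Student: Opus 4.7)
The plan is to translate the generation statement, via Galois theory, into the claim that the fixed field in $\overline{R}[1/p]$ of the closed subgroup $H \subseteq \mathcal{G}_R$ generated by the two images equals $R[1/p]$. Using the chosen compatible geometric points, the fixed fields of the individual images of $\mathcal{G}_{\mathcal{O}_L}$ and $\mathcal{G}_{\tilde{R}_\infty}$ inside $\overline{R}[1/p]$ identify with $\overline{R}[1/p] \cap L$ and $\overline{R}[1/p] \cap \tilde{R}_\infty[\tfrac{1}{p}]$ respectively, where the intersections are taken inside $\widehat{\overline{R}}[\tfrac{1}{p}]$. So I would reduce the lemma to showing that $\overline{R}[1/p] \cap L \cap \tilde{R}_\infty[\tfrac{1}{p}] = R[1/p]$, or after clearing $p$-denominators, that $\overline{R} \cap \mathcal{O}_L \cap \tilde{R}_\infty = R$ inside $\widehat{\overline{R}}$.

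To prove this, I would take $\alpha$ in the intersection and set up an iterative $\varpi$-adic approximation by elements of $R$. First, since $\alpha \in \overline{R}$ is integral over $R$, its residue $\overline{\alpha} \in \mathcal{O}_L/\varpi = \kappa_L = \mathrm{Frac}(R_0/pR_0)$ is integral over $R_0/pR_0$; as $R_0/pR_0$ is assumed to be a unique factorization domain, hence normal, this forces $\overline{\alpha} \in R_0/pR_0$. Lifting to $a_0 \in R$, the difference $\alpha - a_0$ lies in $\varpi\mathcal{O}_L$. The crucial compatibility is that the valuation on $\widehat{\overline{R}}$ normalized by $v(\varpi) = 1$ restricts to the discrete valuation on $\mathcal{O}_L$, so $v(\alpha - a_0) \geq 1$; since $\varpi\tilde{R}_\infty$ equals $\{x \in \tilde{R}_\infty : v(x) \geq 1\}$ by the perfectoid structure, we also have $\alpha - a_0 \in \varpi \tilde{R}_\infty$. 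Thus $\alpha_1 := (\alpha - a_0)/\varpi$ again lies in $\mathcal{O}_L \cap \tilde{R}_\infty$, and one can iterate.

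The main obstacle is ensuring the iteration can be continued, since $\alpha_1$ need not a priori be integral over $R$, so the normality argument cannot be re-applied verbatim. I would address this by working with the minimal polynomial $P(X) \in R[X]$ of $\alpha$ over $F$ and using a Hensel-type perturbation: the Newton-like iterates $\alpha_{k+1} = (\alpha_k - a_k)/\varpi$ all lie in the finite $R$-subalgebra $R[\alpha] \subset \overline{R}$ after clearing bounded denominators, and the successive residues $\overline{\alpha_k}$ can be shown to lie in $R_0/pR_0$ by applying normality to appropriately rescaled polynomials derived from $P$. Pushing this through yields $\alpha = \sum_{k \geq 0} a_k \varpi^k$ in the $p$-adic completion $\widehat{R}$, so $\alpha \in \overline{R} \cap \widehat{R}$.

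The final step is to show $\overline{R} \cap \widehat{R} = R$, which I would deduce from the fact that $R$ is a normal Noetherian domain with $\widehat{R}$ a faithfully flat extension: any element of $\widehat{R}$ algebraic over $F$ and integral over $R$ must satisfy its minimal $F$-polynomial with coefficients in $R$ (by normality of $R$), and the factor $\alpha$ lies in the henselization-like subring $R \subset \widehat{R}$ cut out by Artin approximation applied to the defining polynomial. This concludes that $\alpha \in R$, hence $\alpha \in F$, completing the proof. The delicate technical point throughout is the careful bookkeeping of integrality under the iteration, which is where I expect most of the detailed work to concentrate.
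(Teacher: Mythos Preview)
Your approach is genuinely different from the paper's and substantially more complicated. The paper argues directly on the group side: given $g \in \mathcal{G}_R$, one observes that $\tilde{R}_\infty$ is generated (after the perfectoid construction) by $R$ together with the compatible systems $t_i^{1/p^\infty}$ and $\varpi^{1/p^\infty}$; since these $p$-power roots all live in $\overline{\mathcal{O}_L}$, there exists $h$ in the image of $\mathcal{G}_{\mathcal{O}_L}$ acting on them exactly as $g$ does. Then $h^{-1}g$ fixes $\tilde{R}_\infty$ pointwise and hence lies in $\mathcal{G}_{\tilde{R}_\infty}$, so $g = h\cdot(h^{-1}g)$ exhibits $g$ in the subgroup generated by the two images. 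This is a two-line argument once one unwinds the definition of $\tilde{R}_\infty$.

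Your route through fixed rings is in principle legitimate, but the proposal has a real gap precisely where you flag it. After the first step you lose control of integrality of $\alpha_1$ over $R$, and your suggested fix via ``rescaled polynomials derived from $P$'' and ``Hensel-type perturbation'' is not an argument: there is no evident mechanism producing integrality of the successive residues from the single minimal polynomial of $\alpha$, and the claim that the iterates lie in $R[\alpha]$ ``after clearing bounded denominators'' would need an explicit bound that you never supply. You also never make essential use of the hypothesis $\alpha \in \tilde{R}_\infty$ beyond propagating membership in $\varpi\tilde{R}_\infty$, whereas this is exactly the structure the paper exploits. Finally, your last step invoking Artin approximation is unnecessary: under the standing hypotheses $R_0$, and hence $R$, is already $p$-adically complete, so $\widehat{R}=R$ and $\overline{R}\cap R = R$ is trivial; the entire content lies in the approximation step you have not carried out. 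I would recommend abandoning the fixed-ring computation and instead arguing on the group side as above.
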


\begin{proof}
$E_{R_\infty}^+$ has a finite $p$-basis given by $\{t_1, \ldots, t_m, u\}$. Note that for any element of $g \in \mathcal{G}_R$, there exists an element $h \in \mathcal{G}_{\mathcal{O}_L}$ whose image in $\mathcal{G}_R$ induces the same actions on $t_1^{\frac{1}{p^{\infty}}}, \ldots, t_m^{\frac{1}{p^{\infty}}}, \varpi^{\frac{1}{p^{\infty}}}$. Since $\tilde{R}_{\infty} = W(\tilde{E}_{R_\infty}^+)\otimes_{W(K_{\infty}^{\flat \circ}), ~\theta} \mathcal{O}_{K_{\infty}}$, the actions of $g$ and $h$ are the same on the elements of 	$\tilde{R}_{\infty}$. Hence, the assertion follows. 
\end{proof}

Now, let $\mathcal{O}_{\mathcal{E}}$ be the $p$-adic completion of $\mathfrak{S}[\frac{1}{u}]$. Note that $\varphi$ on $\mathfrak{S}$ extends naturally to $\mathcal{O}_{\mathcal{E}}$.

\begin{defn} \label{defn:2.2}
An \textit{\'{e}tale} $(\varphi, \mathcal{O}_\mathcal{E})$-\textit{module} is a pair $(\mathcal{M}, \varphi_{\mathcal{M}})$ where $\mathcal{M}$ is a finitely generated $\mathcal{O}_\mathcal{E}$-module and $\varphi_{\mathcal{M}}: \mathcal{M} \rightarrow \mathcal{M}$ is a $\varphi$-semilinear endomorphism such that $1\otimes\varphi_{\mathcal{M}}: \varphi^*\mathcal{M} \rightarrow \mathcal{M}$ is an isomorphism. We say that an \'{e}tale $(\varphi, \mathcal{O}_{\mathcal{E}})$-module is \textit{projective} (resp. \textit{torsion}) if the underlying $\mathcal{O}_{\mathcal{E}}$-module $\mathcal{M}$ is projective (resp. $p$-power torsion).	
\end{defn}

\noindent Let $\mathrm{Mod}_{\mathcal{O}_{\mathcal{E}}}$ denote the category of \'{e}tale $(\varphi, \mathcal{O}_{\mathcal{E}})$-modules whose morphisms are $\mathcal{O}_\mathcal{E}$-module maps compatible with Frobenius. Let  $\mathrm{Mod}_{\mathcal{O}_{\mathcal{E}}}^{\mathrm{pr}}$ and  $\mathrm{Mod}_{\mathcal{O}_{\mathcal{E}}}^{\mathrm{tor}}$ respectively denote the full subcategories of projective and torsion objects. Note that we have a natural notion of a subquotient, direct sum, and tensor product for \'{e}tale $(\varphi, \mathcal{O}_\mathcal{E})$-modules, and duality is defined for projective and torsion objects.

\begin{lem} \label{lem:2.3}
Let $\mathcal{M} \in \mathrm{Mod}_{\mathcal{O}_{\mathcal{E}}}^{\mathrm{tor}}$ be a torsion \'etale $\varphi$-module annihilated by $p$. Then $\mathcal{M}$ is a projective $\mathcal{O}_{\mathcal{E}}/p\mathcal{O}_{\mathcal{E}}$-module.	
\end{lem}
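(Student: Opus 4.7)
The plan is to reduce projectivity of $\mathcal{M}$ to a Frobenius-stability argument for Fitting ideals. Write $\overline{\mathcal{O}}_{\mathcal{E}} := \mathcal{O}_{\mathcal{E}}/p\mathcal{O}_{\mathcal{E}}$, and note the identification $\overline{\mathcal{O}}_{\mathcal{E}} \cong (R_0/pR_0)[\![u]\!][u^{-1}]$; the standing assumption in Section \ref{sec:2.1} that $R_0/pR_0$ is a UFD then makes $\overline{\mathcal{O}}_{\mathcal{E}}$ a Noetherian integral domain. Since $\mathcal{M}$ is killed by $p$ and finitely generated over the Noetherian ring $\mathcal{O}_{\mathcal{E}}$, it is a finitely presented $\overline{\mathcal{O}}_{\mathcal{E}}$-module, so the Fitting ideal criterion for projectivity applies: $\mathcal{M}$ will be projective of constant rank $n$ once one knows $\mathrm{Fitt}_n(\mathcal{M}) = \overline{\mathcal{O}}_{\mathcal{E}}$ and $\mathrm{Fitt}_{n-1}(\mathcal{M}) = 0$. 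As the Fitting ideals form an increasing chain stabilizing at $\overline{\mathcal{O}}_{\mathcal{E}}$, it will suffice to show that each $I_i := \mathrm{Fitt}_i(\mathcal{M})$ is either $0$ or $\overline{\mathcal{O}}_{\mathcal{E}}$.

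For the \'etale input, I would combine two standard facts. First, Fitting ideals are compatible with base change, so applied to $\varphi \colon \overline{\mathcal{O}}_{\mathcal{E}} \to \overline{\mathcal{O}}_{\mathcal{E}}$ one obtains $\mathrm{Fitt}_i(\varphi^*\mathcal{M}) = \varphi(I_i) \cdot \overline{\mathcal{O}}_{\mathcal{E}}$. Second, since $1 \otimes \varphi_{\mathcal{M}} \colon \varphi^*\mathcal{M} \xrightarrow{\sim} \mathcal{M}$ is an $\overline{\mathcal{O}}_{\mathcal{E}}$-linear isomorphism, these two Fitting ideals agree, forcing
\[
I_i = \varphi(I_i) \cdot \overline{\mathcal{O}}_{\mathcal{E}}.
\]
Choosing generators $f_1, \ldots, f_k$ of $I_i$ and using that $(\sum_j r_j f_j)^p = \sum_j r_j^p f_j^p$ in characteristic $p$, the right-hand side equals $(f_1^p, \ldots, f_k^p) \subset I_i^p$. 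Hence $I_i \subset I_i^p \subset I_i^2 \subset I_i$, so $I_i$ is idempotent.

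The standard Nakayama argument for finitely generated idempotent ideals then produces $e \in I_i$ with $(1-e) I_i = 0$; in particular $e = e^2$ and $I_i = (e)$. Since $\overline{\mathcal{O}}_{\mathcal{E}}$ is a domain, its only idempotents are $0$ and $1$, so $I_i \in \{0, \overline{\mathcal{O}}_{\mathcal{E}}\}$, which yields projectivity of constant rank for $\mathcal{M}$. I expect the only non-routine step to be ensuring that $\overline{\mathcal{O}}_{\mathcal{E}}$ is actually an integral domain: this is precisely the role of the UFD hypothesis on $R_0/pR_0$ in the setup, without which $\overline{\mathcal{O}}_{\mathcal{E}}$ could carry nontrivial idempotents and the Fitting ideal argument would stop short of its conclusion.
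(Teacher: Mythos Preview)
Your argument is correct and is essentially the Fitting-ideal argument of \cite[Lemma~7.10]{andreatta} that the paper cites: the \'etale structure forces each $I_i=\mathrm{Fitt}_i(\mathcal{M})$ to satisfy $I_i=\varphi(I_i)\overline{\mathcal{O}}_{\mathcal{E}}$, hence to be idempotent, hence $0$ or the unit ideal in the domain $\overline{\mathcal{O}}_{\mathcal{E}}$. The one point you might make more explicit is that $\varphi$ on $\overline{\mathcal{O}}_{\mathcal{E}}$ is the absolute $p$-th power map (because the chosen $\varphi$ on $R_0$ lifts Frobenius on $R_0/pR_0$ and $\varphi(u)=u^p$), which is exactly what underlies the inclusion $\varphi(I_i)\overline{\mathcal{O}}_{\mathcal{E}}\subset I_i^p$.
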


\begin{proof}
This follows from essentially the same proof as in \cite[Lemma 7.10]{andreatta}.
\end{proof}

We consider $W(\overline{R}^\flat[\frac{1}{\underline{\varpi}}])$ as an $\mathcal{O}_\mathcal{E}$-algebra via mapping $u$ to the Teichm\"uller lift $[\underline{\varpi}]$ of $\underline{\varpi}$, and let $\mathcal{O}_\mathcal{E}^{\mathrm{ur}}$ be the integral closure of $\mathcal{O}_\mathcal{E}$ in $W(\overline{R}^\flat[\frac{1}{\underline{\varpi}}])$. Let $\widehat{\mathcal{O}}_{\mathcal{E}}^{\mathrm{ur}}$ be its $p$-adic completion. Since $\mathcal{O}_\mathcal{E}$ is normal, we have $\mathrm{Aut}_{\mathcal{O}_\mathcal{E}}(\mathcal{O}_\mathcal{E}^{\mathrm{ur}}) \cong \mathcal{G}_{E_{R_\infty}} \coloneqq \pi_1^{\text{\'et}}(\mathrm{Spec}E_{R_\infty})$, and by \cite[Proposition 5.4.54]{gabber-almost} and the almost purity theorem, we have $\mathcal{G}_{E_{R_\infty}} \cong \mathcal{G}_{\tilde{E}_{R_\infty}} \cong \mathcal{G}_{\tilde{R}_\infty}$. This induces $\mathcal{G}_{\tilde{R}_\infty}$-action on $\widehat{\mathcal{O}}_{\mathcal{E}}^{\mathrm{ur}}$. The following is proved in \cite{kim-groupscheme-relative}.

\begin{lem} \label{lem:2.4} \emph{(cf. \cite[Lemma 7.5 and 7.6]{kim-groupscheme-relative})}
We have $(\widehat{\mathcal{O}}_{\mathcal{E}}^{\mathrm{ur}})^{\mathcal{G}_{\tilde{R}_\infty}} = \mathcal{O}_\mathcal{E}$ and the same holds modulo $p^n$. Furthermore, there exists a unique $\mathcal{G}_{\tilde{R}_\infty}$-equivariant ring endomorphism $\varphi$ on $\widehat{\mathcal{O}}_{\mathcal{E}}^{\mathrm{ur}}$ lifting the $p$-th power map on $\widehat{\mathcal{O}}_{\mathcal{E}}^{\mathrm{ur}}/(p)$ and extending $\varphi$ on $\mathcal{O}_\mathcal{E}$. The inclusion $\widehat{\mathcal{O}}_{\mathcal{E}}^{\mathrm{ur}} \hookrightarrow W(\overline{R}^\flat[\frac{1}{\underline{\varpi}}])$ is $\varphi$-equivariant where the latter ring is given the Witt vector Frobenius.
\end{lem}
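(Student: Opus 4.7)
The lemma bundles three claims: the invariance $(\widehat{\mathcal{O}}_{\mathcal{E}}^{\mathrm{ur}})^{\mathcal{G}_{\tilde{R}_\infty}} = \mathcal{O}_\mathcal{E}$ together with its mod $p^n$ version, the existence and uniqueness of a $\mathcal{G}_{\tilde R_\infty}$-equivariant Frobenius on $\widehat{\mathcal{O}}_\mathcal{E}^{\mathrm{ur}}$ lifting the $p$-th power, and its compatibility with the Witt vector Frobenius $\varphi_W$ on $W(\overline{R}^\flat[\tfrac{1}{\underline{\varpi}}])$. My strategy is to first establish the invariance modulo $p$ using the almost purity identification $\mathcal{G}_{\tilde R_\infty}\cong \mathcal{G}_{E_{R_\infty}}$ set up in the paragraphs preceding the statement; then bootstrap to mod $p^n$ and to the integral case by devissage; and finally produce the Frobenius by simply restricting $\varphi_W$, which simultaneously settles existence in (ii) and assertion (iii).

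For the mod $p$ invariants, the reduction identifies $\mathcal{O}_\mathcal{E}/p$ with $E_{R_\infty}$, and $\mathcal{O}_\mathcal{E}^{\mathrm{ur}}/p$ with the integral closure of $E_{R_\infty}$ inside the appropriate subring of $\overline{R}^\flat[\tfrac{1}{\underline{\varpi}}]$; by construction this is an ind-\'etale extension whose automorphism group over $E_{R_\infty}$ is $\mathcal{G}_{E_{R_\infty}}\cong \mathcal{G}_{\tilde R_\infty}$. Standard Galois theory of ind-\'etale extensions of normal rings then gives $(\mathcal{O}_\mathcal{E}^{\mathrm{ur}}/p)^{\mathcal{G}_{\tilde R_\infty}} = E_{R_\infty}$, and this persists after $p$-adic completion since $\widehat{\mathcal{O}}_\mathcal{E}^{\mathrm{ur}}/p = \mathcal{O}_\mathcal{E}^{\mathrm{ur}}/p$. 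To go from $p$ to $p^n$, I would use that $\widehat{\mathcal{O}}_\mathcal{E}^{\mathrm{ur}}\hookrightarrow W(\overline{R}^\flat[\tfrac{1}{\underline{\varpi}}])$ forces $p$ to be a non-zero-divisor, so the sequence
\[
0 \longrightarrow \widehat{\mathcal{O}}_\mathcal{E}^{\mathrm{ur}}/p^{n-1} \xrightarrow{\,p\,} \widehat{\mathcal{O}}_\mathcal{E}^{\mathrm{ur}}/p^{n} \longrightarrow \widehat{\mathcal{O}}_\mathcal{E}^{\mathrm{ur}}/p \longrightarrow 0
\]
is exact. A standard induction closes the argument: given a $\mathcal{G}_{\tilde R_\infty}$-invariant $x\in \widehat{\mathcal{O}}_\mathcal{E}^{\mathrm{ur}}/p^n$, its reduction mod $p$ lifts to some $y\in \mathcal{O}_\mathcal{E}/p^n$, then $x-y=pz$ for a Galois-invariant $z\in \widehat{\mathcal{O}}_\mathcal{E}^{\mathrm{ur}}/p^{n-1}$, which lies in $\mathcal{O}_\mathcal{E}/p^{n-1}$ by the inductive hypothesis. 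The integral invariance then follows by taking the inverse limit in $n$.

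For the Frobenius I would simply restrict $\varphi_W$ from $W(\overline{R}^\flat[\tfrac{1}{\underline{\varpi}}])$. On $u=[\underline{\varpi}]$ multiplicativity of the Teichm\"uller lift gives $\varphi_W(u)=[\underline{\varpi}^p]=u^p$, and on the embedded $R_0$ the map $\varphi_W$ agrees with the chosen $\varphi$ on $R_0$ by $\varphi$-equivariance of the embedding, so $\varphi_W$ preserves $\mathcal{O}_\mathcal{E}$ and restricts to $\varphi$ there. Since $\varphi_W$ preserves integrality over $\mathcal{O}_\mathcal{E}$, it preserves $\mathcal{O}_\mathcal{E}^{\mathrm{ur}}$, hence extends by $p$-adic continuity to $\widehat{\mathcal{O}}_\mathcal{E}^{\mathrm{ur}}$. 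The lift-of-$p$-th-power property is built into $\varphi_W$, and Galois equivariance is automatic because the $\mathcal{G}_{\tilde R_\infty}$-action on $W(\overline{R}^\flat[\tfrac{1}{\underline{\varpi}}])$ comes from Witt functoriality, which commutes with $\varphi_W$. For uniqueness, any two such Frobenius lifts agree on $\mathcal{O}_\mathcal{E}$ and both reduce to the $p$-th power mod $p$; by the ind-\'etale/strict-henselian nature of $\mathcal{O}_\mathcal{E}^{\mathrm{ur}}/\mathcal{O}_\mathcal{E}$, the lift to characteristic $0$ is uniquely determined by its restriction to $\mathcal{O}_\mathcal{E}$ together with its reduction mod $p$, forcing the two to coincide on $\mathcal{O}_\mathcal{E}^{\mathrm{ur}}$ and hence on the $p$-adic completion.

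The main obstacle is the first step: correctly identifying $\mathcal{O}_\mathcal{E}^{\mathrm{ur}}/p$ inside $\overline{R}^\flat[\tfrac{1}{\underline{\varpi}}]$ as a ring on which $\mathcal{G}_{\tilde R_\infty}$ acts with exactly the expected invariants. This is precisely the point at which the almost purity theorem and the perfectoid tilt-untilt dictionary recalled before the statement do the real work, via the identification $\mathcal{G}_{\tilde R_\infty}\cong \mathcal{G}_{\tilde E_{R_\infty}}\cong \mathcal{G}_{E_{R_\infty}}$. Once this characteristic-$p$ picture is in place, the inductive bootstrap in $n$ and the Witt-vector construction of $\varphi$ are essentially mechanical.
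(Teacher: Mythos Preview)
The paper does not actually prove this lemma: it is stated with a citation to \cite[Lemma 7.5 and 7.6]{kim-groupscheme-relative} and no further argument is given. Your sketch is therefore not to be compared against a proof in the present paper, but rather against the standard argument (which is essentially what appears in Kim's paper).

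Your outline is correct and follows that standard line. The key inputs are exactly the ones you identify: the identification $\mathcal{G}_{\tilde R_\infty}\cong\mathcal{G}_{E_{R_\infty}}$ to compute mod-$p$ invariants via Galois theory of the ind-\'etale extension $E_{R_\infty}\to\mathcal{O}_\mathcal{E}^{\mathrm{ur}}/p$; the $p$-torsion-freeness of $\widehat{\mathcal{O}}_\mathcal{E}^{\mathrm{ur}}$ (from its embedding in Witt vectors of a perfect ring) to run the d\'evissage in $n$; and the restriction of the Witt vector Frobenius to get existence and compatibility of $\varphi$, with uniqueness coming from formal \'etaleness of $\mathcal{O}_\mathcal{E}^{\mathrm{ur}}$ over $\mathcal{O}_\mathcal{E}$. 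One point you pass over lightly is why the embedding $\mathcal{O}_\mathcal{E}\hookrightarrow W(\overline{R}^\flat[\tfrac{1}{\underline{\varpi}}])$ is $\varphi$-equivariant on the $R_0$-part to begin with: this is because the map $R_0\to W(\overline{R}^\flat)$ is the unique $\varphi$-compatible lift of $R_0/p\to\overline{R}^\flat$ (using that $R_0$ is $p$-adically formally smooth over $W(k)$ and $\overline{R}^\flat$ is perfect), so $\varphi$-equivariance is built into its construction. With that granted, your argument goes through.
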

 
Let $\mathrm{Rep}_{\mathbf{Z}_p}(\mathcal{G}_{\tilde{R}_\infty})$ be the category of $\mathbf{Z}_p$-representations of $\mathcal{G}_{\tilde{R}_\infty}$, and let $\mathrm{Rep}_{\mathbf{Z}_p}^{\mathrm{pr}}(\mathcal{G}_{\tilde{R}_\infty})$ and $\mathrm{Rep}_{\mathbf{Z}_p}^{\mathrm{tor}}(\mathcal{G}_{\tilde{R}_\infty})$ respectively denote the full subcategories of free and torsion objects. For $\mathcal{M} \in \mathrm{Mod}_{\mathcal{O}_\mathcal{E}}$ and $T \in \mathrm{Rep}_{\mathbf{Z}_p}(\mathcal{G}_{\tilde{R}_\infty})$, we define $T(\mathcal{M}) \coloneqq (\mathcal{M}\otimes_{\mathcal{O}_\mathcal{E}}\widehat{\mathcal{O}}_{\mathcal{E}}^{\mathrm{ur}})^{\varphi = 1}$ and $\mathcal{M}(T) \coloneqq (T\otimes_{\mathbf{Z}_p}\widehat{\mathcal{O}}_{\mathcal{E}}^{\mathrm{ur}})^{\mathcal{G}_{\tilde{R}_\infty}}$. For a torsion \'etale $\varphi$-module $\mathcal{M} \in \mathrm{Mod}_{\mathcal{O}_{\mathcal{E}}}^{\mathrm{tor}}$, we define its \textit{length} to be the length of $\mathcal{M}\otimes_{\mathcal{O}_{\mathcal{E}}}(\mathcal{O}_{\mathcal{E}})_{(p)}$ as an $(\mathcal{O}_{\mathcal{E}})_{(p)}$-module. 

\begin{prop} \label{prop:2.5} \emph{(cf. \cite[Proposition 7.7]{kim-groupscheme-relative})} The assignments $T(\cdot)$ and $\mathcal{M}(\cdot)$ are exact equivalences (inverse of each other) of $\otimes$-categories between $\mathrm{Mod}_{\mathcal{O}_{\mathcal{E}}}$ and $\mathrm{Rep}_{\mathbf{Z}_p}(\mathcal{G}_{\tilde{R}_\infty})$. Moreover, $T(\cdot)$ and $\mathcal{M}(\cdot)$ restrict to rank-preserving equivalence of categories between $\mathrm{Mod}_{\mathcal{O}_{\mathcal{E}}}^{\mathrm{pr}}$ and $\mathrm{Rep}_{\mathbf{Z}_p}^{\mathrm{pr}}(\mathcal{G}_{\tilde{R}_\infty})$ and length-preserving equivalence of categories between $\mathrm{Mod}_{\mathcal{O}_{\mathcal{E}}}^{\mathrm{tor}}$ and $\mathrm{Rep}_{\mathbf{Z}_p}^{\mathrm{tor}}({\mathcal{G}}_{\tilde{R}_\infty})$. In both cases, $T(\cdot)$ and $\mathcal{M}(\cdot)$ commute with taking duals. 
\end{prop}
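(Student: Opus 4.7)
The plan is to set up the standard Fontaine-style correspondence with $\widehat{\mathcal{O}}_\mathcal{E}^{\mathrm{ur}}$ playing the role of the period ring, and to establish, for every $\mathcal{M}\in\mathrm{Mod}_{\mathcal{O}_\mathcal{E}}$, that the natural comparison map
\[
\alpha_\mathcal{M}:\;T(\mathcal{M})\otimes_{\mathbf{Z}_p}\widehat{\mathcal{O}}_\mathcal{E}^{\mathrm{ur}}\longrightarrow \mathcal{M}\otimes_{\mathcal{O}_\mathcal{E}}\widehat{\mathcal{O}}_\mathcal{E}^{\mathrm{ur}}
\]
is an isomorphism of $(\varphi,\mathcal{G}_{\tilde{R}_\infty})$-modules. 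Once this is in hand, taking $\mathcal{G}_{\tilde{R}_\infty}$-invariants of both sides and invoking Lemma \ref{lem:2.4} gives $\mathcal{M}(T(\mathcal{M}))\cong \mathcal{M}$, while taking $\varphi$-invariants and using the companion identity $(\widehat{\mathcal{O}}_\mathcal{E}^{\mathrm{ur}})^{\varphi=1}=\mathbf{Z}_p$ (provable along the same lines as Lemma \ref{lem:2.4}) gives $T(\mathcal{M}(T))\cong T$. Reducing $\alpha_\mathcal{M}$ modulo $p$ and counting dimensions over $\widehat{\mathcal{O}}_\mathcal{E}^{\mathrm{ur}}/p$ yields rank and length preservation, and exactness and $\otimes$-compatibility follow formally from bijectivity of $\alpha_\mathcal{M}$ together with the faithful flatness of $\widehat{\mathcal{O}}_\mathcal{E}^{\mathrm{ur}}$ over $\mathcal{O}_\mathcal{E}$.

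First I would handle the torsion case by d\'evissage to the situation $p\mathcal{M}=0$. By Lemma \ref{lem:2.3}, such an $\mathcal{M}$ is projective over $\mathcal{O}_\mathcal{E}/p=E_{R_\infty}$, so the problem reduces to showing that $\varphi-1$ acts surjectively on $\mathcal{M}\otimes_{\mathcal{O}_\mathcal{E}/p}(\widehat{\mathcal{O}}_\mathcal{E}^{\mathrm{ur}}/p)$ with kernel an $\mathbf{F}_p$-vector space of dimension $\mathrm{rk}_{\mathcal{O}_\mathcal{E}/p}\mathcal{M}$. This is an Artin--Schreier statement: picking a basis turns it into solving systems $\varphi(\mathbf{x})=A\mathbf{x}+\mathbf{b}$ with $A\in\mathrm{GL}$, which can be solved inside $\mathcal{O}_\mathcal{E}^{\mathrm{ur}}/p$ by its very construction as the integral closure of $\mathcal{O}_\mathcal{E}$ inside $W(\overline{R}^\flat[\frac{1}{\underline{\varpi}}])$, together with a successive approximation argument to pass to the $p$-adic completion $\widehat{\mathcal{O}}_\mathcal{E}^{\mathrm{ur}}/p$. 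The almost purity theorem, which underlies the identification $\mathcal{G}_{E_{R_\infty}}\cong\mathcal{G}_{\tilde{R}_\infty}$ recalled in Section \ref{sec:2.2}, is what guarantees that the resulting $\mathcal{G}_{\tilde{R}_\infty}$-module $T(\mathcal{M})$ carries the right Galois action. An induction on the annihilator of $\mathcal{M}$, combined with the snake lemma applied to short exact sequences in $\mathrm{Mod}_{\mathcal{O}_\mathcal{E}}^{\mathrm{tor}}$ (whose relevant $H^1$ vanishes by the surjectivity just proved), then propagates the isomorphism to all torsion objects and simultaneously yields exactness of $T(\cdot)$.

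For a projective $\mathcal{M}$, I would write $\mathcal{M}=\varprojlim_n \mathcal{M}/p^n\mathcal{M}$. The torsion case makes each $T(\mathcal{M}/p^n\mathcal{M})$ free over $\mathbf{Z}/p^n$ of rank $\mathrm{rk}_{\mathcal{O}_\mathcal{E}}\mathcal{M}$ with surjective transition maps, so Mittag-Leffler gives $T(\mathcal{M})=\varprojlim T(\mathcal{M}/p^n\mathcal{M})$ free of the same rank over $\mathbf{Z}_p$. Passing the mod-$p^n$ comparison isomorphisms to the inverse limit and invoking $p$-adic completeness of $\widehat{\mathcal{O}}_\mathcal{E}^{\mathrm{ur}}$ yields $\alpha_\mathcal{M}$ for projective objects. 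A symmetric argument starting from $T\in\mathrm{Rep}_{\mathbf{Z}_p}(\mathcal{G}_{\tilde{R}_\infty})$ and forming $\mathcal{M}(T)=\varprojlim \mathcal{M}(T/p^nT)$ establishes that $\mathcal{M}(\cdot)$ is a quasi-inverse, and duality compatibility falls out by identifying $\mathrm{Hom}$ on both sides inside the tensored module over $\widehat{\mathcal{O}}_\mathcal{E}^{\mathrm{ur}}$.

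The main obstacle is the Artin--Schreier step, i.e., the surjectivity of $\varphi-1$ on $\widehat{\mathcal{O}}_\mathcal{E}^{\mathrm{ur}}/p$ with $\mathbf{F}_p$-kernel of the expected size. The delicate issue is controlling the transition from $\mathcal{O}_\mathcal{E}^{\mathrm{ur}}$ to its $p$-adic completion, so that Artin--Schreier roots actually remain in $\widehat{\mathcal{O}}_\mathcal{E}^{\mathrm{ur}}$, and verifying that the almost purity theorem cleanly transfers the required statement from $E_{R_\infty}$ to $\tilde{E}_{R_\infty}$. Everything else is a standard d\'evissage-plus-$p$-adic-completion routine familiar from the classical Fontaine equivalence; the relative setup works because $\mathfrak{S}=R_0[\![u]\!]$ behaves for this purpose like a two-dimensional complete local ring whose residue field $R_0/pR_0$ has a finite $p$-basis, so that the equivalence here is a direct generalization of the one over a $p$-adic local field.
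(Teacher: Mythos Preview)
Your proposal is correct and follows essentially the same route as the paper. The paper's own proof is a terse pointer to \cite[Proposition 7.7]{kim-groupscheme-relative}, noting that Lemma~\ref{lem:2.3} plus the usual d\'evissage reduces matters to the Artin--Schreier statement of Katz's Lemma 4.1.1, with exactness handled as in \cite[Theorem 7.11]{andreatta}; your write-up simply unpacks these references, and your ``main obstacle'' (surjectivity of $\varphi-1$ with controlled kernel) is precisely the content of the Katz lemma.
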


\begin{proof}
This is \cite[Proposition 7.7]{kim-groupscheme-relative}. We remark here for some additional details. Note that $E_{R_\infty}$ is a normal domain and $\pi_1^{\text{\'{e}t}}(\mathrm{Spec} \mathbf{E}_{R_{\infty}}) \cong \mathcal{G}_{\tilde{R}_\infty}$. Given Lemma \ref{lem:2.3}, the assertion therefore follows from the usual d\'evissage and [Katz, Lemma 4.1.1]. Note that both functors $T(\cdot)$ and $\mathcal{M}(\cdot)$ are a priori left exact by definition, and exactness can be proved by the same argument as in the proof of \cite[Theorem 7.11]{andreatta}. 	
\end{proof}

Suppose $S_0$ is another relative base ring over $W(k)\langle t_1^{\pm 1}, \ldots, t_m^{\pm 1}\rangle$ as in Section \ref{sec:2.1} equipped with a choice of Frobenius, and suppose $b: R_0 \hookrightarrow S_0$ be a $\varphi$-equivariant $W(k)\langle t_1^{\pm 1}, \ldots, t_m^{\pm 1}\rangle$-algebra map which is injective. Let $b: R = R_0\otimes_{W(k)}\mathcal{O}_K \hookrightarrow S \coloneqq S_0\otimes_{W(k)}\mathcal{O}_K$ be the induced injective map. By choosing a common geometric point we have an injective map $\overline{R} \hookrightarrow \overline{S}$, and this induces an embedding $\tilde{R}_{\infty} \hookrightarrow \tilde{S}_{\infty}$ by the constructions given in equation (\ref{eq:2.1}). Hence, the corresponding map of Galois groups $\mathcal{G}_S \rightarrow \mathcal{G}_R$ restricts to $\mathcal{G}_{\tilde{S}_\infty} \rightarrow \mathcal{G}_{\tilde{R}_\infty}$. Let $\mathfrak{S}_S = S_0[\![u]\!]$ and let $\mathcal{O}_{\mathcal{E}, S}$ be the $p$-adic completion of $\mathfrak{S}_S[\frac{1}{u}]$. Let $\mathcal{M}_S(\cdot)$ be the functor for the base ring $S$ constructed similarly as above. Let $T \in \mathrm{Rep}_{\mathbf{Z}_p}^{\mathrm{pr}}(\mathcal{G}_{\tilde{R}_\infty})$. Then $T$ is also a $\mathcal{G}_{\tilde{S}_\infty}$-representation via the map $\mathcal{G}_{\tilde{S}_\infty} \rightarrow \mathcal{G}_{\tilde{R}_\infty}$, and we have the natural isomorphism $\mathcal{M}(T)\otimes_{\mathcal{O}_\mathcal{E}}\mathcal{O}_{\mathcal{E}, S} \cong \mathcal{M}_S(T)$ as \'etale $(\varphi, \mathcal{O}_{\mathcal{E}, S})$-modules by the definition of the functors $\mathcal{M}(\cdot)$ and $T(\cdot)$ and by Proposition \ref{prop:2.5}.

\section{Relative Breuil-Kisin classification} \label{sec:3}

We now explain the classification of $p$-divisible groups over $\mathrm{Spec} R$ via Kisin modules, which is proved in \cite{kisin-crystalline} when $R = \mathcal{O}_K$ and generalized in \cite{kim-groupscheme-relative} for the relative case. Denote by $E(u)$ the Eisenstein polynomial for the extension $K$ over $W(k)[\frac{1}{p}]$, and let $\mathfrak{S} = R_0[\![u]\!]$ as above. 

\begin{defn} \label{def:3.1}
Denote by $\mathrm{Kis}^1(\mathfrak{S})$ the category of pairs $(\mathfrak{M}, \varphi_{\mathfrak{M}})$ where
\begin{itemize}
\item $\mathfrak{M}$ is a finitely generated projective $\mathfrak{S}$-module;
\item $\varphi_{\mathfrak{M}}: \mathfrak{M} \rightarrow \mathfrak{M}$ is a $\varphi$-semilinear map such that $\mathrm{coker}(1\otimes\varphi_{\mathfrak{M}})$ is annihilated by $E(u)$.
\end{itemize} 
The morphisms are $\mathfrak{S}$-module maps compatible with Frobenius.	
\end{defn}

\noindent Note that for $(\mathfrak{M}, \varphi_{\mathfrak{M}}) \in \mathrm{Kis}^1(\mathfrak{S})$, $1\otimes\varphi_{\mathfrak{M}}: \varphi^*\mathfrak{M} \rightarrow \mathfrak{M}$ is injective since $\mathfrak{M}$ is finite projective over $\mathfrak{S}$ and $\mathrm{coker}(1\otimes\varphi_{\mathfrak{M}})$ is killed by $E(u)$. Consider the composite $\mathfrak{S} \twoheadrightarrow \mathfrak{S}/u\mathfrak{S} = R_0 \stackrel{\varphi}{\rightarrow} R_0$.

\begin{defn} \label{def:3.2}
A \textit{Kisin module} of height $1$ is a tuple $(\mathfrak{M}, \varphi_{\mathfrak{M}}, \nabla_{\mathfrak{M}})$ such that
\begin{itemize}
\item $(\mathfrak{M}, \varphi_{\mathfrak{M}}) \in \mathrm{Kis}^1(\mathfrak{S})$;
\item Let $\mathcal{N} \coloneqq \mathfrak{M}\otimes_{\mathfrak{S}, \varphi}R_0$ equipped with the Frobenius $\varphi_{\mathfrak{M}}\otimes\varphi_{R_0}$. Then $\nabla_{\mathfrak{M}}: \mathcal{N} \rightarrow \mathcal{N}\otimes_{R_0}\widehat{\Omega}_{R_0}$ is a topologically quasi-nilpotent integrable connection commuting with Frobenius.	
\end{itemize}
Here, $\nabla_{\mathfrak{M}}$ being topologically quasi-nilpotent means that the induced connection on $\mathcal{N}/p\mathcal{N}$ is nilpotent. Denote by $\mathrm{Kis}^1(\mathfrak{S}, \nabla)$ the category of Kisin modules of height $1$ whose morphisms are $\mathfrak{S}$-module maps compatible with Frobenius and connection.
\end{defn}

The following theorem classifying the $p$-divisible groups is proved in \cite{kim-groupscheme-relative}.

\begin{thm} \label{thm:3.3} \emph{(cf. \cite[Corollary 6.7 and Remark 6.9]{kim-groupscheme-relative})}
There exists an exact anti-equivalence of categories 
\[
\mathfrak{M}^*: \{p\mbox{-divisible groups over } \mathrm{Spec}R\} \rightarrow \mathrm{Kis}^1(\mathfrak{S}, \nabla).
\]	
Let $S_0$ be another base ring satisfying the condition as in Section \ref{sec:2.1} and equipped with a Frobenius, and let $b: R_0 \rightarrow S_0$ be a $\varphi$-equivariant map. Then the formation of $\mathfrak{M}^*$ commutes with the base change $R \rightarrow S \coloneqq S_0\otimes_{W(k)}\mathcal{O}_K$ induced $\mathcal{O}_K$-linearly from $b$.   
\end{thm}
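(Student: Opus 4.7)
The plan is to construct $\mathfrak{M}^*$ by first evaluating the contravariant Dieudonn\'e crystal on an appropriate PD-thickening of $R$, and then descending along Frobenius from the resulting ``Breuil module'' to a Kisin module over $\mathfrak{S}$. Let $S$ denote the $p$-adic completion of the divided-power envelope of $\mathfrak{S} = R_0[\![u]\!]$ with respect to the ideal generated by $E(u)$, equipped with the surjection $S \twoheadrightarrow R$ sending $u \mapsto \varpi$; note that $\varphi$ on $R_0$ extends to $S$ by $\varphi(u) = u^p$. Given a $p$-divisible group $G$ over $R$, the evaluation $\mathcal{M}(G) \coloneqq \mathbb{D}(G)(S)$ is a finite projective $S$-module carrying a $\varphi$-semilinear Frobenius, a Hodge filtration $\mathrm{Fil}^1 \mathcal{M}(G)$ whose image in $\mathcal{M}(G)/(\mathrm{Fil}^1 S)\mathcal{M}(G)$ recovers the dual of $\mathrm{Lie}\, G$, and an integrable topologically quasi-nilpotent connection $\nabla$ horizontal for $\varphi$. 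By the relative crystalline Dieudonn\'e theory (de Jong and Berthelot--Messing, together with its extension to the formally smooth base considered here), the assignment $G \mapsto \mathcal{M}(G)$ is an anti-equivalence onto the appropriate category of such Breuil modules.

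The second step, following Kisin's descent argument, is to exhibit inside $\mathcal{M}(G)$ a unique $\varphi$-stable finite projective $\mathfrak{S}$-submodule $\mathfrak{M} = \mathfrak{M}^*(G)$ such that the natural map $S \otimes_{\varphi, \mathfrak{S}} \mathfrak{M} \to \mathcal{M}(G)$ is an isomorphism and $\mathrm{coker}(1 \otimes \varphi_\mathfrak{M})$ is killed by $E(u)$. The connection $\nabla_{\mathfrak{M}}$ on $\mathcal{N} = \mathfrak{M} \otimes_{\mathfrak{S}, \varphi} R_0$ is then read off from the identification $\mathcal{N} \cong \mathcal{M}(G) \otimes_S R_0$ induced by $u \mapsto 0$; integrability, Frobenius compatibility, and topological quasi-nilpotence are inherited from $\nabla$ on $\mathcal{M}(G)$. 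Together with uniqueness of the descent, this produces the functor $\mathfrak{M}^*$ and an equivalence onto $\mathrm{Kis}^1(\mathfrak{S}, \nabla)$; exactness follows from exactness of crystalline Dieudonn\'e theory and faithful flatness of $\mathfrak{S} \to S$. Compatibility with the base change $b : R_0 \to S_0$ is formal, since the rings $\mathfrak{S}$ and $S$ and the crystalline construction all commute with the evident induced maps on Witt-vector and divided-power envelopes.

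The main obstacle is the descent step producing the $\varphi$-stable lattice $\mathfrak{M}^*(G) \subset \mathcal{M}(G)$. In the classical case $R_0 = W(k)$ one uses slope theory for $\varphi$-modules over the analytic annulus $\mathfrak{S}[\tfrac{1}{p}]$, together with the B\'ezout property of that ring; in the relative setting one needs the corresponding input over the appropriate relative rigid-analytic annulus in the spirit of Kedlaya--Liu. A further delicate point is verifying that $\nabla_\mathfrak{M}$ transferred to $\mathcal{N}$ is topologically quasi-nilpotent: this is where the assumption that $\{t_1, \ldots, t_m\}$ is a finite $p$-basis of $R_0/p$ and the formally-finite-type hypothesis on $R_0$ play a role, ensuring that $\widehat{\Omega}_{R_0}$ is free of the expected rank and that the connection is controlled by its action on the $p$-basis. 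Once the Frobenius-stable lattice is constructed and shown to be unique, full faithfulness, essential surjectivity, exactness, and base change compatibility all follow formally from the Breuil module classification and the uniqueness of the descent.
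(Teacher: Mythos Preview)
The paper does not prove Theorem~\ref{thm:3.3} at all: it is stated as a quotation from \cite[Corollary~6.7 and Remark~6.9]{kim-groupscheme-relative}, and the paper immediately proceeds to use it as a black box. So there is no ``paper's own proof'' to compare against; your proposal is an attempt to reconstruct the argument in Kim's paper, not in this one.

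That said, a brief comment on your sketch versus Kim's actual approach. Your outline follows the original Kisin strategy---evaluate the Dieudonn\'e crystal on the Breuil ring $S$, then descend along Frobenius to an $\mathfrak{S}$-lattice---and you correctly flag the descent step as the hard point. Kim's proof, however, does not proceed by Kedlaya--Liu style slope theory over a relative annulus; instead it goes through Lau's theory of frames and windows. One first classifies $p$-divisible groups over $R$ by Dieudonn\'e displays (or windows) for the Witt frame, then compares the Witt frame to the Breuil--Kisin frame $(\mathfrak{S}, (E(u)), R, \varphi, \varphi/E(u))$ via a crystalline morphism of frames, which transports the equivalence. The connection datum $\nabla_{\mathfrak{M}}$ arises from the crystalline nature of the display, and base change compatibility is built into the frame formalism. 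Your approach is morally correct in the classical case but the ``relative B\'ezout/slope'' input you invoke is not the mechanism Kim uses; the frame/window comparison is both cleaner and avoids the analytic difficulties you anticipate. If you want to cite a proof rather than sketch one, simply defer to \cite{kim-groupscheme-relative} as the paper does.
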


Note that if $(\mathfrak{M}, \varphi_{\mathfrak{M}}) \in \mathrm{Kis}^1(\mathfrak{S})$, then $(\mathfrak{M}\otimes_{\mathfrak{S}}\mathcal{O}_{\mathcal{E}}, \varphi_{\mathfrak{M}}\otimes\varphi_{\mathcal{O}_{\mathcal{E}}})$ is a projective \'etale $(\varphi, \mathcal{O}_{\mathcal{E}})$-module since $1\otimes\varphi_{\mathfrak{M}}$ is injective and its cokernel is killed by $E(u)$ which is a unit in $\mathcal{O}_\mathcal{E}$. If $G_R$ is a $p$-divisible group over $R$, its Tate module is given by $T_p(G_R) \coloneqq \mathrm{Hom}_{\overline{R}}(\mathbf{Q}_p/\mathbf{Z}_p, G_R\times_R \overline{R})$, which is a finite free $\mathbf{Z}_p$-representation of $\mathcal{G}_R$. By \cite[Corollary 8.2]{kim-groupscheme-relative}, we have a natural $\mathcal{G}_{\tilde{R}_\infty}$-equivariant isomorphism $T^\vee(\mathfrak{M}^*(G_R)\otimes_{\mathfrak{S}}\mathcal{O}_\mathcal{E}) \cong T_p(G_R)$ where $T^\vee(\mathfrak{M}^*(G_R)\otimes_{\mathfrak{S}}\mathcal{O}_\mathcal{E})$ denotes the dual of $T(\mathfrak{M}^*(G_R)\otimes_{\mathfrak{S}}\mathcal{O}_\mathcal{E})$.

\section{Construction of Kisin modules} \label{sec:4}

Throughout this section, we assume $e< p-1$. We denote $\mathfrak{S}_n \coloneqq \mathfrak{S}/p^n\mathfrak{S}$ for positive integers $n \geq 1$. Let $T$ be a crystalline $\mathcal{G}_R$-representation which is free over $\mathbf{Z}_p$ of rank $d$ with Hodge-Tate weights in $[0, 1]$. Let $\mathcal{M} \coloneqq \mathcal{M}^\vee(T)$ be the associated \'etale $(\varphi, \mathcal{O}_{\mathcal{E}})$-module, where $\mathcal{M}^\vee(T)$ denotes the dual of $\mathcal{M}(T)$. For each integer $n \geq 1$, denote $\mathcal{M}_n = \mathcal{M}/p^n\mathcal{M}$. Note that $\mathcal{M}_n \cong \mathcal{M}^{\vee}(T/p^nT)$. On the other hand, consider the map $b_L: R \rightarrow \mathcal{O}_L$ as in Section \ref{sec:2.1}. $T$ is also a crystalline $\mathcal{G}_{\mathcal{O}_{L}}$-representation with Hodge-Tate weights in $[0, 1]$, so by \cite[Theorem 6.10]{brinon-trihan}, there exists a $p$-divisible group $G_{\mathcal{O}_L}$ over $\mathcal{O}_L$ such that $T_p(G_{\mathcal{O}_L}) \cong T$ as $\mathcal{G}_{\mathcal{O}_L}$-representations. Let $(\mathfrak{M}_{\mathcal{O}_L}, \nabla_{\mathfrak{M}_{\mathcal{O}_L}}) \coloneqq \mathfrak{M}^*(G_{\mathcal{O}_{L}}) \in \mathrm{Kis}^1(\mathfrak{S}_{\mathcal{O}_L}, \nabla)$ be the associated Kisin module over $\mathfrak{S}_{\mathcal{O}_L}$. Denote $\mathfrak{M}_{\mathcal{O}_L, n} = \mathfrak{M}_{\mathcal{O}_L}/p^n\mathfrak{M}_{\mathcal{O}_L}$. The map between the Galois groups $\mathcal{G}_{\mathcal{O}_{L}} \rightarrow \mathcal{G}_R$ restricts to $\mathcal{G}_{\widetilde{\mathcal{O}}_{L}, \infty} \rightarrow \mathcal{G}_{\tilde{R}_\infty}$. Hence, we have the natural isomorphism $\mathcal{M}\otimes_{\mathcal{O}_{\mathcal{E}}}\mathcal{O}_{\mathcal{E}, \mathcal{O}_L} \cong \mathfrak{M}_{\mathcal{O}_L}\otimes_{\mathfrak{S}_{\mathcal{O}_L}}\mathcal{O}_{\mathcal{E}, \mathcal{O}_L}$ of \'etale $(\varphi, \mathcal{O}_{\mathcal{E}, \mathcal{O}_L})$-modules. Let $\mathcal{M}_{\mathcal{O}_L} \coloneqq \mathcal{M}\otimes_{\mathcal{O}_{\mathcal{E}}}\mathcal{O}_{\mathcal{E}, \mathcal{O}_L}$ and $\mathcal{M}_{\mathcal{O}_L, n} \coloneqq \mathcal{M}_{\mathcal{O}_L}/p^n\mathcal{M}_{\mathcal{O}_L}$. 

For each $n \geq 1$, we define
\[
\mathfrak{M}_n \coloneqq \mathcal{M}_n \cap \mathfrak{M}_{\mathcal{O}_L, n}
\]
where the intersection is taken as $\mathfrak{S}$-submodules of $\mathcal{M}_{\mathcal{O}_L, n}$. The Frobenius endomorphisms on $\mathcal{M}_n$ and $\mathfrak{M}_{\mathcal{O}_L, n}$ induce a Frobenius endomorphism on $\mathfrak{M}_n$. Since the Frobenius on $\mathcal{M}_{\mathcal{O}_L, n}$ is injective, we have the injective $\mathfrak{S}$-module morphism
\[
1\otimes\varphi: \mathfrak{S}\otimes_{\varphi, \mathfrak{S}}\mathfrak{M}_n \rightarrow \mathfrak{M}_n
\]
for each $n$.

\begin{lem} \label{lem:4.1}
$\mathfrak{M}_n$ is a finitely generated $\mathfrak{S}_n$-module. Furthermore, we have $\varphi$-equivariant isomorphisms
\[
\mathfrak{M}_n \otimes_{\mathfrak{S}}\mathcal{O}_{\mathcal{E}} \cong \mathcal{M}_n
\]	
and
\[
\mathfrak{M}_n\otimes_{\mathfrak{S}}\mathfrak{S}_{\mathcal{O}_L} \cong \mathfrak{M}_{\mathcal{O}_L, n}.
\]
\end{lem}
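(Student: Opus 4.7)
The plan is to exploit the Cartesian identity of $\mathfrak{S}_n$-algebras
\[
\mathfrak{S}_n = \bigl(\mathcal{O}_{\mathcal{E}}/p^n\bigr) \cap \bigl(\mathfrak{S}_{\mathcal{O}_L}/p^n\bigr) \quad \text{inside } \mathcal{O}_{\mathcal{E},\mathcal{O}_L}/p^n,
\]
apply flat base change to obtain the two stated isomorphisms, and then extract finite generation by a sandwich argument using Noetherianness of $\mathfrak{S}_n$. To establish the Cartesian identity, I would first verify that $R_0/p^n \hookrightarrow \mathcal{O}_{L_0}/p^n$ is injective: since $R_0/pR_0$ is a domain, $p$ is prime in $R_0$, so $p^n R_{0,(p)} \cap R_0 = p^n R_0$; and flatness of the completion $R_{0,(p)} \to \mathcal{O}_{L_0}$ gives $p^n \mathcal{O}_{L_0} \cap R_{0,(p)} = p^n R_{0,(p)}$. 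A coefficient-wise inspection of Laurent series in $u$ then upgrades this injectivity to the stated Cartesian identity.

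Next I would check the base-change isomorphisms. Projectivity of $\mathcal{M}_n$ over $\mathcal{O}_{\mathcal{E}}/p^n$ and of $\mathfrak{M}_{\mathcal{O}_L,n}$ over $\mathfrak{S}_{\mathcal{O}_L}/p^n$, combined with injectivity of the associated ring maps, ensures that $\mathcal{M}_n$ and $\mathfrak{M}_{\mathcal{O}_L,n}$ both embed into $\mathcal{M}_{\mathcal{O}_L,n}$, so $\mathfrak{M}_n$ is a well-defined $\mathfrak{S}_n$-submodule. I then form the left-exact sequence
\[
0 \to \mathfrak{M}_n \to \mathcal{M}_n \oplus \mathfrak{M}_{\mathcal{O}_L,n} \xrightarrow{(x,y)\mapsto x-y} \mathcal{M}_{\mathcal{O}_L,n}
\]
and tensor over $\mathfrak{S}_n$ with $\mathcal{O}_{\mathcal{E}}/p^n$, which is flat (as a $u$-localization). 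Using $\mathcal{M}_n \otimes_{\mathfrak{S}_n} \mathcal{O}_{\mathcal{E}}/p^n \cong \mathcal{M}_n$ and $\mathfrak{M}_{\mathcal{O}_L,n} \otimes_{\mathfrak{S}_n} \mathcal{O}_{\mathcal{E}}/p^n \cong \mathcal{M}_{\mathcal{O}_L,n}$, the kernel of the tensored difference map is the graph of $\mathcal{M}_n \hookrightarrow \mathcal{M}_{\mathcal{O}_L,n}$, yielding $\mathfrak{M}_n \otimes_{\mathfrak{S}} \mathcal{O}_{\mathcal{E}} \cong \mathcal{M}_n$. Tensoring instead with $\mathfrak{S}_{\mathcal{O}_L}/p^n$, which is flat because $R_0/p^n \to \mathcal{O}_{L_0}/p^n$ is flat, analogously gives $\mathfrak{M}_n \otimes_{\mathfrak{S}} \mathfrak{S}_{\mathcal{O}_L} \cong \mathfrak{M}_{\mathcal{O}_L,n}$.

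For finite generation I would sandwich $\mathfrak{M}_n$ inside a finitely generated $\mathfrak{S}_n$-module. Locally on $\mathrm{Spec}(\mathcal{O}_{\mathcal{E}}/p^n)$, pick an $\mathcal{O}_{\mathcal{E}}/p^n$-basis $\{e_i\}_{i=1}^d$ of $\mathcal{M}_n$ and an $\mathfrak{S}_{\mathcal{O}_L}/p^n$-basis $\{f_j\}_{j=1}^d$ of $\mathfrak{M}_{\mathcal{O}_L,n}$. The change-of-basis matrix $B$ with $e_i = \sum_j B_{ij} f_j$ and its inverse have entries in $\mathcal{O}_{\mathcal{E},\mathcal{O}_L}/p^n$, each of which has finite negative $u$-degree, so some $u^M B$ and $u^M B^{-1}$ lie in $M_d(\mathfrak{S}_{\mathcal{O}_L}/p^n)$. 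Writing $\sum x_i e_i \in \mathfrak{M}_n$ as $x^T B \in (\mathfrak{S}_{\mathcal{O}_L}/p^n)^d$ and multiplying by $B^{-1}$, the Cartesian identity $\mathcal{O}_{\mathcal{E}}/p^n \cap \mathfrak{S}_{\mathcal{O}_L}/p^n = \mathfrak{S}_n$ forces $u^M x_i \in \mathfrak{S}_n$, so $\mathfrak{M}_n \subset (u^{-M}\mathfrak{S}_n)^d$. Noetherianness of $\mathfrak{S}_n = (R_0/p^n)[\![u]\!]$ (inherited from $R_0$) then implies $\mathfrak{M}_n$ is finitely generated.

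The main obstacle is the ring-theoretic Cartesian identity in the first paragraph: it is there that the precise hypotheses on $R_0$ (domain modulo $p$, Noetherian, presence of the localization yielding $\mathcal{O}_{L_0}$) are genuinely used, and without it neither the base-change argument nor the sandwich argument goes through. Once the Cartesian identity is in hand, the remainder is formal flat base change together with a standard Noetherian finiteness argument.
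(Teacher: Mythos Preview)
Your proposal is correct and follows essentially the same route as the paper: the Cartesian identity $\mathfrak{S}_n = (\mathcal{O}_{\mathcal{E}}/p^n) \cap (\mathfrak{S}_{\mathcal{O}_L}/p^n)$, flat base change for the two isomorphisms, and a change-of-basis bounding argument together with Noetherianness of $\mathfrak{S}_n$ for finite generation are exactly the ingredients the paper uses (only in a different order---the paper proves finiteness first). The one imprecision is your ``locally pick a basis'' for the merely projective module $\mathcal{M}_n$; the paper handles this globally by inverting a single non-zero divisor $g \in \mathfrak{S}_n$ so that $\mathcal{M}_n[\frac{1}{g}]$ becomes free, then choosing the basis $\{f_i\}$ so that $\mathcal{M}_n \subset \mathfrak{N}[\frac{1}{u}]$ with $\mathfrak{N} = \bigoplus_i \mathfrak{S}_n f_i$, whence the bound $\mathfrak{M}_n \subset u^{-h}\mathfrak{N}$ is valid over all of $\mathfrak{S}_n$.
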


\begin{proof}
We first prove that $\mathfrak{M}_n$ is finite over $\mathfrak{S}_n$. Note that $\mathfrak{M}_{\mathcal{O}_L, n}$ is free over $\mathfrak{S}_{\mathcal{O}_L, n}$ of rank $d$, and choose a basis $\{e_1, \ldots, e_d\}$ of $\mathfrak{M}_{\mathcal{O}_L, n}$. On the other hand, since $\mathcal{M}_n$ is projective over $\mathfrak{S}_n[\frac{1}{u}]$ of rank $d$, there exists a non-zero divisor $g \in \mathfrak{S}_n$ such that $\mathcal{M}_n[\frac{1}{g}]$ is free of rank $d$ over $\mathfrak{S}_n[\frac{1}{u}][\frac{1}{g}]$. Since $\mathcal{M}_n$ is finite over $\mathfrak{S}_n[\frac{1}{u}]$, we can choose a basis $\{f_1, \ldots, f_d\}$ of $\mathcal{M}_n[\frac{1}{g}]$ over $\mathfrak{S}_n[\frac{1}{u}][\frac{1}{g}]$ such that letting $\mathfrak{N}$ to be the $\mathfrak{S}_n$-submodule of $\mathcal{M}_n[\frac{1}{g}]$ generated by $f_1, \ldots, f_d$, we have $\mathcal{M}_n \subset \mathfrak{N}[\frac{1}{u}]$ as $\mathfrak{S}_n[\frac{1}{u}]$-modules. It suffices to show that $\mathfrak{M}_n \subset \frac{1}{u^h} \cdot \mathfrak{N}$ as $\mathfrak{S}_n$-modules for some integer $h \geq 1$. We have 
\[
(f_1, \ldots, f_d)^{t} = A\cdot (e_1, \ldots, e_d)^{t}
\]  
where $A$ is an invertible $d \times d$ matrix with entries in $\mathfrak{S}_{\mathcal{O}_L, n}[\frac{1}{u}][\frac{1}{g}]$. Consider the intersection $\mathfrak{N}[\frac{1}{u}] \cap \mathfrak{M}_{\mathcal{O}_L, n}$ as submodules of $\mathfrak{M}_{\mathcal{O}_L, n}[\frac{1}{u}][\frac{1}{g}]$. For an element $x = b_1 f_1+\cdots+b_d f_d \in \mathfrak{N}[\frac{1}{u}]$ with $b_1, \ldots, b_d \in \mathfrak{S}_n[\frac{1}{u}]$, we have $x \in \mathfrak{M}_{\mathcal{O}_L, n}$ if and only if
\[
(b_1, \ldots, b_d)\cdot A = (c_1, \ldots, c_d)
\]          	
for some $c_1, \ldots, c_d \in \mathfrak{S}_{\mathcal{O}_{L}, n}$. Then $(b_1, \ldots, b_d) = A^{-1}(c_1, \ldots, c_d)$, which implies that $\mathfrak{N}[\frac{1}{u}] \cap \mathfrak{M}_{\mathcal{O}_L, n} \subset \frac{1}{u^h} \cdot \mathfrak{N}$ as $\mathfrak{S}_n$-modules for some integer $h \geq 1$. Since $\mathfrak{M}_n \subset \mathfrak{N}[\frac{1}{u}] \cap \mathfrak{M}_{\mathcal{O}_L, n}$, this shows the first statement. 

We have
\[
\mathfrak{M}_n \otimes_{\mathfrak{S}}\mathcal{O}_{\mathcal{E}} \cong \mathfrak{M}_n[\frac{1}{u}] \cong \mathcal{M}_n \cap \mathcal{M}_{\mathcal{O}_L, n} = \mathcal{M}_n
\]
and hence the first isomorphism. On the other hand, since $\mathfrak{S} \rightarrow \mathfrak{S}_{\mathcal{O}_L}$ is flat and $\mathfrak{M}_{\mathcal{O}_L, n}$ is finite free over $\mathfrak{S}_{\mathcal{O}_L, n}$, we have
\[
\begin{split}
\mathfrak{M}_n\otimes_{\mathfrak{S}}\mathfrak{S}_{\mathcal{O}_L} & \cong (\mathcal{M}_n\otimes_{\mathfrak{S}}\mathfrak{S}_{\mathcal{O}_L}) \cap (\mathfrak{M}_{\mathcal{O}_L, n}\otimes_{\mathfrak{S}}\mathfrak{S}_{\mathcal{O}_L}) = \mathcal{M}_{\mathcal{O}_L, n} \cap (\mathfrak{M}_{\mathcal{O}_L, n}\otimes_{\mathfrak{S}}\mathfrak{S}_{\mathcal{O}_L})\\
& \cong (\mathfrak{M}_{\mathcal{O}_L, n}\otimes_{\mathfrak{S}_n}\mathfrak{S}_n[\frac{1}{u}]) \cap (\mathfrak{M}_{\mathcal{O}_L, n}\otimes_{\mathfrak{S}_n}\mathfrak{S}_{\mathcal{O}_L, n}) \cong \mathfrak{M}_{\mathcal{O}_L, n}
\end{split}
\]
by $\mathfrak{S}_n[\frac{1}{u}] \cap \mathfrak{S}_{\mathcal{O}_L, n} = \mathfrak{S}_n$.
\end{proof}

\begin{lem} \label{lem:4.2}
The cokernel of the $\mathfrak{S}$-module map $1\otimes\varphi: \mathfrak{S}\otimes_{\varphi, \mathfrak{S}}\mathfrak{M}_n \rightarrow \mathfrak{M}_n$ is killed by $E(u)$. 	
\end{lem}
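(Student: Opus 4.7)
The plan is to promote the two isomorphisms in Lemma \ref{lem:4.1} -- $\mathfrak{M}_n \otimes_\mathfrak{S} \mathcal{O}_\mathcal{E} \cong \mathcal{M}_n$ and $\mathfrak{M}_n \otimes_\mathfrak{S} \mathfrak{S}_{\mathcal{O}_L} \cong \mathfrak{M}_{\mathcal{O}_L, n}$ -- to their $\varphi$-pullbacks, so that the \'etaleness of $\mathcal{M}_n$ over $\mathcal{O}_\mathcal{E}$ and the height $1$ Kisin module structure of $\mathfrak{M}_{\mathcal{O}_L, n}$ over $\mathfrak{S}_{\mathcal{O}_L}$ can be glued through the defining identity $\mathfrak{M}_n = \mathcal{M}_n \cap \mathfrak{M}_{\mathcal{O}_L, n}$.

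The first step I would carry out is the flatness of $\varphi : \mathfrak{S} \to \mathfrak{S}$. The finite $p$-basis $\{t_1, \dots, t_m\}$ of $R_0/pR_0$, combined with the fact that $\varphi$ lifts the absolute Frobenius modulo $p$, makes $R_0$ free over $\varphi(R_0)$; together with $\varphi(u) = u^p$ this makes $\mathfrak{S} = R_0[\![u]\!]$ free over $\varphi(\mathfrak{S})$. Consequently the pullback $\varphi^*$ is an exact functor on $\mathfrak{S}$-modules.

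Next I would rewrite Lemma \ref{lem:4.1} as the left-exact sequence of $\mathfrak{S}$-modules
\[
0 \to \mathfrak{M}_n \to \mathcal{M}_n \oplus \mathfrak{M}_{\mathcal{O}_L, n} \to \mathcal{M}_{\mathcal{O}_L, n},
\]
with first map $x \mapsto (x,x)$ and second map $(a,b) \mapsto a-b$, and apply the exact functor $\varphi^*$ to obtain
\[
\varphi^* \mathfrak{M}_n \;=\; \varphi^* \mathcal{M}_n \,\cap\, \varphi^* \mathfrak{M}_{\mathcal{O}_L, n}
\]
as submodules of $\varphi^* \mathcal{M}_{\mathcal{O}_L, n}$. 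Now for any $x \in \mathfrak{M}_n$, the Kisin height $1$ property of $\mathfrak{M}_{\mathcal{O}_L, n}$ produces $y \in \varphi^* \mathfrak{M}_{\mathcal{O}_L, n}$ with $(1 \otimes \varphi)(y) = E(u) x$, and the \'etaleness of $\mathcal{M}_n$ produces $z \in \varphi^* \mathcal{M}_n$ with $(1 \otimes \varphi)(z) = E(u) x$. Since $1 \otimes \varphi$ is an isomorphism (hence injective) on $\varphi^* \mathcal{M}_{\mathcal{O}_L, n}$, the two preimages must coincide: $y = z$ in $\varphi^* \mathcal{M}_{\mathcal{O}_L, n}$. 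By the previous display, this common element lies in $\varphi^* \mathfrak{M}_n$, which realizes $E(u) x$ as an element of $(1 \otimes \varphi)(\varphi^* \mathfrak{M}_n)$ and proves the claim.

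The only step carrying any real content is the flatness of $\varphi : \mathfrak{S} \to \mathfrak{S}$, which I expect to be the main obstacle; everything downstream is purely formal given Lemma \ref{lem:4.1} and the standard \'etale and Kisin inputs. If checking the freeness of $R_0$ over $\varphi(R_0)$ in the full generality of Section \ref{sec:2.1} is delicate, a fallback is to deduce the weaker statement that $\mathrm{coker}(1 \otimes \varphi)$ vanishes after tensoring with $\mathcal{O}_\mathcal{E}$ and is $E(u)$-killed after tensoring with $\mathfrak{S}_{\mathcal{O}_L}$, and then use the injectivity of $\mathfrak{M}_n$ into the relevant product to conclude $E(u)$-annihilation on the nose.
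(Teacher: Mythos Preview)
Your proposal is correct and essentially the same as the paper's proof: both produce preimages of $E(u)x$ in $\varphi^*\mathcal{M}_n$ and in $\varphi^*\mathfrak{M}_{\mathcal{O}_L,n}$, identify them by injectivity in the ambient module, and then use flatness of $\varphi:\mathfrak{S}\to\mathfrak{S}$ to pass from the intersection to $\varphi^*\mathfrak{M}_n$. The only point the paper makes more explicit is the identification $\mathfrak{S}\otimes_{\varphi,\mathfrak{S}}\mathfrak{M}_{\mathcal{O}_L,n}\cong\mathfrak{S}_{\mathcal{O}_L}\otimes_{\varphi,\mathfrak{S}_{\mathcal{O}_L}}\mathfrak{M}_{\mathcal{O}_L,n}$ (via the shared $p$-basis), which you use implicitly when invoking the Kisin height~$1$ property to land in $\varphi^*\mathfrak{M}_{\mathcal{O}_L,n}=\mathfrak{S}\otimes_{\varphi,\mathfrak{S}}\mathfrak{M}_{\mathcal{O}_L,n}$; for flatness the paper simply cites \cite[Lemma 7.1.8]{brinon-relative}.
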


\begin{proof}
Let $x \in \mathfrak{M}_n$. There exists a unique $y_1 \in \mathcal{O}_{\mathcal{E}}\otimes_{\varphi, \mathcal{O}_{\mathcal{E}}}\mathcal{M}_n \cong \mathfrak{S}\otimes_{\varphi, \mathfrak{S}} \mathcal{M}_n$ such that $(1\otimes\varphi)(y_1) = E(u)x$. On the other hand, there exists a unique $y_2 \in \mathfrak{S}_{\mathcal{O}_L}\otimes_{\varphi, \mathfrak{S}_{\mathcal{O}_L}}\mathfrak{M}_{\mathcal{O}_L, n}$ such that $(1\otimes\varphi)(y_2) = E(u)x$. Then we have $y_1 = y_2 \in (\mathfrak{S}\otimes_{\varphi, \mathfrak{S}}\mathcal{M}_n) \cap (\mathfrak{S}_{\mathcal{O}_L}\otimes_{\varphi, \mathfrak{S}_{\mathcal{O}_L}}\mathfrak{M}_{\mathcal{O}_L, n})$.

Since $\mathcal{O}_{L_0}/p\mathcal{O}_{L_0}$ has a finite $p$-basis given by $t_1, \ldots, t_m \in R_0/pR_0$ which also gives a $p$-basis of $R_0/pR_0$, the natural map $\mathfrak{S}\otimes_{\varphi, \mathfrak{S}}\mathfrak{M}_{\mathcal{O}_L, n} \rightarrow \mathfrak{S}_{\mathcal{O}_L}\otimes_{\varphi, \mathfrak{S}_{\mathcal{O}_L}}\mathfrak{M}_{\mathcal{O}_L, n}$ is an isomorphism. Hence, 
\[
y_1 \in (\mathfrak{S}\otimes_{\varphi, \mathfrak{S}}\mathcal{M}_n) \cap (\mathfrak{S}\otimes_{\varphi, \mathfrak{S}}\mathfrak{M}_{\mathcal{O}_L, n}) \cong \mathfrak{S}\otimes_{\varphi, \mathfrak{S}} (\mathcal{M}_n \cap \mathfrak{M}_{\mathcal{O}_L, n}) =\mathfrak{S}\otimes_{\varphi, \mathfrak{S}} \mathfrak{M}_n.
\]	
since $\varphi: \mathfrak{S} \rightarrow \mathfrak{S}$ is flat by \cite[Lemma 7.1.8]{brinon-relative}. This proves the assertion.
\end{proof}

\noindent For any finite $\mathfrak{S}$-module $\mathfrak{N}$ equipped with a $\varphi$-semilinear endomorphism $\varphi: \mathfrak{N} \rightarrow \mathfrak{N}$, say $\mathfrak{N}$ has $E(u)$-\textit{height} $\leq 1$ if there exists a $\mathfrak{S}$-module map $\psi: \mathfrak{N} \rightarrow \varphi^*\mathfrak{N} = \mathfrak{S}\otimes_{\varphi, \mathfrak{S}}\mathfrak{N}$ such that the composite
\[
\varphi^*\mathfrak{N} \stackrel{1\otimes\varphi}{\longrightarrow} \mathfrak{N} \stackrel{\psi}{\rightarrow} \varphi^*\mathfrak{N}
\] 
is $E(u)\cdot \mathrm{Id}_{\varphi^*\mathfrak{N}}$. By Lemma \ref{lem:4.2}, $\mathfrak{M}_n$ has $E(u)$-height $\leq 1$.

For each maximal ideal $\mathfrak{q} \in \mathrm{mSpec} R_0$, consider $b_{\mathfrak{q}}: R \rightarrow R_{\mathfrak{q}}$ as in Section \ref{sec:2.1}. By choosing a common geometric point, we have the induced map of Galois groups $\mathcal{G}_{R_{\mathfrak{q}}} \rightarrow \mathcal{G}_R$ which ristricts to $\mathcal{G}_{\widetilde{R}_{\mathfrak{q}, \infty}} \rightarrow \mathcal{G}_{\tilde{R}, \infty}$, and $T$ is a crystalline $\mathcal{G}_{R_{\mathfrak{q}}}$-representation with Hodge-Tate weights in $[0, 1]$. Denote $\mathfrak{S}_{\mathfrak{q}} \coloneqq \widehat{R_{0, \mathfrak{q}}}[\![u]\!]$. 

\begin{prop} \label{prop:4.3}
For each integer $n \geq 1$, $\mathfrak{M}_n$ is projective over $\mathfrak{S}_n$ of rank $d$.	
\end{prop}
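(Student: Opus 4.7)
The plan is to establish projectivity of $\mathfrak{M}_n$ over $\mathfrak{S}_n$ by testing flatness along a faithfully flat cover, and then reducing each localized factor to the classical discrete valuation ring case treated by Brinon--Trihan via the base change maps $b_\mathfrak{q}:R\to R_\mathfrak{q}$.

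\textbf{Step 1: faithfully flat covering.} Since $\mathfrak{M}_n$ is finitely generated over the Noetherian ring $\mathfrak{S}_n$ by Lemma \ref{lem:4.1}, projectivity of rank $d$ is equivalent to flatness together with the rank-$d$ identification $\mathfrak{M}_n\otimes_{\mathfrak{S}_n}\mathfrak{S}_n[1/u]\cong\mathcal{M}_n$, and flatness can be detected after faithfully flat base change. Every prime of $\mathfrak{S}_n$ either fails to contain $u$, and lifts to $\mathfrak{S}_n[1/u]$, or lies inside a maximal ideal $(\mathfrak{q},u)$ for some maximal ideal $\mathfrak{q}\subset R_0$ containing $p$, and lifts to $\mathfrak{S}_{\mathfrak{q},n}$ via the faithfully flat completion $(\mathfrak{S}_n)_{(\mathfrak{q},u)}\to\mathfrak{S}_{\mathfrak{q},n}$. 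Hence
\[
\mathfrak{S}_n\;\longrightarrow\;\mathfrak{S}_n[1/u]\,\times\,\prod_{\mathfrak{q}\in\mathrm{mSpec}\,R_0,\,p\in\mathfrak{q}}\mathfrak{S}_{\mathfrak{q},n}
\]
is faithfully flat. Since the first factor is already projective of rank $d$ by Lemma \ref{lem:4.1}, it remains to show $\mathfrak{M}_n\otimes_{\mathfrak{S}}\mathfrak{S}_\mathfrak{q}$ is free of rank $d$ over the local ring $\mathfrak{S}_{\mathfrak{q},n}$ for every such $\mathfrak{q}$.

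\textbf{Step 2: identifying the base change with an analogous construction over $R_\mathfrak{q}$.} For each such $\mathfrak{q}$ I would repeat the construction of $\mathfrak{M}_n$ with $R$ replaced by $R_\mathfrak{q}$: set $\mathfrak{M}_n^\mathfrak{q}:=\mathcal{M}_n^\mathfrak{q}\cap\mathfrak{M}_{\mathcal{O}_L^\mathfrak{q},n}^\mathfrak{q}$, where $\mathcal{O}_L^\mathfrak{q}$ is the $p$-adic completion of $(\widehat{R_{0,\mathfrak{q}}})_{(p)}$ and the superscript $\mathfrak{q}$ denotes the base change under $b_\mathfrak{q}$. Using $\varphi$-equivariance of $b_\mathfrak{q}$, functoriality of $\mathcal{M}(\cdot)$ from Section \ref{sec:2.2} and of $\mathfrak{M}^\ast(\cdot)$ from Theorem \ref{thm:3.3}, and flatness of $\mathfrak{S}\to\mathfrak{S}_\mathfrak{q}$, I would verify $\mathfrak{M}_n\otimes_\mathfrak{S}\mathfrak{S}_\mathfrak{q}\cong\mathfrak{M}_n^\mathfrak{q}$ by the same intersection argument as in Lemma \ref{lem:4.1}. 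The problem reduces to showing $\mathfrak{M}_n^\mathfrak{q}$ is free of rank $d$.

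\textbf{Step 3: the local case.} Since $\widehat{R_{0,\mathfrak{q}}}\cong\mathcal{O}_\mathfrak{q}[\![s_1,\ldots,s_l]\!]$ is a complete regular local ring, $\mathfrak{S}_{\mathfrak{q},n}$ is local, so freeness reduces to flatness and the correct rank. When $l=0$, $R_\mathfrak{q}$ is a complete discrete valuation ring with residue field of finite $p$-basis and ramification index $e<p-1$: \cite[Theorem 6.10]{brinon-trihan} produces a $p$-divisible group over $R_\mathfrak{q}$ with Tate module $T$, and Theorem \ref{thm:3.3} provides a free Kisin module of rank $d$ whose mod-$p^n$ reduction must agree with $\mathfrak{M}_n^\mathfrak{q}$ by the functoriality in Step 2 combined with the uniqueness implicit in the intersection defining $\mathfrak{M}_n^\mathfrak{q}$.

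The main obstacle will be Step 3 in the case $l\geq 1$: here $\widehat{R_{0,\mathfrak{q}}}$ is a higher-dimensional formal power series ring, outside the scope of \cite{brinon-trihan}, and killing the $s_i$ is not a flat operation, so one cannot reduce freeness to the DVR case by such a quotient. I anticipate that resolving this requires genuinely exploiting the $E(u)$-height $\leq 1$ condition from Lemma \ref{lem:4.2} and the small-ramification hypothesis $e<p-1$, likely in combination with the Vasiu--Zink purity result \cite{vasiu-zink-purity} mentioned in the introduction.
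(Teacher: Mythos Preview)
Your Step 1 reduction to checking freeness of $\mathfrak{M}_n\otimes_{\mathfrak{S}}\mathfrak{S}_{\mathfrak{q}}$ at each maximal ideal $\mathfrak{q}$ is correct and matches the paper's framework. However, Step 2 is an unnecessary detour: the paper does not attempt to identify $\mathfrak{N}_n\coloneqq\mathfrak{M}_n\otimes_{\mathfrak{S}}\mathfrak{S}_{\mathfrak{q}}$ with an intersection construction over $R_{\mathfrak{q}}$, nor does it invoke Brinon--Trihan a second time. It works directly with $\mathfrak{N}_n$ as a finite $\mathfrak{S}_{\mathfrak{q},n}$-module carrying the $E(u)$-height $\leq 1$ structure inherited from Lemma \ref{lem:4.2}.

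The genuine gap is that you stop at exactly the point where the argument begins. You correctly sense that the $E(u)$-height condition and $e<p-1$ are the key inputs, but your guess that Vasiu--Zink purity enters here is wrong (that result is used only later, in Proposition \ref{prop:5.2}). The paper's argument is entirely elementary and handles all $l\geq 0$ uniformly: reduce $\mathfrak{N}_n$ modulo the maximal ideal $(p,s_1,\ldots,s_l)$ of $\mathfrak{S}_{\mathfrak{q}}$ to obtain a finite module $\overline{\mathfrak{N}}_n$ over the PID $k_{\mathfrak{q}}[\![u]\!]$, still equipped with maps $1\otimes\varphi$ and $\psi$ whose composite is $u^e\cdot\mathrm{Id}$. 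Split $\overline{\mathfrak{N}}_n$ into its free and $u$-torsion parts; the Frobenius preserves the torsion part. If the torsion part is $\bigoplus_{i=1}^b k_{\mathfrak{q}}[\![u]\!]/(u^{a_i})$ with $a=\sum a_i$, passing to the top exterior power forces $u^{(p-1)a}\mid u^{eb}$, which contradicts $e<p-1$ and $a\geq b$ unless the torsion vanishes. Then $\overline{\mathfrak{N}}_n$ is free of rank $d$ (the rank coming from $\mathcal{M}_n$), and Nakayama lifts this to freeness of $\mathfrak{N}_n$ over the local ring $\mathfrak{S}_{\mathfrak{q},n}$, with injectivity of the surjection checked after inverting $u$.
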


\begin{proof}
Let $\mathfrak{q}$ be a maximal ideal of $R_0$, and let $\mathfrak{N}_{n} \coloneqq \mathfrak{M}_n\otimes_{\mathfrak{S}}\mathfrak{S}_{\mathfrak{q}}$ equipped with the induced Frobenius endomorphism. Then we have the induced $\mathfrak{S}_{\mathfrak{q}}$-linear map $\psi: \mathfrak{N}_{n} \rightarrow \mathfrak{S}_{\mathfrak{q}}\otimes_{\varphi, \mathfrak{S}_{\mathfrak{q}}}\mathfrak{N}_{n}$ such that the composite 
\[
\mathfrak{S}_{\mathfrak{q}}\otimes_{\varphi, \mathfrak{S}_{\mathfrak{q}}}\mathfrak{N}_n \stackrel{1\otimes\varphi}{\longrightarrow} \mathfrak{N}_{n} \stackrel{\psi}{\rightarrow} \mathfrak{S}_{\mathfrak{q}}\otimes_{\varphi, \mathfrak{S}_{\mathfrak{q}}}\mathfrak{N}_{n}
\]	
is $E(u)\cdot \mathrm{Id}$. For the isomorphism $\widehat{R_{0, \mathfrak{q}}} \cong \mathcal{O}_{\mathfrak{q}}[\![s_1, \ldots, s_l]\!]$ as above, consider the projection $\mathfrak{S}_{\mathfrak{q}} \rightarrow \mathfrak{S}_{\mathfrak{q}}/(p, s_1, \ldots, s_l) \cong k_{\mathfrak{q}}[\![u]\!]$ where $k_{\mathfrak{q}}\coloneqq \mathcal{O}_{\mathfrak{q}}/(p)$. Denote $\overline{\mathfrak{N}}_n = \mathfrak{N}_n\otimes_{\mathfrak{S}_{\mathfrak{q}}}k_{\mathfrak{q}}[\![u]\!]$ equipped with the induced Frobenius. Then we have the induced $k_{\mathfrak{q}}[\![u]\!]$-linear map $\psi: \overline{\mathfrak{N}}_n \rightarrow k_{\mathfrak{q}}[\![u]\!]\otimes_{\varphi, k_{\mathfrak{q}}[\![u]\!]}\overline{\mathfrak{N}}_n$ such that the composite
\[
k_{\mathfrak{q}}[\![u]\!]\otimes_{\varphi, k_{\mathfrak{q}}[\![u]\!]}\overline{\mathfrak{N}}_n \stackrel{1\otimes\varphi}{\longrightarrow} \overline{\mathfrak{N}}_n \stackrel{\psi}{\rightarrow} k_{\mathfrak{q}}[\![u]\!]\otimes_{\varphi, k_{\mathfrak{q}}[\![u]\!]}\overline{\mathfrak{N}}_n
\] 
is $u^e\cdot \mathrm{Id}$. Since $k_{\mathfrak{q}}[\![u]\!]$ is a principal ideal domain, $\overline{\mathfrak{N}}_n$ is a direct sum of its free part and $u$-torsion part $\overline{\mathfrak{N}}_n \cong \overline{\mathfrak{N}}_{n, \mathrm{free}}\oplus \overline{\mathfrak{N}}_{n, \mathrm{tor}}$ as $k_{\mathfrak{q}}[\![u]\!]$-modules. Furthermore, $\varphi$ maps $\overline{\mathfrak{N}}_{n, \mathrm{tor}}$ into $\overline{\mathfrak{N}}_{n, \mathrm{tor}}$, and hence the above maps induce 
\[
k_{\mathfrak{q}}[\![u]\!]\otimes_{\varphi, k_{\mathfrak{q}}[\![u]\!]}\overline{\mathfrak{N}}_{n, \mathrm{tor}} \stackrel{1\otimes\varphi}{\longrightarrow} \overline{\mathfrak{N}}_{n, \mathrm{tor}} \stackrel{\psi}{\rightarrow} k_{\mathfrak{q}}[\![u]\!]\otimes_{\varphi, k_{\mathfrak{q}}[\![u]\!]}\overline{\mathfrak{N}}_{n, \mathrm{tor}}
\]
whose composite is $u^e\cdot \mathrm{Id}$. 

We claim that $\overline{\mathfrak{N}}_{n, \mathrm{tor}} = 0$. Suppose otherwise. Then $\overline{\mathfrak{N}}_{n, \mathrm{tor}} \cong \bigoplus_{i=1}^b k_{\mathfrak{q}}[\![u]\!]/(u^{a_i})$ for some integers $a_i \geq 1$, and $k_{\mathfrak{q}}[\![u]\!]\otimes_{\varphi, k_{\mathfrak{q}}[\![u]\!]}\overline{\mathfrak{N}}_{n, \mathrm{tor}} \cong \bigoplus_{i=1}^b k_{\mathfrak{q}}[\![u]\!]/(u^{pa_i})$. By taking the appropriate wedge product and letting $a = a_1+\ldots+a_b$, the above maps induce the map of $k_{\mathfrak{q}}[\![u]\!]$-modules
\[
k_{\mathfrak{q}}[\![u]\!]/(u^{pa}) \stackrel{1\otimes\varphi}{\longrightarrow} k_{\mathfrak{q}}[\![u]\!]/(u^{a}) \stackrel{\psi}{\rightarrow} k_{\mathfrak{q}}[\![u]\!]/(u^{pa})
\]
whose composite is equal to $u^{eb}\cdot \mathrm{Id}$. Let $(1\otimes\varphi)(1) = f(u) \in k_{\mathfrak{q}}[\![u]\!]/(u^{a})$, and $\psi(1) = h(u) \in k_{\mathfrak{q}}[\![u]\!]/(u^{pa})$. Then $u^{pa} \mid u^ah(u)$, so $u^{(p-1)a} \mid h(u)$. On the other hand, $f(u)h(u) = u^{eb}$ in $k_{\mathfrak{q}}[\![u]\!]/(u^{pa})$. This implies $u^{(p-1)a} \mid u^{eb}$. But $e < p-1$ and $a \geq b$, so we get a contradiction. Hence, $\overline{\mathfrak{N}}_{n, \mathrm{tor}} = 0$ and $\overline{\mathfrak{N}}_n$ is free over $k_{\mathfrak{q}}[\![u]\!]$ of rank $d$, since by Lemma \ref{lem:4.1} $\overline{\mathfrak{N}}_n[\frac{1}{u}] \cong (\mathcal{M}_n\otimes_{\mathfrak{S}}\mathfrak{S}_{\mathfrak{q}})\otimes_{\mathfrak{S}_{\mathfrak{q}}}k_{\mathfrak{q}}[\![u]\!]$ which is projective over $k_{\mathfrak{q}}(\!(u)\!)$ of rank $d$. Let $b_1, \ldots, b_d \in \mathfrak{N}_n$ be a lift of a basis elements of $\overline{\mathfrak{N}}_n$. By Nakayama's lemma, we have a surjection of $\mathfrak{S}_{\mathfrak{q, n}}$-modules
\[
f: \bigoplus_{i = 1}^d \mathfrak{S}_{\mathfrak{q}, n}\cdot e_i \twoheadrightarrow \mathfrak{N}_n
\] 
given by $e_i \mapsto b_i$. Since $\mathfrak{N}_n[\frac{1}{u}] \cong \mathcal{M}_n\otimes_{\mathfrak{S}}\mathfrak{S}_{\mathfrak{q}}$ is projective over $\mathfrak{S}_{\mathfrak{q}, n}[\frac{1}{u}]$ of rank $d$, $f$ is also injective. Thus, $\mathfrak{N}_n = \mathfrak{M}_n\otimes_{\mathfrak{S}}\mathfrak{S}_{\mathfrak{q}}$ is projective over $\mathfrak{S}_{\mathfrak{q}, n}$ of rank $d$. Since this holds for every $\mathfrak{q} \in \mathrm{mSpec} R_0$, it proves the assertion.  
\end{proof}

\begin{lem} \label{lem:4.4}
Let $\mathfrak{N}$ and $\mathfrak{N}'$ be finite $u$-torsion free $\mathfrak{S}$-modules equipped with Frobenius endomorphisms such that $\mathfrak{N}[\frac{1}{u}]$ and $\mathfrak{N}'[\frac{1}{u}]$ are torsion \'etale $\varphi$-modules. Suppose that $\mathfrak{N}$ and $\mathfrak{N}'$ have $E(u)$-height $\leq 1$ and $\mathfrak{N}[\frac{1}{u}] = \mathfrak{N}'[\frac{1}{u}]$ as \'etale $\varphi$-modules. Then $\mathfrak{N} = \mathfrak{N}'$. 	
\end{lem}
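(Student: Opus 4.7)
Form the common enlargement $\mathfrak{N}'' \coloneqq \mathfrak{N} + \mathfrak{N}'$ inside $M \coloneqq \mathfrak{N}[\tfrac{1}{u}] = \mathfrak{N}'[\tfrac{1}{u}]$. I will show that $\mathfrak{N}''$ also has $E(u)$-height $\leq 1$ and that $\mathfrak{N}''/\mathfrak{N} = 0$; by symmetry this yields $\mathfrak{N}'' = \mathfrak{N}'$ and hence $\mathfrak{N} = \mathfrak{N}'$. Because $E(u)$ is a unit in $\mathcal{O}_\mathcal{E}$ and $M$ is \'etale, $1\otimes\varphi_M: \varphi^*M \to M$ is an isomorphism, so there is a \emph{unique} $\mathcal{O}_\mathcal{E}$-linear map $\psi_M: M \to \varphi^*M$ with $\psi_M\circ(1\otimes\varphi_M) = E(u)\cdot\mathrm{Id}$, namely $\psi_M = E(u)\cdot(1\otimes\varphi_M)^{-1}$. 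By this uniqueness, the $\psi$'s for $\mathfrak{N}$ and $\mathfrak{N}'$ supplied by the $E(u)$-height $\leq 1$ hypothesis must both be restrictions of $\psi_M$, so $\psi_M(\mathfrak{N}) \subseteq \varphi^*\mathfrak{N}$ and $\psi_M(\mathfrak{N}') \subseteq \varphi^*\mathfrak{N}'$. Using flatness of $\varphi: \mathfrak{S} \to \mathfrak{S}$ (\cite[Lemma 7.1.8]{brinon-relative}), one has $\varphi^*\mathfrak{N}''=\varphi^*\mathfrak{N}+\varphi^*\mathfrak{N}'$ as submodules of $\varphi^*M$, so $\psi_M(\mathfrak{N}'') \subseteq \varphi^*\mathfrak{N}''$ and $\mathfrak{N}''$ inherits $E(u)$-height $\leq 1$.

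\textbf{Analyzing $\mathfrak{Q} \coloneqq \mathfrak{N}''/\mathfrak{N}$.} The module $\mathfrak{Q}$ is finitely generated over $\mathfrak{S}$, $u^N$-torsion for some $N$ (since $\mathfrak{N}'' \subset \mathfrak{N}[\tfrac{1}{u}]$) and $p^n$-torsion for some $n$, and inherits $\varphi$ and $\psi$ with $\psi\circ(1\otimes\varphi) = E(u)\cdot\mathrm{Id}$. I will prove $\mathfrak{Q}=0$ by induction on $n$: in the short exact sequence $0 \to p\mathfrak{Q} \to \mathfrak{Q} \to \mathfrak{Q}/p\mathfrak{Q} \to 0$, flatness of $\varphi$ on $\mathfrak{S}$ transmits the full structure to both subquotients, reducing to $n = 1$. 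For $n=1$, $\mathfrak{Q}$ is finite over $(R_0/p)[\![u]\!]/u^N$, hence finite over $R_0/p$, and $E(u) \equiv u^e \pmod p$. If $\mathfrak{Q} \neq 0$, pick a maximal ideal $\mathfrak{m} \subset R_0/p$ in the support of $\mathfrak{Q}$. Since Frobenius on $R_0/p$ is the absolute $p$-th power, every prime ideal is automatically $\varphi$-stable, so setting $\kappa \coloneqq (R_0/p)/\mathfrak{m}$ and $\mathfrak{Q}_\kappa \coloneqq \kappa[\![u]\!]\otimes_{(R_0/p)[\![u]\!]}\mathfrak{Q}$, the maps $\varphi$ and $\psi$ descend and $\mathfrak{Q}_\kappa$ is nonzero by Nakayama's lemma.

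\textbf{Length computation.} By the structure theorem over the DVR $\kappa[\![u]\!]$, write $\mathfrak{Q}_\kappa \cong \bigoplus_{i=1}^b \kappa[\![u]\!]/u^{a_i}$ with $a_i \geq 1$, and set $a \coloneqq a_1+\cdots+a_b \geq b$. Then $\varphi^*\mathfrak{Q}_\kappa \cong \bigoplus_{i=1}^b \kappa[\![u]\!]/u^{pa_i}$ has length $pa$, while $\mathfrak{Q}_\kappa$ has length $a$. From $\psi\circ(1\otimes\varphi) = u^e$ on $\varphi^*\mathfrak{Q}_\kappa$ one obtains $\ker(1\otimes\varphi) \subseteq \ker(u^e)$, the latter having length $\sum_{i=1}^b \min(e, pa_i) = eb$ (since $pa_i \geq p > e$). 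Hence the image of $1\otimes\varphi$ has length at least $pa-eb$, but it sits inside $\mathfrak{Q}_\kappa$ of length $a$, forcing $(p-1)a \leq eb \leq ea$ (using $a\geq b$), i.e.\ $p-1 \leq e$, contradicting $e < p-1$. Thus $\mathfrak{Q}_\kappa = 0$ and $\mathfrak{Q}=0$, completing the proof. The main subtle point throughout is the systematic descent of the $\psi$-structure through sums, quotients, and residue-field base changes, which all hinge on flatness of $\varphi$ on $\mathfrak{S}$; the final length inequality is then the exact analogue of the one used in the proof of Proposition \ref{prop:4.3}.
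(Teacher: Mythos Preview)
Your proof is correct and follows essentially the same approach as the paper: form the sum $\mathfrak{N}+\mathfrak{N}'$, show it inherits $E(u)$-height $\leq 1$ via flatness of $\varphi$ on $\mathfrak{S}$, and then prove the quotient vanishes by reducing to a $u$-torsion module over $\kappa[\![u]\!]$ and deriving the contradictory inequality $(p-1)a \leq eb$ from $e<p-1$. The paper's proof simply defers this last step to the argument in Proposition~\ref{prop:4.3} (which reaches the same inequality via a wedge-product trick rather than your direct length count), so the two are minor variations on the same idea.
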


\begin{proof}
Consider $\mathfrak{N}$ and $\mathfrak{N}'$ as $\mathfrak{S}$-submodules of $\mathfrak{N}[\frac{1}{u}]$. Let $\mathfrak{L}$ be the cokernel of the embedding $\mathfrak{N} \hookrightarrow \mathfrak{N}+\mathfrak{N}'$ of $\mathfrak{S}$-modules. Note that $\mathfrak{S}\otimes_{\varphi, \mathfrak{S}}(\mathfrak{N}+\mathfrak{N}') \cong \mathfrak{S}\otimes_{\varphi, \mathfrak{S}}\mathfrak{N}+\mathfrak{S}\otimes_{\varphi, \mathfrak{S}}\mathfrak{N}'$ since $\varphi: \mathfrak{S} \rightarrow \mathfrak{S}$ is flat. Thus, $\mathfrak{N}+\mathfrak{N}'$ has $E(u)$-height $\leq 1$, and $\mathfrak{L}$ has $E(u)$-height $\leq 1$. Since  $\mathfrak{L}[\frac{1}{u}] = 0$, we deduce similarly as in the proof of Proposition \ref{prop:4.3} that $\mathfrak{L} = 0$. So $\mathfrak{N} = \mathfrak{N}+\mathfrak{N}'$. Similarly, $\mathfrak{N}' = \mathfrak{N}+\mathfrak{N}'$.
\end{proof}

It is clear that both $p\mathfrak{M}_{n +1}$ and $\mathfrak{M}_n$ are $u$-torsion free,  have $E(u)$-height $\leq 1$ and $ p\mathfrak{M}_{n+1}[\frac{1}{u}]= p \mathcal M_{n+1} \cong  \mathcal M_n= \mathfrak{M}_n[\frac 1 u] $. We conclude the following: 
\begin{prop} \label{prop:4.5}
For each $n \geq 1$, we have a $\varphi$-equivariant isomorphism
\[
p\mathfrak{M}_{n+1} \cong \mathfrak{M}_n.
\]	
\end{prop}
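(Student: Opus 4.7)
The plan is to deduce the proposition as a direct consequence of Lemma 4.4 applied to $\mathfrak{N} = p\mathfrak{M}_{n+1}$ and $\mathfrak{N}' = \mathfrak{M}_n$, after first realizing both as $\varphi$-stable $\mathfrak{S}$-submodules of a common ambient module. Since $\mathcal{M} = \mathcal{M}^\vee(T)$ is projective over $\mathcal{O}_\mathcal{E}$ (hence $p$-torsion free) and $\mathfrak{M}_{\mathcal{O}_L}$ is finite free over $\mathfrak{S}_{\mathcal{O}_L}$ (hence $p$-torsion free), multiplication by $p$ induces natural $\varphi$-equivariant isomorphisms $\mathcal{M}_n \xrightarrow{\sim} p\mathcal{M}_{n+1}$ inside $\mathcal{M}_{n+1}$ and $\mathfrak{M}_{\mathcal{O}_L, n} \xrightarrow{\sim} p\mathfrak{M}_{\mathcal{O}_L, n+1}$ inside $\mathfrak{M}_{\mathcal{O}_L, n+1}$. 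Viewing everything inside $\mathcal{M}_{\mathcal{O}_L, n+1}$, one has $p\mathfrak{M}_{n+1} \subset p\mathcal{M}_{n+1} \cap p\mathfrak{M}_{\mathcal{O}_L, n+1}$, and under the identification with $\mathcal{M}_{\mathcal{O}_L, n}$ this lands in $\mathcal{M}_n \cap \mathfrak{M}_{\mathcal{O}_L, n} = \mathfrak{M}_n$. In this way $p\mathfrak{M}_{n+1}$ and $\mathfrak{M}_n$ become canonically identified as $\varphi$-stable $\mathfrak{S}$-submodules of the same \'etale $\varphi$-module $\mathcal{M}_{\mathcal{O}_L, n}$.

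I would then verify the three hypotheses of Lemma 4.4. For $u$-torsion freeness, $\mathfrak{M}_n$ embeds into the free $\mathfrak{S}_{\mathcal{O}_L, n}$-module $\mathfrak{M}_{\mathcal{O}_L, n}$, and $p\mathfrak{M}_{n+1}$ embeds into $\mathfrak{M}_{n+1} \subset \mathfrak{M}_{\mathcal{O}_L, n+1}$, both of which are $u$-torsion free. For $E(u)$-height $\leq 1$, Lemma 4.2 handles $\mathfrak{M}_n$; for $p\mathfrak{M}_{n+1}$, I would restrict the witnessing map $\psi: \mathfrak{M}_{n+1} \to \varphi^*\mathfrak{M}_{n+1}$ to the $\varphi$-stable submodule $p\mathfrak{M}_{n+1}$, noting that its image lies in $p\varphi^*\mathfrak{M}_{n+1} \cong \varphi^*(p\mathfrak{M}_{n+1})$ by flatness of $\varphi: \mathfrak{S} \to \mathfrak{S}$, and that the composition with $1\otimes\varphi$ remains multiplication by $E(u)$. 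Finally, Lemma 4.1 gives $\mathfrak{M}_n[\tfrac{1}{u}] = \mathcal{M}_n$ and $p\mathfrak{M}_{n+1}[\tfrac{1}{u}] = p\mathcal{M}_{n+1}$, and these agree under the $\varphi$-equivariant identification $\mathcal{M}_n \cong p\mathcal{M}_{n+1}$.

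Once these checks are in hand, Lemma 4.4 immediately gives $p\mathfrak{M}_{n+1} = \mathfrak{M}_n$ inside $\mathcal{M}_{\mathcal{O}_L, n}$, which is precisely the claimed $\varphi$-equivariant isomorphism. There is no substantial obstacle: the content of the proof is the bookkeeping needed to make the natural isomorphisms $p\mathcal{M}_{n+1} \cong \mathcal{M}_n$ and $p\mathfrak{M}_{\mathcal{O}_L, n+1} \cong \mathfrak{M}_{\mathcal{O}_L, n}$ compatible, so that the inclusion $p\mathfrak{M}_{n+1} \hookrightarrow \mathfrak{M}_n$ is set up correctly; after that, Lemma 4.4 does all the work.
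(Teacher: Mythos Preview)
Your proposal is correct and follows exactly the same approach as the paper: apply Lemma~4.4 with $\mathfrak{N} = p\mathfrak{M}_{n+1}$ and $\mathfrak{N}' = \mathfrak{M}_n$, after checking that both are $u$-torsion free, have $E(u)$-height $\leq 1$, and satisfy $p\mathfrak{M}_{n+1}[\tfrac{1}{u}] = p\mathcal{M}_{n+1} \cong \mathcal{M}_n = \mathfrak{M}_n[\tfrac{1}{u}]$. The paper's proof is just a one-sentence remark declaring these checks ``clear,'' whereas you have spelled out the bookkeeping in more detail.
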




\noindent By Lemma \ref{lem:4.2}, Proposition \ref{prop:4.3} and \ref{prop:4.5}, if we define the $\mathfrak{S}$-module
\[
\mathfrak{M} \coloneqq \varprojlim_n \mathfrak{M_n},
\]	
then $\mathfrak{M} \in \mathrm{Kis}^1(\mathfrak{S})$. Note that we have a $\varphi$-equivariant isomorphism $\mathfrak{M}\otimes_{\mathfrak{S}}\mathfrak{S}_{\mathcal{O}_L} \cong \mathfrak{M}_{\mathcal{O}_L}$ by Lemma \ref{lem:4.1}.

\begin{rem} \label{rem:4.6}
Analogous statements hold when $T$ is a crystalline $\mathcal{G}_R$-representation with Hodge-Tate weights in $[0, r]$ for the case $er < p-1$, since \cite{brinon-trihan} constructs more generally a functor from crystalline representations with Hodge-Tate weights in $[0, r]$ to Kisin modules of height $r$ when the base is a complete discrete valuation field whose residue field has a finite $p$-basis.
\end{rem}

To study connections for $\mathfrak{M}$, we first consider the following general situation. Let $A_0$ be a $k$-algebra which is an integral domain. Consider $n$-variables $x_1, \ldots, x_n$, and denote $\underline{x} = (x_1, \ldots, x_n)^t$ and $\underline{x}^{[p]} \coloneqq (x_1^p, \ldots, x_n^p)^t$. An Artin-Schreier system of equations in $n$ variables over $A_0$ is given by
\begin{equation} \label{eq:4.1}
\underline{x} = B\underline{x}^{[p]}+C	
\end{equation}
where $B = (b_{ij})_{1 \leq i, j \leq n} \in M_{n\times n}(A_0)$ is an $n\times n$ matrix with entries in $A_0$ and $C = (c_i)_{1 \leq i \leq n} \in M_{n\times 1}(A_0)$. Let
\[
A_1 \coloneqq A_0[x_1, \ldots, x_n]/(x_1-c_1-\sum_{i = 1}^n b_{1i}x_i^p, \ldots, x_n-c_n-\sum_{i = 1}^n b_{ni}x_i^p),
\]
which is the $A_0$-algebra parametrizing the solutions of equation (\ref{eq:4.1}). $A_0 \rightarrow A_1$ is \'etale by \cite[Theorem 2.4.1(a)]{vasiu}.

\begin{lem} \label{lem:4.7}
There exists a non-zero element $f \in A_0$ which depends only on $B$ such that $A_1[\frac{1}{f}]$ is finite \'etale over $A_0[\frac{1}{f}]$.	
\end{lem}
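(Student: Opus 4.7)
The plan is to set $f := \det(B) \in A_0$; this element depends only on $B$ as required, and in the main case of interest we may assume it is nonzero (the degenerate case is addressed below). Since \'etaleness of $A_0 \to A_1$ has already been supplied by \cite[Theorem 2.4.1(a)]{vasiu}, the only remaining task is to show that $A_1[\tfrac{1}{f}]$ is finitely generated as an $A_0[\tfrac{1}{f}]$-module; combining finiteness with \'etaleness will yield finite \'etale.

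The key observation is that over $A_0[\tfrac{1}{f}]$ the matrix $B$ becomes invertible, so multiplying the defining system $\underline{x} = B\,\underline{x}^{[p]} + C$ on the left by $B^{-1}$ rewrites it as
\[
\underline{x}^{[p]} \;=\; B^{-1}(\underline{x} - C),
\]
which exhibits each $x_j^p$ as an affine-linear expression $L_j(x_1,\ldots,x_n)$ with coefficients in $A_0[\tfrac{1}{f}]$. I would then prove by induction on total monomial degree that the finite set $\{x_1^{a_1}\cdots x_n^{a_n} \mid 0 \le a_j \le p-1\}$ spans $A_1[\tfrac{1}{f}]$ as an $A_0[\tfrac{1}{f}]$-module: any monomial $x_1^{a_1}\cdots x_n^{a_n}$ with some $a_j \ge p$ contains a factor $x_j^p$, which can be replaced by $L_j$, strictly decreasing the total degree. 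Iterating this reduction produces a finite generating set of size at most $p^n$, and hence finite generation of $A_1[\tfrac{1}{f}]$ over $A_0[\tfrac{1}{f}]$.

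The main obstacle I anticipate is the degenerate possibility that $\det(B) = 0$ in $A_0$. My proposed resolution is to replace $f$ by a nonzero $r \times r$ minor of $B$, where $r$ is the generic rank of $B$ over $\mathrm{Frac}(A_0)$; after inverting such a minor, Gaussian row-reduction over $A_0[\tfrac{1}{f}]$ kills the remaining $n - r$ rows of $B$, so the corresponding $n - r$ equations express $x_{r+1},\ldots,x_n$ as affine-linear functions of $x_1,\ldots,x_r$ with $A_0[\tfrac{1}{f}]$-coefficients. Substituting these back (and using Frobenius-linearity in characteristic $p$ to raise the substitutions to $p$-th powers entrywise) yields a new Artin--Schreier system in only $r$ variables, to which the argument of the previous paragraph applies, possibly after one further iteration. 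In the actual application of Lemma~\ref{lem:4.7} inside the proof of the subsequent descent result, the matrix $B$ naturally arises with generic invertibility, so in practice only the clean case $\det(B) \neq 0$ is needed.
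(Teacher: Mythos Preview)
Your approach is essentially the same as the paper's: set $f=\det B$ in the invertible case, rewrite the system as $\underline{x}^{[p]}=B^{-1}(\underline{x}-C)$ to get finiteness, and in the degenerate case use a linear dependence among the rows of $B$ to eliminate a variable and reduce to a smaller Artin--Schreier system. The paper organizes the degenerate case as a straightforward induction on $n$, reducing one variable at a time.

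One small imprecision worth fixing: after your rank-$r$ reduction, the resulting $r\times r$ matrix $B'$ need \emph{not} be invertible (for example, take $n=3$ and $B$ with a single nonzero entry in position $(1,3)$: the rank is $1$, but after eliminating $x_2,x_3$ the remaining $1\times 1$ matrix is $0$). So ``possibly after one further iteration'' is not quite right---you may need up to $n$ iterations. The clean way to say this is exactly the paper's induction on $n$: either $\det B\neq 0$ and you are done with $f=\det B$, or a row dependence lets you eliminate one variable over some $A_0[\tfrac{1}{f_1}]$ and obtain an $(n-1)$-variable system whose new matrix $B'$ again depends only on $B$; then apply the inductive hypothesis. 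With that adjustment your argument matches the paper's.
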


\begin{proof}
We induct on $n$. Suppose $n = 1$. If $\det{B} \neq 0$, then equation (\ref{eq:4.1}) is equivalent to 
\[ 
x_1^p = B^{-1}x_1-B^{-1}C,
\]
so the assertion holds with $f = \det{B}$. If $\det{B} = 0$, then $B = 0$ and $A_1 \cong A_0$, so the assertion holds trivially. 

For $n \geq 2$, if $\det{B} \neq 0$, then equation (\ref{eq:4.1}) is equivalent to 
\[
\underline{x}^{[p]} = B^{-1}\underline{x}-B^{-1}C.
\]	
Hence, with $f = \det{B}$, $A_1[\frac{1}{f}]$ is finite \'etale over $A_0[\frac{1}{f}]$. Suppose $\det{B} = 0$. Denote by $B^{(i)}$ the $i$-th row of $B$. Then up to renumbering the index for $x_i$'s, we have
\[
\sum_{i=1}^n e_iB^{(i)} = 0
\]
for some non-zero $f_1 \in A_0$ and some $e_i \in A_0[\frac{1}{f_1}]$ with $e_n = 1$. From equation (\ref{eq:4.1}), we get
\[
x_n = -\sum_{i=1}^{n-1} e_ix_i+c_n+\sum_{i=1}^{n-1}c_ie_i.
\] 
Hence, denoting $\underline{x}' = (x_1, \ldots, x_{n-1})^t$, equation (\ref{eq:4.1}) is equivalent to an Artin-Schreier system of equations in $n-1$ variables over $A_0[\frac{1}{f_1}]$
\[
\underline{x}' = B'\underline{x}'^{[p]}+C'
\]
where $B' \in M_{(n-1)\times(n-1)}(A_0[\frac{1}{f_1}])$ and $C' \in M_{(n-1)\times 1}(A_0[\frac{1}{f_1}])$. Note that $B'$ depends only on $B$ and not on $C$. Hence, the assertion follows by induction.
\end{proof}

Let $\mathcal{N} \coloneqq \mathfrak{M}\otimes_{\mathfrak{S}, \varphi}R_0$ equipped with the Frobenius $\varphi_{\mathfrak{M}}\otimes\varphi_{R_0}$. From \cite[Eq. (6.1), (6.2) and Remark 3.13]{kim-groupscheme-relative}, we have the $R_0$-submodule $\mathrm{Fil}^1 \mathcal{N} \subset \mathcal{N}$ associated with $\mathfrak{M} \in \mathrm{Kis}^1(\mathfrak{S})$ such that $p\mathcal{N} \subset \mathrm{Fil}^1 \mathcal{N}$, $\mathcal{N}/\mathrm{Fil}^1 \mathcal{N}$ is projective over $R_0/(p)$, and $(1\otimes\varphi)(\varphi^*\mathrm{Fil}^1 \mathcal{N}) = p\mathcal{N}$ as $R_0$-modules (cf. \cite[Definition 3.4 and 3.6]{kim-groupscheme-relative} for the frame $(R_0, pR_0, R_0/(p), \varphi_{R_0}, \frac{\varphi_{R_0}}{p})$). Fix an $R_0$-direct factor $\mathcal{N}^1 \subset \mathcal{N}$ which lifts $\mathrm{Fil}^1 \mathcal{N}/p\mathcal{N} \subset \mathcal{N}/p\mathcal{N}$, and let $\tilde{\mathcal{N}} \coloneqq R_0\otimes_{\varphi, R_0}(\mathcal{N}+\frac{1}{p}\mathcal{N}^1) \subset R_0[\frac{1}{p}]\otimes_{\varphi, R_0}\mathcal{N}$.

Let $\mathrm{Spf}(A, p) \rightarrow \mathrm{Spf}(R_0, p)$ be an \'etale morphism. Note that $A$ is equipped with a unique Frobenius lifting that on $R_0$, and $\widehat{\Omega}_{A} \cong A \hat{\otimes}_{R_0}\widehat{\Omega}_{R_0} \cong \bigoplus_{i=1}^m A\cdot dt_i$. For a connection 
\[
\nabla_{A, n}: A/(p^n) \otimes_{R_0} \mathcal{N} \rightarrow (A/(p^n) \otimes_{R_0} \mathcal{N})\otimes_{A}\widehat{\Omega}_{A}
\] 
on $A/(p^n) \otimes_{R_0} \mathcal{N}$, we say that the Frobenius is \textit{horizontal} if the following diagram commutes:
\[
\begin{CD}
A/(p^n)\otimes_{A}\tilde{\mathcal{N}} @>\varphi^*(\nabla_{A, n})>> A/(p^n)\otimes_{A}\tilde{\mathcal{N}}\otimes_{A}\widehat{\Omega}_{A}\\
@V1\otimes\varphi VV   @VV(1\otimes\varphi)\otimes \mathrm{id}_{\widehat{\Omega}_{A}}V\\
A/(p^n)\otimes_{A}\mathcal{N} @>\nabla_{A, n}>> A/(p^n)\otimes_{A}\mathcal{N}\otimes_{A}\widehat{\Omega}_{A}	
\end{CD}
\]
Here, $\varphi^*(\nabla_{A, n})$ is given by choosing an arbitrary lift of $\nabla_{A, n}$ on $A/(p^{n+1})\otimes_{A}\mathcal{N}$, and $\varphi^*(\nabla_{A, n})$ does not depend on the choice of such a lift (cf. \cite[Section 3.1.1 Equation (9)]{vasiu}).

\begin{prop} \label{prop:4.8}
There exists $\tilde{f} \in R_0$ with $\tilde{f} \notin pR_0$ such that the following holds:

\noindent Let $S_0$ be the $p$-adic completion of $R_0[\frac{1}{\tilde{f}}]$	 equipped with the induced Frobenius, and let $\mathfrak{S}_{S} = S_0[\![u]\!]$. Let $\mathfrak{M}_{S} = \mathfrak{M}\otimes_{\mathfrak{S}}\mathfrak{S}_{S}$ equipped with the induced Frobenius, so $\mathfrak{M}_{S} \in \mathrm{Kis}^1(\mathfrak{S}_{S})$. Then there exists a topologically quasi-nilpotent integrable connection 
\[
\nabla_{\mathfrak{M}_{S}}: (S_0\otimes_{\varphi, \mathfrak{S}_{S}}\mathfrak{M}_{S}) \rightarrow (S_0\otimes_{\varphi, \mathfrak{S}_{S}}\mathfrak{M}_{S})\otimes_{S_0} \widehat{\Omega}_{S_0}  
\] 
such that $\varphi$ is horizontal, and thus $(\mathfrak{M}_{S}, \nabla_{\mathfrak{M}_{S}}) \in \mathrm{Kis}^1(\mathfrak{S}_{S}, \nabla)$. Furthermore, we can choose $\nabla_{\mathfrak{M}_{S}}$ so that $\mathfrak{M}_{S}\otimes_{\mathfrak{S}_{S}}\mathfrak{S}_{\mathcal{O}_L}$ equipped with the induced Frobenius and connection is isomorphic to $(\mathfrak{M}_{\mathcal{O}_L}, \nabla_{\mathfrak{M}_{\mathcal{O}_L}})$ as Kisin modules over $\mathfrak{S}_{\mathcal{O}_L}$.  
\end{prop}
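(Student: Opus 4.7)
My plan is to reduce the construction of the connection $\nabla_{\mathfrak{M}_{S}}$ to solving an Artin--Schreier system of the type treated in Lemma \ref{lem:4.7}, and then to use the already-known connection $\nabla_{\mathfrak{M}_{\mathcal{O}_L}}$ to pin down a consistent solution after inverting an element $\tilde{f} \in R_0 \setminus pR_0$.

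First, after a Zariski localization of $\mathrm{Spec} R_0$ (harmless, since it can be absorbed into $\tilde{f}$ at the end), Proposition \ref{prop:4.3} lets me trivialize $\mathfrak{M}$ over $\mathfrak{S}$, so that $\varphi$ is encoded by a matrix $M$ and a prospective connection on $\tilde{\mathcal{N}}_{S}$ by matrices $N_1, \dots, N_m \in M_{d \times d}(S_0)$, one for each $dt_i$. I would translate the Frobenius-horizontality square in the definition of $\mathrm{Kis}^1(\mathfrak{S}_S, \nabla)$ into matrix relations between the $N_i$ and their $\varphi$-pullbacks. Using the divided Frobenius $\varphi_{R_0}/p$ supplied by the frame $(R_0, pR_0, R_0/p, \varphi_{R_0}, \varphi_{R_0}/p)$ of \cite[Definition 3.4]{kim-groupscheme-relative}, these relations reduced mod $p$ can be recast in the Artin--Schreier form $\underline{x} = B \underline{x}^{[p]} + C$ of (\ref{eq:4.1}), where $\underline{x}$ collects the unknown entries of the $N_i$ and $B, C$ are explicit in $M \bmod p$ and its $\partial_{t_i}$-derivatives.

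Second, Lemma \ref{lem:4.7} yields $\tilde{f}_0 \in R_0$, depending only on $B$ (hence only on the Frobenius matrix $M$), such that the associated solution scheme becomes finite \'etale over $(R_0/p)[1/\tilde{f}_0]$. Meanwhile, $\nabla_{\mathfrak{M}_{\mathcal{O}_L}}$ furnishes a distinguished solution of the system over $\mathcal{O}_{L_0}/p = \mathrm{Frac}(R_0/p)$, i.e.\ a point of this \'etale cover lying over the generic point of $\mathrm{Spec}(R_0/p)$. Since sections of finite \'etale morphisms spread out from generic points, this section extends to $(R_0/p)[1/\tilde{f}]$ for some multiple $\tilde{f}$ of $\tilde{f}_0$ lying outside $pR_0$. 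Set $S_0$ to be the $p$-adic completion of $R_0[1/\tilde{f}]$; this yields $\nabla_{\mathfrak{M}_{S}}$ modulo $p$. To lift from mod $p^n$ to mod $p^{n+1}$ inductively, the defining equations remain smooth (their Jacobian in the Artin--Schreier variables is the identity modulo $p$), so Hensel's lemma applied in the $p$-adically complete ring $S_0$, using the already existing lift coming from $\nabla_{\mathfrak{M}_{\mathcal{O}_L}}$ and spreading it out again, produces the required lift. Taking the inverse limit gives $\nabla_{\mathfrak{M}_{S}}$.

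Third, I need to verify the remaining properties. Integrability: the curvature $R_{\nabla} \in \tilde{\mathcal{N}}_S \otimes \widehat{\Omega}_{S_0}^2$ is itself $\varphi$-equivariant (since $\varphi$ is horizontal), hence satisfies an Artin--Schreier-type constraint whose only solution compatible with $R_{\nabla_{\mathfrak{M}_{\mathcal{O}_L}}} = 0$ is zero. Topological quasi-nilpotence is a mod-$p$ statement, and the inclusion $S_0/p = (R_0/p)[1/\tilde{f}] \hookrightarrow \mathrm{Frac}(R_0/p) = \mathcal{O}_{L_0}/p$ is injective, so nilpotence of the mod-$p$ connection follows from that of $\nabla_{\mathfrak{M}_{\mathcal{O}_L}} \bmod p$. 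Compatibility with $\nabla_{\mathfrak{M}_{\mathcal{O}_L}}$ under base change to $\mathfrak{S}_{\mathcal{O}_L}$ is built into the construction, since at every stage we chose the solution matching $\nabla_{\mathfrak{M}_{\mathcal{O}_L}}$. The step I expect to be the main obstacle is the first: explicitly recasting Frobenius-horizontality as an Artin--Schreier system of the exact shape (\ref{eq:4.1}), which requires a careful accounting of the factor $E(u)$ in $1 \otimes \varphi$, the divided Frobenius $\varphi_{R_0}/p$, and the interplay between $\mathcal{N}$, its enlargement $\tilde{\mathcal{N}}$, and the mod-$p$ reduction. Once this reduction to Artin--Schreier form is carried out, Lemma \ref{lem:4.7} combined with the spreading-out of the section provided by $\nabla_{\mathfrak{M}_{\mathcal{O}_L}}$ carries the rest through.
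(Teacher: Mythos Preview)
Your approach is essentially the paper's. The paper outsources precisely the step you flag as the main obstacle---recasting Frobenius-horizontality as an Artin--Schreier system of the shape (\ref{eq:4.1})---to \cite[Section 3.2, Basic Theorem]{vasiu}, and invokes the same reference at the inductive step to see that the set of horizontal lifts from $\bmod\ p^n$ to $\bmod\ p^{n+1}$ is parametrized by solutions of an Artin--Schreier system with the \emph{same} matrix $B$ as at the first step (only the constant term $C_{n+1}$ changes). This is the point you should sharpen: because $B$ is independent of $n$, the single $f$ produced by Lemma \ref{lem:4.7} and its proof works uniformly for all $n$, so the solution over $\mathcal{O}_{L_0}/p$ supplied by $\nabla_{\mathfrak{M}_{\mathcal{O}_L}}$ already lies in $S_0/p$ at every stage and no further ``spreading out'' (hence no enlargement of $\tilde f$) is needed. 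Your Hensel-lemma phrasing of the inductive step is looser and, taken literally, would only give \emph{some} lift rather than the one matching $\nabla_{\mathfrak{M}_{\mathcal{O}_L}}$; the paper's formulation via a fresh mod-$p$ Artin--Schreier system at each level is what guarantees both existence over $S_0$ and compatibility with $\nabla_{\mathfrak{M}_{\mathcal{O}_L}}$ simultaneously. Your explicit checks of integrability and topological quasi-nilpotence via the injection $S_0/p \hookrightarrow \mathcal{O}_{L_0}/p$ are fine; the paper leaves these implicit.
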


\begin{proof}
Without loss of generality, we may pass to a Zariski open set of $\mathrm{Spf}(R_0, p)$ if necessary so that $\mathcal{N}^1$ and $\mathcal{N}/\mathcal{N}^1$ are free over $R_0$. Fix an $R_0$-basis of $\mathcal{N}$ adapted to the direct factor $\mathcal{N}^1$. Let $\mathrm{Spf}(A, p) \rightarrow \mathrm{Spf}(R_0, p)$ be an \'etale morphism. Consider a connection
\[
\nabla_{A, 1}: A/(p) \otimes_{R_0} \mathcal{N} \rightarrow (A/(p) \otimes_{R_0} \mathcal{N})\otimes_{A}\widehat{\Omega}_{A}
\]  
such that the Frobenius is horizontal. By \cite[Section 3.2 Basic Theorem]{vasiu} and its proof, the set of such connections $\nabla_{A, 1}$ corresponds to the set of solutions over $A/(p)$ of an Artin-Schreier system of equations
\[
\underline{x} = B\underline{x}^{[p]}+C_1
\]
for $\underline{x} = (x_1, \ldots, x_{dm})^t$, where $B \in M_{dm\times dm}(R_0/(p))$ and $C_1 \in M_{dm\times 1}(R_0/(p))$. When $A = \mathcal{O}_{L_0}$, it has a solution given by $\nabla_{\mathfrak{M}_{L_0}}$. Since $\mathcal{O}_{L_0}/(p) \cong \mathrm{Frac}(R_0/(p))$ and $R_0/(p)$ is a unique factorization domain, the solution lies in $(R_0/(p))[\frac{1}{f}]$ for some non-zero $f \in R_0/(p)$ depending only on $B$ by Lemma \ref{lem:4.7} and its proof. Let $\tilde{f} \in R_0$ be a lift of $f$, and let $S_0$ be the $p$-adic completion of $R_0[\frac{1}{\tilde{f}}]$.

For $n \geq 1$, suppose we are given a connection 
\[
\nabla_{S_0, n}: S_0/(p^n)\otimes_{R_0} \mathcal{N} \rightarrow (S_0/(p^n)\otimes_{R_0} \mathcal{N})\otimes_{S_0} \widehat{\Omega}_{S_0}
\] 
such that the Frobenius is horizontal and inducing $\nabla_{\mathfrak{M}_{L_0}} (\mathrm{mod} ~p^n)$ via the natural map $S_0 \rightarrow \mathcal{O}_{L_0}$. By \cite[Section 3.2 Basic Theorem]{vasiu} and its proof, for the choice of a basis of $\mathcal{N}$ as above, the set of connections
\[
\nabla_{S_0, n+1}: S_0/(p^{n+1})\otimes_{R_0} \mathcal{N} \rightarrow (S_0/(p^{n+1})\otimes_{R_0} \mathcal{N})\otimes_{S_0} \widehat{\Omega}_{S_0}
\]
such that the Frobenius is horizontal and lifting $\nabla_{S_0, n}$ corresponds to the set of solutions over $S_0/(p)$ of an Artin-Schreier system of equations
\[
\underline{x} = B\underline{x}^{[p]}+C_{n+1},
\]
where $B$ is the same matrix as above and $C_{n+1} \in M_{dm\times 1}(S_0/(p))$. The solution over $\mathcal{O}_{L_0}/(p)$ given by $\nabla_{\mathfrak{M}_{L_0}}$ lies in $S_0/(p)$ by Lemma \ref{lem:4.7} and its proof. This proves the assertion.
\end{proof}

\begin{prop} \label{prop:4.9}
Let $S_0$ be a ring as given in Proposition \ref{prop:4.8}, and let $S = S_0\otimes_{W(k)}\mathcal{O}_K$. Then there exists a $p$-divisible group $G_S$ over $S$ such that $T_p(G_S) \cong T$ as $\mathcal{G}_S$-representations.	
\end{prop}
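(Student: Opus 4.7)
The plan is to apply Theorem~\ref{thm:3.3} to the Kisin module $(\mathfrak{M}_{S}, \nabla_{\mathfrak{M}_{S}})$ from Proposition~\ref{prop:4.8} to produce $G_S$, and then to identify its Tate module with $T$ as a $\mathcal{G}_S$-representation via a two-step equivariance argument: first checking $\mathcal{G}_{\tilde{S}_\infty}$-equivariance via the \'etale $\varphi$-module functor, and then $\mathcal{G}_{\mathcal{O}_L}$-equivariance via the base change to $\mathcal{O}_L$, invoking the analog of Lemma~\ref{lem:2.1} for $S$ to conclude. Concretely, I would take $G_S$ to be the $p$-divisible group over $\mathrm{Spec}\,S$ with $\mathfrak{M}^*(G_S) \cong (\mathfrak{M}_{S}, \nabla_{\mathfrak{M}_{S}})$, guaranteed by the anti-equivalence in Theorem~\ref{thm:3.3}; all that remains is the Galois-equivariant identification $T_p(G_S) \cong T$.

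For the first equivariance, I would use the formula stated after Theorem~\ref{thm:3.3}: $T_p(G_S) \cong T^\vee(\mathfrak{M}_S \otimes_{\mathfrak{S}_S} \mathcal{O}_{\mathcal{E},S})$ as $\mathcal{G}_{\tilde{S}_\infty}$-representations. Passing to the inverse limit in Lemma~\ref{lem:4.1} yields $\mathfrak{M} \otimes_{\mathfrak{S}} \mathcal{O}_\mathcal{E} \cong \mathcal{M} = \mathcal{M}^\vee(T)$, so by the base-change compatibility at the end of Section~\ref{sec:2.2},
$\mathfrak{M}_S \otimes_{\mathfrak{S}_S} \mathcal{O}_{\mathcal{E},S} \cong \mathcal{M}^\vee(T) \otimes_{\mathcal{O}_\mathcal{E}} \mathcal{O}_{\mathcal{E},S} \cong \mathcal{M}_S^\vee(T)$. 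Applying $T^\vee(\cdot)$, which by Proposition~\ref{prop:2.5} is quasi-inverse to $\mathcal{M}_S(\cdot)$ and commutes with duality, produces a canonical $\mathcal{G}_{\tilde{S}_\infty}$-equivariant isomorphism $\iota\colon T_p(G_S) \to T$.

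For the second equivariance, I would exploit the base change $S \to \mathcal{O}_L$: since $\tilde{f}\notin pR_0$, it is a unit in $\mathcal{O}_{L_0}$, so $b_L$ factors as $R \to S \to \mathcal{O}_L$. Proposition~\ref{prop:4.8} gives an isomorphism $\mathfrak{M}_S \otimes_{\mathfrak{S}_S} \mathfrak{S}_{\mathcal{O}_L} \cong \mathfrak{M}_{\mathcal{O}_L}$ of Kisin modules, so the base-change compatibility of $\mathfrak{M}^*$ in Theorem~\ref{thm:3.3} yields $G_S \times_S \mathcal{O}_L \cong G_{\mathcal{O}_L}$. Hence $T_p(G_S)|_{\mathcal{G}_{\mathcal{O}_L}} \cong T_p(G_{\mathcal{O}_L}) \cong T|_{\mathcal{G}_{\mathcal{O}_L}}$, where the last isomorphism is the Brinon--Trihan identification. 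A naturality check verifies that this identification coincides with $\iota|_{\mathcal{G}_{\mathcal{O}_L}}$, since both descend from the same underlying \'etale $\varphi$-module isomorphism under the functor $T^\vee(\cdot)$. Finally, the analog of Lemma~\ref{lem:2.1} for $S$ in place of $R$ goes through verbatim---its proof only uses that $E_{S_\infty}^+$ admits the finite $p$-basis $\{t_1,\ldots,t_m, u\}$, which is preserved from $R_0$ since $S_0/pS_0$ is a localization of $R_0/pR_0[1/\bar{f}]$. Thus the images of $\mathcal{G}_{\mathcal{O}_L}$ and $\mathcal{G}_{\tilde{S}_\infty}$ generate $\mathcal{G}_S$, forcing $\iota$ to be $\mathcal{G}_S$-equivariant.

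The main technical obstacle is the compatibility check above: establishing not merely that both identifications of $T_p(G_S)|_{\mathcal{G}_{\mathcal{O}_L}}$ with $T|_{\mathcal{G}_{\mathcal{O}_L}}$ exist, but that they are the same isomorphism so that the two equivariance statements can be combined. This reduces to tracing the natural transformations between $\mathfrak{M}^*$, the \'etale realization $(-)\otimes \mathcal{O}_\mathcal{E}$, and the contravariant Galois functors $T(\cdot)$ and $T_p(\cdot)$ through the base change $S \to \mathcal{O}_L$; it should follow from the functoriality already present in Kim's construction and the base-change compatibility recorded in Section~\ref{sec:2.2}, but merits careful bookkeeping.
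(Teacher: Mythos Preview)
Your proposal is correct and follows essentially the same route as the paper: build $G_S$ from $(\mathfrak{M}_S,\nabla_{\mathfrak{M}_S})$ via Theorem~\ref{thm:3.3}, identify $T_p(G_S)$ with $T$ separately as $\mathcal{G}_{\tilde{S}_\infty}$- and $\mathcal{G}_{\mathcal{O}_L}$-representations using the \'etale $\varphi$-module and Kisin-module base changes respectively, and conclude via Lemma~\ref{lem:2.1}. If anything, you are more careful than the paper's own proof in flagging that the two identifications must be the \emph{same} underlying bijection before the generation argument applies; the paper simply asserts the conclusion without making this compatibility explicit.
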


\begin{proof}
Let $G_S$ be the $p$-divisible group over $S$ given by $(\mathfrak{M}_{S}, \nabla_{\mathfrak{M}_{S}})$ in Proposition \ref{prop:4.8}. Since $\mathfrak{M}_{S}\otimes_{\mathfrak{S}_{S}}\mathfrak{S}_{\mathcal{O}_L} \cong \mathfrak{M}_{\mathcal{O}_L}$ as Kisin modules, we have $T_p(G_S) \cong T$ as $\mathcal{G}_{\mathcal{O}_L}$-representations. On the other hand, $\mathfrak{M}_{S}\otimes_{\mathfrak{S}_{S}}\mathcal{O}_{\mathcal{E}, S} \cong \mathcal{M}\otimes_{\mathcal{O}_{\mathcal{E}}}\mathcal{O}_{\mathcal{E}, S}$ as \'etale $\varphi$-modules. Hence, $T_p(G_S) \cong T$ as $\mathcal{G}_{\tilde{S}, \infty}$-representations. Since $\mathcal{G}_{\tilde{S}, \infty}$ and $\mathcal{G}_{\mathcal{O}_L}$ generate the Galois group $\mathcal{G}_S$ by Lemma \ref{lem:2.1}, we have $T_p(G_S) \cong T$ as $\mathcal{G}_S$-representations.
\end{proof}

\section{Proof of the main theorem}

In this section, we finish the proof of Theorem \ref{thm:1.2}. We begin by recalling the following well-known lemma about $p$-divisible groups.

\begin{lem} \label{lem:5.1}
Let $R_1$ be an integral domain over $W(k)$ such that $\mathrm{Frac}(R_1)$ has characteristic $0$. Then via the Tate module functor $T_p(\cdot)$, the category of $p$-divisible groups over $R_1[\frac{1}{p}]$ is equivalent to the category of finite free $\mathbf{Z}_p$-representations of $\mathcal{G}_{R_1} = \pi_1^{\text{\'et}}(\mathrm{Spec}R_1[\frac{1}{p}])$. Furthermore, such an equivalence is functorial in the following sense:

Let $R_1 \rightarrow R_2$ be a map of integral domains over $W(k)$ such that $\mathrm{Frac}(R_1)$ and $\mathrm{Frac}(R_2)$ have characteristic $0$. Let $G_{R_1}$ be a $p$-divisible group over $R_1$. Then $T_p(G_{R_1}) \cong T_p(G_{R_1}\times_{R_1}R_2)$ as $\mathcal{G}_{R_2}$-representations.  
\end{lem}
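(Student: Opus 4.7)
The plan is to combine two standard facts. First, over any scheme $S$ on which $p$ is invertible, each kernel $G[p^n]$ of a $p$-divisible group $G$ is a finite \'etale commutative group scheme of rank $p^{n\cdot\mathrm{ht}(G)}$, since the differential of multiplication by $p^n$ on $G$ is an isomorphism once $p \in \mathcal{O}_S^{\times}$. Second, for a connected scheme $S$ with a geometric point $\bar{s}$, the fiber functor at $\bar{s}$ is an equivalence between finite \'etale group schemes over $S$ and finite discrete groups equipped with a continuous action of $\pi_1^{\text{\'et}}(S, \bar{s})$.

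Before applying these, I would verify that $\mathrm{Spec}(R_i[\tfrac{1}{p}])$ is connected for $i = 1, 2$, so that $\mathcal{G}_{R_i}$ is well-defined: since $\mathrm{Frac}(R_i)$ has characteristic zero, $p$ is a non-zero-divisor in $R_i$, so $R_i[\tfrac{1}{p}]$ embeds into $\mathrm{Frac}(R_i)$ and is therefore an integral domain. Combining the two facts above, I obtain an equivalence between the category of $p$-divisible groups over $R_1[\tfrac{1}{p}]$ and the category of compatible inverse systems $(T_n)_{n \geq 1}$ where each $T_n$ is a finite free $\mathbf{Z}/p^n\mathbf{Z}$-module endowed with a continuous $\mathcal{G}_{R_1}$-action and the transition maps $T_{n+1} \twoheadrightarrow T_n$ are reductions modulo $p^n$, via $G \mapsto (G[p^n](\overline{R_1[\tfrac{1}{p}]}))_n$. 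Taking the inverse limit identifies this category with that of finite free $\mathbf{Z}_p$-representations of $\mathcal{G}_{R_1}$, giving the first statement.

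For the functoriality claim, a choice of compatible geometric points for a map $R_1 \to R_2$ produces a continuous homomorphism $\mathcal{G}_{R_2} \to \mathcal{G}_{R_1}$. Under the equivalence of the previous paragraph, pullback of finite \'etale group schemes along $\mathrm{Spec}(R_2[\tfrac{1}{p}]) \to \mathrm{Spec}(R_1[\tfrac{1}{p}])$ corresponds to restriction of the associated $\mathcal{G}_{R_1}$-module along $\mathcal{G}_{R_2} \to \mathcal{G}_{R_1}$; in particular this carries $G_{R_1}[p^n]$ to $(G_{R_1}\times_{R_1} R_2)[p^n]$. Passing to the inverse limit yields the desired $\mathcal{G}_{R_2}$-equivariant identification $T_p(G_{R_1}) \cong T_p(G_{R_1}\times_{R_1} R_2)$. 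There is no substantial obstacle here: the result is classical (cf.\ SGA~3), and the only point that requires a brief verification is the connectedness of $\mathrm{Spec}(R_i[\tfrac{1}{p}])$, which was dispensed with above.
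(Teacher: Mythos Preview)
Your argument is correct and is precisely the standard one: over a base where $p$ is invertible every $p$-divisible group is ind-\'etale, and the Galois-theoretic description of finite \'etale schemes over a connected base then yields the equivalence; functoriality is the usual compatibility of fiber functors under change of base point. The paper itself gives no proof of this lemma at all---it is introduced as a ``well-known lemma'' and left unproved---so there is nothing to compare against beyond noting that your outline is exactly the classical argument the authors are implicitly invoking.
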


We first consider the case when $R$ is a formal power series ring of dimension $2$. Let $T$ be a crystalline $\mathcal{G}_R$-representation which is finite free over $\mathbf{Z}_p$ and has Hodge-Tate weights in $[0, 1]$.

\begin{prop} \label{prop:5.2}
Suppose $R_0 = \mathcal{O}[\![s]\!]$ for a Cohen ring $\mathcal{O}$, and $e \leq p-1$. Then there exists a $p$-divisible group $G_R$ over $R$ such that $T_p(G_R) \cong T$ as $\mathcal{G}_R$-representations.
\end{prop}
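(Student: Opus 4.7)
The strategy is to reduce the statement to an application of the Vasiu--Zink purity theorem for $p$-divisible groups in the small-ramification case. Since $R$ is a $2$-dimensional regular local ring, this purity theorem allows one to extend a $p$-divisible group across the unique closed point $\mathfrak{m}$ of $\mathrm{Spec}(R)$ provided we first produce one on the punctured spectrum $U := \mathrm{Spec}(R) \setminus \{\mathfrak{m}\}$.

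First, I would apply Proposition~\ref{prop:4.9} to obtain a $p$-divisible group $G_S$ over $S = S_0 \otimes_{W(k)} \mathcal{O}_K$ with $T_p(G_S) \cong T$ as $\mathcal{G}_S$-representations, where $S_0$ is the $p$-adic completion of $R_0[\tfrac{1}{\tilde f}]$ for some $\tilde f \in R_0 \setminus pR_0$. The hypothesis $R_0 = \mathcal{O}[\![s]\!]$ lets me analyze $\tilde f$ via Weierstrass preparation: $\tilde f = v \cdot P(s)$ with $v \in R_0^{\times}$ and $P(s) = s^m + a_{m-1}s^{m-1} + \cdots + a_0$ a distinguished polynomial (so $a_i \in p\mathcal{O}$). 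If $m = 0$ then $\tilde f$ is a unit and $S = R$, so we are immediately done. Otherwise $P(s) \equiv s^m \pmod{p}$, hence $V(\tilde f) \cap V(p) = V(s, p) = \{\mathfrak{m}\}$ in $\mathrm{Spec}(R)$, and consequently $D(\tilde f) \cup D(p) = U$.

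Next, Lemma~\ref{lem:5.1} applied to $T$ gives a $p$-divisible group $G_{R[1/p]}$ over $R[\tfrac{1}{p}]$. Combining this with $G_S$, I would construct a $p$-divisible group $G_U$ on $U$: the base changes $G_S \otimes_S S[\tfrac{1}{p}]$ and $G_{R[1/p]} \otimes_{R[1/p]} S[\tfrac{1}{p}]$ both correspond to $T$ as $\mathcal{G}_{S[1/p]}$-representations (by Proposition~\ref{prop:4.9} and Lemma~\ref{lem:5.1}), and hence are canonically isomorphic. A Beauville--Laszlo type formal patching along $R[\tfrac{1}{\tilde f}] \to S$ then descends $G_S$ to a $p$-divisible group over $R[\tfrac{1}{\tilde f}]$, and Zariski gluing with $G_{R[1/p]}$ on the open cover $D(\tilde f) \cup D(p) = U$ produces the desired $G_U$.

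Finally, since $R$ is a $2$-dimensional regular local ring of mixed characteristic with absolute ramification index $e \leq p-1$, the purity theorem of \cite{vasiu-zink-purity} implies that $G_U$ extends uniquely to a $p$-divisible group $G_R$ over $R$. Functoriality of the Tate module, together with the isomorphism on $R[\tfrac{1}{p}]$ coming from $G_{R[1/p]}$ via Lemma~\ref{lem:5.1}, yields $T_p(G_R) \cong T$ as $\mathcal{G}_R$-representations. The main obstacle I anticipate is the formal patching step producing $G_U$: because $S$ arises from $R[\tfrac{1}{\tilde f}]$ by $p$-adic completion rather than a simple localization, carrying out this descent for $p$-divisible groups requires combining faithfully flat descent with the compatibility of the Tate-module functor under base change, using the explicit identifications furnished by Proposition~\ref{prop:4.9} and Lemma~\ref{lem:5.1}.
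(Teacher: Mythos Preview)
Your overall strategy---produce a $p$-divisible group on the punctured spectrum $U$ by Beauville--Laszlo gluing of a $p$-divisible group on a $p$-adic completion with one on $R[\tfrac{1}{p}]$, then invoke Vasiu--Zink purity to fill in the closed point---is exactly the paper's. The difference lies in the input to the gluing. The paper does not appeal to Proposition~\ref{prop:4.9}; instead it observes that the $p$-adic completion of $R[\tfrac{1}{s}]$ is $\mathcal{O}_L$, invokes Brinon--Trihan directly to obtain $G_{\mathcal{O}_L}$, and glues along the explicit element $s$ (so no Weierstrass preparation is needed). This keeps the argument independent of Section~\ref{sec:4}.

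That independence is not merely aesthetic. All of Section~\ref{sec:4}, and in particular Proposition~\ref{prop:4.9}, is proved under the standing hypothesis $e < p-1$ (the strict inequality is used in the proof of Proposition~\ref{prop:4.3}), whereas Proposition~\ref{prop:5.2} is stated for $e \leq p-1$. Your route therefore does not cover the boundary case $e = p-1$; the paper's does, since Brinon--Trihan carries no constraint on $e$ and the only ramification restriction enters through Vasiu--Zink purity, which requires precisely $e \leq p-1$. For the eventual application to Theorem~\ref{thm:1.2} (where $e < p-1$) your argument would suffice, but as a proof of Proposition~\ref{prop:5.2} as stated there is a gap at $e = p-1$. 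Apart from this, your Beauville--Laszlo descent and Zariski-gluing steps are sound and run parallel to the paper's.
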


\begin{proof}
Let $G$ be a $p$-divisible group over $R[\frac{1}{p}]$ given by Lemma \ref{lem:5.1} such that $T_p(G) \cong T$ as $\mathcal{G}_R$-representations. It suffices to show that $G$ extends to a $p$-divisible group $G_R$ over $R$.

By \cite[Theorem 6.10]{brinon-trihan}, there exists a $p$-divisible group $G_{\mathcal{O}_L}$ over $\mathcal{O}_L$ extending $G \times_{R[\frac{1}{p}]} L$. For each integer $n \geq 1$, let $A_n$ be the Hopf algebra over $R[\frac{1}{s}][\frac{1}{p}]$ for the finite flat group scheme $(G\times_{R[\frac{1}{p}]}R[\frac{1}{s}][\frac{1}{p}])[p^n]$, and let $B_n$ be the Hopf algebra over $\mathcal{O}_L$ for the finite flat group scheme $G_{\mathcal{O}_L}[p^n]$. Identify $A_n\otimes_{R[\frac{1}{s}][\frac{1}{p}]}L = B_n\otimes_{\mathcal{O}_L}L$ as Hopf algebras over $L$. Note that the $p$-adic completion of $R[\frac{1}{s}]$ is isomorphic to $\mathcal{O}_L$. By \cite[Main Theorem]{beuville-Laszlo} and its proof, the $R[\frac{1}{s}]$-subalgebra $C_n \coloneqq A_n \cap B_n \subset  B_n\otimes_{\mathcal{O}_L}L$ is finite flat over $R[\frac{1}{s}]$. Moreover, $C_n$ is equipped with the Hopf algebra structure induced from $(A_n, B_n)$ such that $C_n\otimes_{R[\frac{1}{s}]}R[\frac{1}{s}][\frac{1}{p}] \cong A_n$ and $C_n\otimes_{R[\frac{1}{s}]}\mathcal{O}_L \cong B_n$. Hence, the datum of finite flat group schemes $((G\times_{R[\frac{1}{p}]}R[\frac{1}{s}][\frac{1}{p}])[p^n], G_{\mathcal{O}_L}[p^n])$ descends to a finite flat group scheme over $R[\frac{1}{s}]$ (up to a unique isomorphism by \cite[Main Theorem]{beuville-Laszlo}).

Thus, we obtain a system of finite flat group schemes $(G_{U, n})_{n \geq 1}$ over $U \coloneqq \mathrm{Spec}R ~\backslash~ \mathrm{pt}$ extending $(G[p^n])_{n \geq 1}$. Here, $\mathrm{pt}$ denotes the closed point given by the maximal ideal of $R$. The natural induced sequence of finite flat group schemes
\[
0 \rightarrow G_{U, 1} \rightarrow G_{U, n+1} \stackrel{\times p}{\longrightarrow} G_{U, n} \rightarrow 0
\]
is exact by fpqc descent. So $(G_{U, n})_{n \geq 1}$ is a $p$-divisible group over $U$ extending $G$. Since $e \leq p-1$, $G_U$ extends to a $p$-divisible group $G_R$ over $R$ by \cite[Theorem 3]{vasiu-zink-purity}.
\end{proof}

Now, let $R_0$ be a general ring satisfying the assumptions in Section \ref{sec:2.1}, and let $R = R_0\otimes_{W(k)} \mathcal{O}_K$ with $e < p-1$. Let $T$ be a crystalline $\mathcal{G}_R$-representation free over $\mathbf{Z}_p$ with Hodge-Tate weights in $[0, 1]$. Denote by $\mathfrak{M}_{\mathfrak{S}}(T)$ the $\mathfrak{S}$-module in  $\mathrm{Kis}^1(\mathfrak{S})$ constructed from $T$ as in Section \ref{sec:4}. Let $\tilde{f} \in R_0$ be an element as in Proposition \ref{prop:4.8}, and let $S_0$ be the $p$-adic completion of $R_0[\frac{1}{\tilde{f}}]$ as in Proposition \ref{prop:4.9}. Let $f \in R_0/pR_0$ be the image of $\tilde{f}$ in the projection $R_0 \rightarrow R_0/(p)$. We only need to consider the case when $f$ is not a unit in $R_0/(p)$. Since $R_0/(p)$ is a UFD,  there exist prime elements $\bar{s}_1, \ldots, \bar{s}_l$ of $R_0/(p)$ dividing $f$. Let $s_1, \ldots, s_l \in R_0$ be any preimages of $\bar{s}_1, \ldots, \bar{s}_l$ respectively. 

For each $i = 1, \ldots, l$, consider the prime ideal $\mathfrak{p}_i = (p, s_i) \subset R_0$ and let $R_0^{(i)} \coloneqq \widehat{R_{0, \mathfrak{p}_i}}$ be the $\mathfrak{p}_i$-adic completion of $R_{0, \mathfrak{p}_i}$. Note that $R_0^{(i)}$ is a formal power series ring over a Cohen ring with Krull dimension $2$. We consider the natural $\varphi$-equivariant map $b_i: R_0 \rightarrow R_0^{(i)}$, which induces $b_i: R \rightarrow R^{(i)}\coloneqq R_0^{(i)}\otimes_{W(k)}\mathcal{O}_K$. On the other hand, let $k_c$ be a field extension of $\mathrm{Frac}(R_0/pR_0)$ which is a composite of the fields $\mathrm{Frac} (R_0^{(i)}/(p))$ for $i = 1, \ldots, l$, and let $k_c^{\mathrm{perf}} = \varinjlim_{\varphi} k_c$ be its direct perfection. By the universal property of $p$-adic Witt vectors, there exists a unique $\varphi$-equivariant map $b_c: R_0 \rightarrow W(k_c^{\mathrm{perf}})$. Moreover, for each $i = 1, \ldots, l$, we have a unique $\varphi$-equivariant embedding $R_0^{(i)} \rightarrow W(k_c^{\mathrm{perf}})$ whose composite with $b_i$ is equal to $b_c$. Note that the natural embedding $R_0 \rightarrow S_0 \cap \bigcap_{i=1}^l R_0^{(i)}$ as subrings of $W(k_c^{\mathrm{perf}})$ is bijective, since $S_0/(p) \cap \bigcap_{i=1}^l (R_0^{(i)}/(p)) = R_0/(p)$ inside $k_c^{\mathrm{perf}}$.

By Proposition \ref{prop:5.2}, there exists a $p$-divisible group $G_i$ over $R^{(i)}$ such that $T_p(G_i) \cong T$ as $\mathcal{G}_{R^{(i)}}$-representations. We have
\[(\mathfrak{M}_{\mathfrak{S}}(T)\otimes_{\mathfrak{S}}\mathcal{O}_{\mathcal{E}})\otimes_{\mathcal{O}_{\mathcal{E}}}\mathcal{O}_{\mathcal{E}, R^{(i)}} \cong \mathfrak{M}^*(G_i)\otimes_{\mathfrak{S}_{R^{(i)}}}\mathcal{O}_{\mathcal{E}, R^{(i)}}
\]
as \'etale $(\varphi, \mathcal{O}_{\mathcal{E}, R^{(i)}})$-modules. Applying Lemma \ref{lem:4.4}, we can deduce that $\mathfrak{M}_{\mathfrak{S}}(T)\otimes_{\mathfrak{S}}\mathfrak{S}_{R^{(i)}} \cong \mathfrak{M}^*(G_i)$ compatibly with Frobenius. 

Let $D = D_{\mathrm{cris}}(T[\frac{1}{p}])$, and denote $\mathfrak{M} = \mathfrak{M}_{\mathfrak{S}}(T)$ and $\mathcal{N} = \mathfrak{M}\otimes_{\mathfrak{S}, \varphi}R_0$. Let $\nabla: D \rightarrow D\otimes_{R_0} \widehat{\Omega}_{R_0}$ be the connection given by the functor $D_{\mathrm{cris}}(\cdot)$. 

\begin{prop} \label{prop:5.3}
There exists a natural $\varphi$-equivariant embedding 
\[
h: \mathcal{N} \hookrightarrow D
\]	
of $R_0$-modules. Furthermore, if we consider $\mathcal{N}$ as an $R_0$-submodule of $D$ via $h$, then $\nabla$ maps $\mathcal{N}$ into $\mathcal{N}\otimes_{R_0} \widehat{\Omega}_{R_0}$. Hence, $\mathfrak{M}$ is a Kisin module of height $1$.
\end{prop}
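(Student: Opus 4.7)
The plan is to construct $h$ by gluing together crystalline comparison isomorphisms on several local pieces, and then to descend using the ambient intersection $R_0 = S_0 \cap \bigcap_{i=1}^l R_0^{(i)}$ inside $W(k_c^{\mathrm{perf}})$. The principal inputs are the $p$-divisible group $G_S$ over $S$ from Proposition \ref{prop:4.9} and the $p$-divisible groups $G_i$ over $R^{(i)}$ from Proposition \ref{prop:5.2}, together with the Frobenius-equivariant identifications $\mathfrak{M}\otimes_{\mathfrak{S}}\mathfrak{S}_{R^{(i)}}\cong\mathfrak{M}^*(G_i)$ and $\mathfrak{M}\otimes_{\mathfrak{S}}\mathfrak{S}_S\cong\mathfrak{M}^*(G_S)$ established earlier. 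Applying the crystalline comparison for relative Kisin modules of $p$-divisible groups (the Dieudonn\'e module recovers $D_{\mathrm{cris}}$ of the Tate module after inverting $p$, compatibly with Frobenius and connection), I obtain canonical isomorphisms
\[
\alpha_i : \mathcal{N}\otimes_{R_0} R_0^{(i)}[\tfrac{1}{p}] \xrightarrow{\sim} D\otimes_{R_0[\frac{1}{p}]} R_0^{(i)}[\tfrac{1}{p}], \qquad \alpha_S : \mathcal{N}\otimes_{R_0} S_0[\tfrac{1}{p}] \xrightarrow{\sim} D\otimes_{R_0[\frac{1}{p}]} S_0[\tfrac{1}{p}],
\]
where $\mathcal{N}\otimes_{R_0} R_0^{(i)}$ is identified with the Dieudonn\'e module $\mathfrak{M}^*(G_i)\otimes_{\mathfrak{S}_{R^{(i)}},\varphi}R_0^{(i)}$ of $G_i$ via base change, and similarly over $S_0$.

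Next I would verify that $\alpha_i$ and $\alpha_S$ become compatible after further base change to $W(k_c^{\mathrm{perf}})[\tfrac{1}{p}]$, using the naturality of the crystalline comparison and the fact that each comparison is determined by the underlying representation $T$ through the identifications $T_p(G_i)\cong T\cong T_p(G_S)$. Since $R_0[\tfrac{1}{p}] = S_0[\tfrac{1}{p}] \cap \bigcap_{i=1}^l R_0^{(i)}[\tfrac{1}{p}]$ inside $W(k_c^{\mathrm{perf}})[\tfrac{1}{p}]$ (immediate from the analogous integral intersection formula by clearing denominators), and since $D$ and $\mathcal{N}[\tfrac{1}{p}]$ are finite projective over $R_0[\tfrac{1}{p}]$, the matching local isomorphisms glue to a $\varphi$- and $\nabla$-equivariant isomorphism $\alpha : \mathcal{N}[\tfrac{1}{p}] \xrightarrow{\sim} D$. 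The composition $h : \mathcal{N}\hookrightarrow\mathcal{N}[\tfrac{1}{p}]\xrightarrow{\alpha}D$ is then $\varphi$-equivariant and injective, because $\mathcal{N}$ is $p$-torsion free ($\mathfrak{M}$ being $\mathfrak{S}$-projective and $R_0$ being $p$-torsion free).

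It remains to check that $\nabla(h(\mathcal{N}))\subset h(\mathcal{N})\otimes_{R_0}\widehat{\Omega}_{R_0}$. I would verify this locally: after base change to $R_0^{(i)}$, the isomorphism $\alpha_i$ intertwines $\nabla\otimes 1$ with the Dieudonn\'e connection of $G_i$, which preserves the integral submodule $\mathfrak{M}^*(G_i)\otimes_{\mathfrak{S}_{R^{(i)}},\varphi}R_0^{(i)}\cong\mathcal{N}\otimes_{R_0}R_0^{(i)}$; analogously for $\alpha_S$ over $S_0$. Invoking the intersection formula for finite projective modules then yields the desired global statement, giving $(\mathfrak{M},\nabla_{\mathfrak{M}})\in\mathrm{Kis}^1(\mathfrak{S},\nabla)$ with $\nabla_{\mathfrak{M}}$ induced by $\nabla|_{h(\mathcal{N})}$. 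The main obstacle is the compatibility step in the gluing argument: because $G_S$ and the $G_i$ are constructed independently, verifying that the resulting comparisons agree on overlaps requires carefully exploiting the canonicity of the crystalline comparison together with the uniqueness of a $p$-divisible group lifting a given crystalline representation (in the spirit of Lemma \ref{lem:5.1}, extended from the generic fiber to the integral level), which is the technical heart of the proof.
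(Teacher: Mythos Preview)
Your proposal is correct and follows essentially the same approach as the paper: both construct local comparison maps over $S_0$ and the $R_0^{(i)}$ via the crystalline comparison for the $p$-divisible groups $G_S$ and $G_i$, check compatibility after base change to $W(k_c^{\mathrm{perf}})$ using the functoriality of the comparison in the underlying representation, and then descend via the intersection $R_0[\tfrac{1}{p}] = S_0[\tfrac{1}{p}]\cap\bigcap_i R_0^{(i)}[\tfrac{1}{p}]$ to obtain $h$ and the integrality of $\nabla$. The paper handles your ``main obstacle'' by directly invoking the base-change compatibility of the natural map $\mathcal{N}\to D$ proved in \cite[Section 5.5]{kim-groupscheme-relative}, so the compatibility verification you flag as the technical heart is in fact a citation rather than a new argument.
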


\begin{proof}
By \cite[Corollary 5.3 and 6.7]{kim-groupscheme-relative}, there exists a natural $\varphi$-equivariant embedding 
\[
h_i: \mathcal{N} \rightarrow D\otimes_{R_0, b_i} R_0^{(i)}
\]	
for each $i = 1, \ldots, l$ such that the connections given by $\mathfrak{M}^*(G_i)$ and $D$ are compatible, and there exists a natural $\varphi$-equivariant embedding $h_c: \mathcal{N} \rightarrow D\otimes_{R_0, b_c} W(k_c^{\mathrm{perf}})$. Moreover, by Proposition \ref{prop:4.9}, there exists a natural $\varphi$-equivariant embedding $h_S: \mathcal{N} \rightarrow D\otimes_{R_0} S_0$ such that the connections given by $\mathfrak{M}^*(G_S)$ and $D$ are compatible. Since the construction of those natural maps  is compatible with $\varphi$-equivariant base changes (cf. \cite[Section 5.5]{kim-groupscheme-relative}), we deduce that the maps $h_1, \ldots, h_l$ and $h_S$ are compatible with one another, in the sense that their composites with the embedding into $D\otimes_{R_0, b_c}W(k_c^{\mathrm{perf}})$ are all equal to $h_c$. Hence, we obtain a $\varphi$-equivariant embedding
\[
h: \mathcal{N} \hookrightarrow (D\otimes_{R_0[\frac{1}{p}]} S_0[\frac{1}{p}]) \cap (\bigcap_{i=1}^l D\otimes_{R_0[\frac{1}{p}], b_i} R_0^{(i)}[\frac{1}{p}]) \cong D\otimes_{R_0[\frac{1}{p}]} (S_0[\frac{1}{p}] \cap \bigcap_{i=1}^l R_0^{(i)}[\frac{1}{p}]) = D,
\]
since $D$ is flat over $R_0[\frac{1}{p}]$ and $S_0[\frac{1}{p}] \cap \bigcap_{i=1}^l R_0^{(i)}[\frac{1}{p}] = R_0[\frac{1}{p}]$. 

Now, identify $\widehat{\Omega}_{R_0} = \bigoplus_{j=1}^m R_0 \cdot dt_j$. Then $\nabla$ maps $\mathcal{N}$ to $\mathcal{N}\otimes_{R_0} (\bigoplus_{j=1}^m R_0[\frac{1}{p}] \cdot dt_j)$. On the other hand, by Proposition \ref{prop:4.8}, Proposition \ref{prop:5.2}, and the compatibility of $D_{\mathrm{cris}}(\cdot)$ with respect to $\varphi$-compatible base changes, we have that $\nabla$ maps $\mathcal{N}$ into $\mathcal{N}\otimes_{R_0} (\bigoplus_{j=1}^m S_0 \cdot dt_j)$ and also into $\mathcal{N}\otimes_{R_0} (\bigoplus_{j=1}^m R_0^{(i)} \cdot dt_j)$ for each $i = 1, \ldots, l$. Since $\mathcal{N}$ is flat over $R_0$ and $S_0 \cap \bigcap_{i=1}^l R_0^{(i)} = R_0$, $\nabla$ maps $\mathcal{N}$ into $\mathcal{N}\otimes_{R_0} (\bigoplus_{j=1}^m R_0 \cdot dt_j)$.
\end{proof}

\begin{thm} \label{thm:5.4}
There exists a $p$-divisible group $G_R$ over $R$ such that $T_p(G_R) \cong T$ as $\mathcal{G}_R$-representations.	
\end{thm}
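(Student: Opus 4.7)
The plan is to feed the Kisin module constructed so far into the anti-equivalence of Theorem \ref{thm:3.3} to produce the candidate $p$-divisible group, and then to verify the Tate module condition on the generating subgroups provided by Lemma \ref{lem:2.1}.

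First, by Proposition \ref{prop:5.3}, the pair $(\mathfrak{M}, \nabla_{\mathfrak{M}})$ obtained by pulling the connection on $D = D_{\mathrm{cris}}(T[\tfrac{1}{p}])$ back to $\mathcal{N} = \mathfrak{M}\otimes_{\mathfrak{S},\varphi} R_0$ defines an object of $\mathrm{Kis}^1(\mathfrak{S}, \nabla)$; topological quasi-nilpotence mod $p$ is inherited from its image in $\mathcal{N}\otimes_{R_0} \mathcal{O}_{L_0}/(p)$, where it comes from an actual $p$-divisible group via Theorem \ref{thm:3.3}. Applying the anti-equivalence of Theorem \ref{thm:3.3} then produces a $p$-divisible group $G_R$ over $R$ with $\mathfrak{M}^*(G_R) \cong (\mathfrak{M}, \nabla_{\mathfrak{M}})$.

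Next I would identify $T_p(G_R)$ with $T$ separately as $\mathcal{G}_{\tilde{R}_\infty}$- and $\mathcal{G}_{\mathcal{O}_L}$-representations. On the one hand, the comparison isomorphism $T^\vee(\mathfrak{M}^*(G_R)\otimes_{\mathfrak{S}}\mathcal{O}_\mathcal{E}) \cong T_p(G_R)$ recalled after Theorem \ref{thm:3.3}, combined with the first isomorphism of Lemma \ref{lem:4.1} (after passing to the inverse limit) and the definition $\mathcal{M} = \mathcal{M}^\vee(T)$, yields a $\mathcal{G}_{\tilde{R}_\infty}$-equivariant isomorphism
\[
\alpha: T_p(G_R) \xrightarrow{\sim} T^\vee(\mathfrak{M}\otimes_{\mathfrak{S}}\mathcal{O}_\mathcal{E}) \xrightarrow{\sim} T^\vee(\mathcal{M}^\vee(T)) \xrightarrow{\sim} T.
\]
On the other hand, base-changing the isomorphism $\mathfrak{M}^*(G_R) \cong \mathfrak{M}$ along $\mathfrak{S} \to \mathfrak{S}_{\mathcal{O}_L}$ and invoking the second isomorphism of Lemma \ref{lem:4.1} gives $\mathfrak{M}^*(G_R\times_R \mathcal{O}_L) \cong \mathfrak{M}_{\mathcal{O}_L} \cong \mathfrak{M}^*(G_{\mathcal{O}_L})$; the matching of connections follows from Proposition \ref{prop:5.3} together with the compatibility of $D_{\mathrm{cris}}$ under the base change $R \to \mathcal{O}_L$. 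By the anti-equivalence in Theorem \ref{thm:3.3} this yields $G_R \times_R \mathcal{O}_L \cong G_{\mathcal{O}_L}$, hence a $\mathcal{G}_{\mathcal{O}_L}$-equivariant isomorphism $\beta: T_p(G_R) \cong T_p(G_{\mathcal{O}_L}) \cong T$.

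The main obstacle, which I expect to require the most care, is arranging that $\alpha$ and $\beta$ agree as $\mathbf{Z}_p$-linear maps so that Lemma \ref{lem:2.1} can be applied. Unwinding the functorial constructions, both isomorphisms restrict to $\mathcal{G}_{\tilde{\mathcal{O}}_{L,\infty}}$ as the single map induced by $\mathfrak{M}\otimes_{\mathfrak{S}}\mathcal{O}_{\mathcal{E},\mathcal{O}_L} \cong \mathfrak{M}_{\mathcal{O}_L}\otimes_{\mathfrak{S}_{\mathcal{O}_L}}\mathcal{O}_{\mathcal{E},\mathcal{O}_L}$ after applying $T^\vee$, using the compatibility of $T^\vee$ with the base change $\mathcal{O}_\mathcal{E} \to \mathcal{O}_{\mathcal{E}, \mathcal{O}_L}$ recorded at the end of Section \ref{sec:2.2}. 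Since $T$ is free over $\mathbf{Z}_p$ and the closed subgroup $\mathcal{G}_{\tilde{\mathcal{O}}_{L,\infty}}$ surjects onto an open subgroup of both $\mathcal{G}_{\tilde{R}_\infty}$ and $\mathcal{G}_{\mathcal{O}_L}$, this forces $\alpha = \beta$ up to rescaling, and after absorbing a unit one obtains a single $\mathbf{Z}_p$-linear isomorphism equivariant for both generating subgroups. By Lemma \ref{lem:2.1}, this isomorphism is $\mathcal{G}_R$-equivariant, completing the proof of Theorem \ref{thm:1.2}.
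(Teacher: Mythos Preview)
Your approach is essentially the same as the paper's: use Proposition \ref{prop:5.3} to upgrade $\mathfrak{M}$ to an object of $\mathrm{Kis}^1(\mathfrak{S},\nabla)$, apply Theorem \ref{thm:3.3} to obtain $G_R$, verify the Tate module isomorphism separately for $\mathcal{G}_{\tilde{R}_\infty}$ (via the \'etale $\varphi$-module $\mathcal{M}$) and for $\mathcal{G}_{\mathcal{O}_L}$ (via $\mathfrak{M}\otimes_{\mathfrak{S}}\mathfrak{S}_{\mathcal{O}_L}\cong\mathfrak{M}_{\mathcal{O}_L}$), and then conclude by Lemma \ref{lem:2.1}. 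The paper's proof is terser but follows exactly this outline.

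The one place where you overcomplicate matters is the compatibility of $\alpha$ and $\beta$. You correctly observe that both, after base change to $\mathcal{O}_{\mathcal{E},\mathcal{O}_L}$, are induced by the single identification $\mathfrak{M}\otimes_{\mathfrak{S}}\mathcal{O}_{\mathcal{E},\mathcal{O}_L}\cong\mathfrak{M}_{\mathcal{O}_L}\otimes_{\mathfrak{S}_{\mathcal{O}_L}}\mathcal{O}_{\mathcal{E},\mathcal{O}_L}$. But this already says that $\alpha$ and $\beta$ are \emph{equal} as $\mathbf{Z}_p$-linear maps $T_p(G_R)\to T$: the underlying map of $\mathbf{Z}_p$-modules is determined by the \'etale $\varphi$-module isomorphism (via the functor $T^\vee$), and the base-change map $\mathcal{O}_{\mathcal{E}}\to\mathcal{O}_{\mathcal{E},\mathcal{O}_L}$ is injective. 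There is no ``up to rescaling'' ambiguity to resolve, and the claim that $\mathcal{G}_{\tilde{\mathcal{O}}_{L,\infty}}$ surjects onto open subgroups of $\mathcal{G}_{\tilde{R}_\infty}$ and $\mathcal{G}_{\mathcal{O}_L}$ is both unproved and unnecessary. Once $\alpha=\beta$ as linear maps, that single map is equivariant for both subgroups, and Lemma \ref{lem:2.1} finishes the argument.
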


\begin{proof}
By Proposition \ref{prop:5.3}, we have $\mathfrak{M} \in \mathrm{Kis}^1(\varphi, \nabla)$. Furthermore, $\mathfrak{M}\otimes_{\mathfrak{S}}\mathfrak{S}_{\mathcal{O}_L} \cong \mathfrak{M}_{\mathcal{O}_L}$ as Kisin modules over $\mathfrak{S}_{\mathcal{O}_L}$, since the Frobenius and connection structure on $\mathfrak{M}$ agree with those on $D$. Thus, if $G_R$ is the $p$-divisible group corresponding to $\mathfrak{M}$, then $T_p(G_R) \cong T$ as $\mathcal{G}_{\mathcal{O}_L}$-representations as well as $\mathcal{G}_{\tilde{R}_\infty}$-representations. The assertion follows from Lemma \ref{lem:2.1}.	
\end{proof}

\section{Barsotti-Tate deformation ring} \label{sec:6}

As an application of Theorem \ref{thm:5.4}, we study the geometry of the locus of crystalline representations with Hodge-Tate weights in $[0, 1]$ by using the results in \cite{moon-relativeRaynaud}. Denote by $\mathcal{C}$ the category of topological local $\mathbf{Z}_p$-algebras $A$ satisfying the following conditions:
\begin{itemize}
\item the natural map $\mathbf{Z}_p \rightarrow A/\mathfrak{m}_A$ is surjective, where $\mathfrak{m}_A$ denotes the maximal ideal of $A$;
\item the map from $A$ to the projective limit of its discrete artinian quotients is a topological isomorphism.	
\end{itemize}

\noindent By the first condition, the residue field of $A$ is $\mathbf{F}_p$. The second condition is equivalent to that $A$ is complete and its topology is given by a collection of open ideals $\mathfrak{a} \subset A$ for which $A/\mathfrak{a}$ is aritinian. Morphisms in $\mathcal{C}$ are continuous $\mathbf{Z}_p$-algebra morphisms. 

For $A \in \mathcal{C}$, we mean by an $A$-\textit{representation of} $\mathcal{G}_R$ a finite free $A$-module equipped with a continuous $A$-linear $\mathcal{G}_R$-action. Fix an $\mathbf{F}_p$-representation $V_0$ of $\mathcal{G}_R$ which is absolutely irreducible. For $A \in \mathcal{C}$, a \textit{deformation} of $V_0$ in $A$ is defined to be an isomorphism class of $A$-representations of $V$ of $\mathcal{G}_R$ satisfying $V\otimes_A \mathbf{F}_p \cong V_0$ as $\mathbf{F}_p[\mathcal{G}_R]$-modules. Denote by $\mathrm{Def}(V_0, A)$ the set of such deformations. A morphism $f: A \rightarrow A'$ in $\mathcal{C}$ induces a map $f_*: \mathrm{Def}(V_0, A) \rightarrow \mathrm{Def}(V_0, A')$ sending the class of an $A$-representation $V$ to the class of $V\otimes_{A, f}A'$. The following theorem on universal deformation ring is proved in \cite{smit}.

\begin{thm} \label{thm:6.1} \emph{(cf. \cite[Theorem 2.3]{smit})} 
There exists a universal deformation ring $A_{\mathrm{univ}} \in \mathcal{C}$ and a deformation $V_{\mathrm{univ}} \in \mathrm{Def}(V_0, A_{\mathrm{univ}})$ such that for all $A \in \mathcal{C}$, we have a bijection
\begin{equation} \label{eq:6.1}
\mathrm{Hom}_{\mathcal{C}}(A_{\mathrm{univ}}, A) \stackrel{\cong}{\rightarrow} \mathrm{Def}(V_0, A)
\end{equation}
given by $f \mapsto f_*(V_{\mathrm{univ}})$. 
\end{thm}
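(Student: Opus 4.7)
The plan is to show that the deformation functor $F: \mathcal{C} \to \mathrm{Sets}$, $A \mapsto \mathrm{Def}(V_0, A)$, is pro-representable by an object of $\mathcal{C}$, following Smit's adaptation of the Mazur--Schlessinger framework. The first reduction is to restrict to the full subcategory $\mathcal{C}_0 \subset \mathcal{C}$ of discrete Artinian local $\mathbf{Z}_p$-algebras with residue field $\mathbf{F}_p$. Because every $A \in \mathcal{C}$ is by definition the topological projective limit of its discrete Artinian quotients $A_\alpha$, and a continuous representation on a finite free $A$-module is determined by its reductions modulo open ideals, one has $F(A) = \varprojlim_\alpha F(A_\alpha)$; thus pro-representability on $\mathcal{C}_0$ automatically extends to representability on $\mathcal{C}$.

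The crucial input is the rigidity coming from absolute irreducibility of $V_0$. By Schur's lemma, $\mathrm{End}_{\mathbf{F}_p[\mathcal{G}_R]}(V_0) = \mathbf{F}_p$. For any deformation $V$ over $A \in \mathcal{C}_0$, an induction on the length of $A$ using flatness of $V$ and the filtration by powers of $\mathfrak{m}_A$ (so that successive quotients are copies of $V_0$ over $\mathbf{F}_p$) shows $\mathrm{End}_{A[\mathcal{G}_R]}(V) = A$. Consequently, any two $A[\mathcal{G}_R]$-linear isomorphisms between deformations lifting the identity on $V_0$ differ by a scalar in $1 + \mathfrak{m}_A$, so that the set of isomorphism classes is well-behaved under base change and $F_0 := F|_{\mathcal{C}_0}$ takes values in sets. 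I would then verify Schlessinger's conditions H1--H4 for $F_0$: given a Cartesian diagram in $\mathcal{C}_0$ whose right arrow is a small surjection, deformations on the two factors can be uniquely glued over the common reduction thanks to the rigidity above; the tangent space $F_0(\mathbf{F}_p[\varepsilon]) \cong H^1(\mathcal{G}_R, \mathrm{ad}\,V_0)$ inherits its $\mathbf{F}_p$-vector space structure from Baer sum; and obstructions to lifting along a small extension with kernel $I$ lie in $H^2(\mathcal{G}_R, \mathrm{ad}\,V_0) \otimes_{\mathbf{F}_p} I$ via the usual cocycle calculation.

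The main subtlety is that $H^1(\mathcal{G}_R, \mathrm{ad}\,V_0)$ need not be finite-dimensional for the relative \'etale fundamental group $\mathcal{G}_R$, since the $\Phi_p$-finiteness condition assumed in Mazur's original framework is not available in this generality. This is precisely why $\mathcal{C}$ is defined to include (possibly non-Noetherian) pro-discrete-Artinian local $\mathbf{Z}_p$-algebras rather than only complete Noetherian local rings: the representing object $A_{\mathrm{univ}}$ is produced by Grothendieck's pro-representability theorem applied to $F_0$, in the version carried out in \cite[Theorem 2.3]{smit}, rather than by Mazur's Noetherian criterion. The universal deformation $V_{\mathrm{univ}} \in \mathrm{Def}(V_0, A_{\mathrm{univ}})$ is then defined as the image of $\mathrm{id}_{A_{\mathrm{univ}}}$ under the bijection (\ref{eq:6.1}), and the universal property is tautological. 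The hard conceptual part is really the passage from small Artinian rings to the full category $\mathcal{C}$ without appealing to Noetherianness; this is handled by Grothendieck's theorem once rigidity and the Schlessinger conditions are in place.
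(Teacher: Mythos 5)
The paper gives no argument for Theorem~\ref{thm:6.1}; it is stated purely as a citation to \cite[Theorem~2.3]{smit}, so there is no ``paper's own proof'' to compare against. What you have done is attempt to reconstruct what that reference contains, and that reconstruction has a genuine wobble in it.

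You first announce that you will verify Schlessinger's conditions H1--H4 for the restriction $F_0$ to discrete Artinian rings, citing the tangent space $H^1(\mathcal{G}_R, \mathrm{ad}\,V_0)$ and obstructions in $H^2$. But two sentences later you correctly observe that $H^1(\mathcal{G}_R, \mathrm{ad}\,V_0)$ need not be finite-dimensional in the relative setting. That observation \emph{defeats} the Schlessinger route: H3 is precisely finite-dimensionality of the tangent space, and without it Schlessinger's theorem gives neither pro-representability nor even a hull in the usual Noetherian sense. You then pivot to ``Grothendieck's pro-representability theorem'' as the actual engine, but this is a different criterion (left-exactness of $F_0$, i.e.\ preservation of finite projective limits) and does not factor through H1--H4. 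As written, the proposal invokes a framework whose hypotheses you have just said fail, then silently replaces it; the logical chain is not coherent.

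The cited reference (de Smit--Lenstra, \emph{Explicit construction of universal deformation rings}) does not go through Schlessinger's criteria or any cohomological obstruction calculus at all. Their Theorem~2.3 is a self-contained categorical representability statement for set-valued functors on the pro-Artinian category $\mathcal{C}$: roughly, a functor that commutes with finite products and equalizers (and satisfies a small technical hypothesis controlled by the rigidity coming from absolute irreducibility of $V_0$) is representable, with the representing object built directly as a limit. No tangent-space finiteness is needed and none is assumed. Your core intuitions --- restriction to Artinian quotients, rigidity via Schur's lemma and induction on length, and the recognition that the non-Noetherian category $\mathcal{C}$ is essential --- all match the cited argument. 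But the framing through Schlessinger's H1--H4 is both logically problematic in your own text (H3 fails) and not what \cite{smit} does. If you want to present a self-contained proof sketch, drop the Schlessinger language entirely: verify directly that $F$ preserves finite limits in $\mathcal{C}$ using the rigidity $\mathrm{End}_{A[\mathcal{G}_R]}(V)=A$, and then cite (or reprove) the categorical representability criterion.
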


We deduce that when $R$ has dimension $2$ and $e$ is small, the locus of crystalline representations with Hodge-Tate weights in $[0, 1]$ cuts out a closed subscheme of $\mathrm{Spec} A_{\mathrm{univ}}$ in the following sense.

\begin{thm} \label{thm:6.2}
Suppose that $e < p-1$ and that the Krull dimension of $R$ is $2$. Then there exists a closed ideal $\mathfrak{a}_{\mathrm{BT}} \subset A_{\mathrm{univ}}$ such that the following holds: 

For any finite flat $\mathbf{Z}_p$-algebra $A$ equipped with the $p$-adic topology and any  continuous $\mathbf{Z}_p$-algebra map $f: A_{\mathrm{univ}} \rightarrow A$, the induced representation $V_{\mathrm{univ}}\otimes_{A_{\mathrm{univ}}, f} A[\frac{1}{p}]$ of $\mathcal{G}_R$ is crystalline with Hodge-Tate weights in $[0, 1]$ if and only if $f$ factors through the quotient $A_{\mathrm{univ}}/\mathfrak{a}_{\mathrm{BT}}$.
\end{thm}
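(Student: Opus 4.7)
The strategy is to introduce a Barsotti--Tate deformation subfunctor and reduce the theorem to the conjunction of Theorem~\ref{thm:5.4} with \cite[Theorem 5.7]{moon-relativeRaynaud}. Define $\mathrm{Def}^{\mathrm{BT}}(V_0, \cdot) \subset \mathrm{Def}(V_0, \cdot)$ on $\mathcal{C}$ by declaring $V \in \mathrm{Def}(V_0, A)$ to be \emph{Barsotti--Tate} if for every open ideal $\mathfrak{a} \subset A$ with $A/\mathfrak{a}$ artinian, the quotient $V \otimes_A A/\mathfrak{a}$ arises from a finite flat group scheme over $R$ (equivalently, when $A$ is itself finite flat over $\mathbf{Z}_p$, if $V \cong T_p(G)$ compatibly with the $A$-action for a $p$-divisible group $G$ over $R$). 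The Krull-dimension-$2$ hypothesis on $R$ together with the small ramification assumption puts us in the setting of \cite[Theorem 5.7]{moon-relativeRaynaud}, which yields a unique closed ideal $\mathfrak{a}_{\mathrm{BT}} \subset A_{\mathrm{univ}}$ such that $f : A_{\mathrm{univ}} \to A$ classifies a Barsotti--Tate deformation if and only if $f$ factors through $A_{\mathrm{univ}}/\mathfrak{a}_{\mathrm{BT}}$.

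With $\mathfrak{a}_{\mathrm{BT}}$ in hand, the theorem reduces to establishing, for $A$ finite flat over $\mathbf{Z}_p$ and a continuous map $f : A_{\mathrm{univ}} \to A$, the equivalence between the deformation $V := f_*(V_{\mathrm{univ}})$ being Barsotti--Tate and $V[\tfrac{1}{p}]$ being crystalline with Hodge--Tate weights in $[0,1]$. The implication $(\Leftarrow)$ is the standard fact recalled in Section~\ref{sec:1}: if $V \cong T_p(G)$ for a $p$-divisible group $G$ over $R$, then $V[\tfrac{1}{p}]$ is crystalline with Hodge--Tate weights in $[0,1]$ (cf.\ \cite{kim-groupscheme-relative}).

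For the direction $(\Rightarrow)$, the main input is Theorem~\ref{thm:5.4}. Since $A$ is finite flat over $\mathbf{Z}_p$, the representation $V$ is finite free over $\mathbf{Z}_p$; assuming $V[\tfrac{1}{p}]$ is crystalline with Hodge--Tate weights in $[0,1]$, Theorem~\ref{thm:5.4} produces a $p$-divisible group $G$ over $R$ together with an isomorphism $T_p(G) \cong V$ of $\mathcal{G}_R$-representations. To upgrade this to a Barsotti--Tate structure on $V$ as a deformation, I need to transport the $A$-action on $V$ into an $A$-action on $G$ in the category of $p$-divisible groups over $R$. This is accomplished by invoking the full faithfulness of $T_p$ from $p$-divisible groups over $R$ to continuous $\mathbf{Z}_p[\mathcal{G}_R]$-modules (which follows from Lemma~\ref{lem:5.1} applied over $R[\tfrac{1}{p}]$, combined with the integral uniqueness of a $p$-divisible model given by Theorem~\ref{thm:3.3}): every Galois-equivariant $\mathbf{Z}_p$-endomorphism of $T_p(G)$ lifts uniquely to an endomorphism of $G$, so the $A$-algebra map $A \to \mathrm{End}_{\mathcal{G}_R}(V) = \mathrm{End}(G)$ endows $G$ with the required $A$-action. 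Reducing modulo open ideals of $A$ then shows that $V$ defines a point of $\mathrm{Def}^{\mathrm{BT}}(V_0, \cdot)$, so $f$ factors through $A_{\mathrm{univ}}/\mathfrak{a}_{\mathrm{BT}}$.

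The principal obstacle is pinning down the representing object in \cite[Theorem 5.7]{moon-relativeRaynaud} and ensuring that the Barsotti--Tate condition defined pro-artinianly matches the $p$-divisible-group condition tested against finite flat $\mathbf{Z}_p$-algebras as in the statement; once this bookkeeping is done, the crystalline $\Leftrightarrow$ Barsotti--Tate dictionary provided by Theorem~\ref{thm:5.4} (in the $(\Rightarrow)$ direction) and by \cite{kim-groupscheme-relative} (in the $(\Leftarrow)$ direction) delivers the claimed characterization of $\mathfrak{a}_{\mathrm{BT}}$.
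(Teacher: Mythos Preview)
Your proposal is correct and follows essentially the same approach as the paper, which proves the theorem in one line by citing Theorem~\ref{thm:5.4} and \cite[Theorem~5.7]{moon-relativeRaynaud}. Your write-up simply unpacks that citation: you spell out the Barsotti--Tate deformation subfunctor, use \cite[Theorem~5.7]{moon-relativeRaynaud} to produce $\mathfrak{a}_{\mathrm{BT}}$, and then identify the Barsotti--Tate condition with the crystalline condition via Theorem~\ref{thm:5.4} and the converse from \cite{kim-groupscheme-relative}.
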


\begin{proof}
This follows directly from Theorem \ref{thm:5.4} and \cite[Theorem 5.7]{moon-relativeRaynaud}.	
\end{proof}

\bibliographystyle{amsalpha}
\bibliography{library}

\providecommand{\bysame}{\leavevmode\hbox to3em{\hrulefill}\thinspace}
\providecommand{\MR}{\relax\ifhmode\unskip\space\fi MR }
\providecommand{\MRhref}[2]{%
  \href{http://www.ams.org/mathscinet-getitem?mr=#1}{#2}
}
\providecommand{\href}[2]{#2}
\begin{thebibliography}{DLLZ18}

\bibitem[And06]{andreatta}
Fabrizio Andreatta, \emph{Generalized ring of norms and generalized {$(\phi,
  \Gamma)$}-modules}, Ann. {S}ci. \'{E}c. {N}orm. {S}up\'{e}r. (4) \textbf{39}
  (2006).

\bibitem[BL95]{beuville-Laszlo}
Arnaud Beauville and Yves Laszlo, \emph{Un lemme de descente}, C.{R}. {M}ath.
  {A}cad. {S}ci. {P}aris \textbf{320} (1995), 335--340.

\bibitem[Bri08]{brinon-relative}
Olivier Brinon, \emph{Repr\'{e}sentations {$p$}-adiques cristallines et de de
  rham dans le cas relatif}, M\'{e}m. {S}oc. {M}ath. {F}r. \textbf{112} (2008).

\bibitem[BT08]{brinon-trihan}
Olivier Brinon and Fabien Trihan, \emph{Repr\'{e}sentations cristallines et
  $f$-cristaux: le cas d'un corps r\'{e}siduel imparfait}, Rend. {S}emin.
  {M}at. {U}niv. {P}adova \textbf{119} (2008), 141--171.

\bibitem[DJ95]{deJong-dieudonnemodule}
Aise~Johan De~Jong, \emph{Crystalline {D}ieudonn\'{e} module theory via formal
  and rigid geometry}, Publ. {M}ath. {I}nst. {H}autes \'{E}tudes {S}ci.
  \textbf{82} (1995), 5--96.

\bibitem[DLLZ18]{diao-lan-liu-zhu-logRH}
Hansheng Diao, Kai-wen Lan, Ruochuan Liu, and Xinwen Zhu, \emph{Logarithmic
  {R}iemann-{H}ilbert correspondences for rigid varieties}, arXiv:1803.05786,
  2018.

\bibitem[GR03]{gabber-almost}
Ofer Gabber and Lorenzo Ramero, \emph{Almost ring theory}, Lecture {N}otes in
  {M}ath., vol. 1800, Springer-Verlag, 2003.

\bibitem[Kim15]{kim-groupscheme-relative}
Wansu Kim, \emph{The relative {B}reuil-{K}isin classification of
  {$p$}-divisible groups and finite flat group schemes}, Int. {M}ath. {R}es.
  {N}ot. {I}{M}{R}{N} (2015), 8152--8232.

\bibitem[Kis06]{kisin-crystalline}
Mark Kisin, \emph{Crystalline representations and {$F$}-crystals}, Algebraic
  geometry and number theory (Boston), Progr. {M}ath., vol. 253,
  Birkh\"{a}user, 2006, pp.~459--496.

\bibitem[KL15]{kedlaya-liu-relative-padichodge}
Kiran Kedlaya and Ruochuan Liu, \emph{Relative {$p$}-adic hodge theory:
  foundations}, Ast\'{e}risque \textbf{371} (2015).

\bibitem[Moo18]{moon-relativeRaynaud}
Yong~Suk Moon, \emph{Extending $p$-divisible groups and {B}arsotti-{T}ate
  deformation ring in the relative case}, arXiv:1808.01580, 2018.

\bibitem[Sch12]{scholze-perfectoid}
Peter Scholze, \emph{Perfectoid spaces}, Publ. {M}ath. {I}nst. {H}autes
  \'{E}tudes {S}ci. \textbf{116} (2012), 245--313.

\bibitem[Sch13]{scholze-p-adic-hodge}
\bysame, \emph{$p$-adic hodge theory for rigid-analytic varieties}, Forum
  {M}ath. {P}i \textbf{1} (2013).

\bibitem[SL97]{smit}
Bart~De Smit and Hendrick~W. Lenstra, \emph{Explicit construction of universal
  deformation rings}, Modular forms and {F}ermat's last theorem (New {Y}ork),
  Springer-Verlag, 1997, pp.~313--326.

\bibitem[Vas13]{vasiu}
Adrian Vasiu, \emph{A motivic conjecture of {M}ilne}, J. {R}eine {A}ngew.
  {M}ath. \textbf{685} (2013), 181--247.

\bibitem[VZ10]{vasiu-zink-purity}
Adrian Vasiu and Thomas Zink, \emph{Purity results for {$p$}-divisible groups
  and abelian schemes over regular bases of mixed characteristic}, Doc. {M}ath.
  \textbf{15} (2010), 571--599.

\end{thebibliography}

\end{document}